\documentclass[11pt]{article}

\usepackage[margin=1in]{geometry}
\usepackage{amsmath,amssymb,amsthm,mathrsfs,mathtools}
\usepackage[T1]{fontenc}
\usepackage{lmodern}
\usepackage{hyperref}
\usepackage{enumitem}
\usepackage{microtype}

\hypersetup{colorlinks=true, linkcolor=blue, citecolor=blue, urlcolor=blue}
\usepackage[
backend=biber,
style=alphabetic,
giveninits=true,
url = false,
doi = false,
isbn =false,
maxbibnames=99,
maxalphanames=99,
]{biblatex}
\renewbibmacro{in:}{}
\DeclareFieldFormat{title}{#1} 
\DeclareFieldFormat[article]{title}{#1}
\DeclareFieldFormat[inbook]{title}{#1}
\DeclareFieldFormat[incollection]{title}{#1}
\numberwithin{equation}{section}

\newtheorem{theorem}{Theorem}[section]
\newtheorem{assumption}[theorem]{Assumption}
\newtheorem{lemma}[theorem]{Lemma}
\newtheorem{proposition}[theorem]{Proposition}

\newtheorem{definition}[theorem]{Definition}

\newcommand{\dif}{d}

\newcommand{\R}{\mathbb{R}}
\newcommand{\E}{\mathbb{E}}
\newcommand{\Pbb}{\mathbb{P}}
\newcommand{\Var}{\operatorname{Var}}

\newcommand{\Tr}{\operatorname{Tr}}
\newcommand{\1}{\mathbf{1}}
\newcommand{\scdens}{\rho_{\mathrm{sc}}}
\newcommand{\ip}[2]{\langle #1,#2\rangle}
\newcommand{\cO}{\mathcal{O}}
\DeclareMathOperator{\supp}{supp}
\DeclareMathOperator{\sgn}{sgn}
\DeclareMathOperator{\pv}{p.v.}

\date{\today}

\begin{document}

\begin{table}
\centering

\begin{tabular}{c}

\multicolumn{1}{c}{\parbox{12cm}{\begin{center}\Large{\bf Sobolev convergence of log-determinants for smooth Wigner matrices}\end{center}}}\\
\\
\end{tabular}
\begin{tabular}{ c c c  }
Giorgio Cipolloni
& \phantom{blah} & %\qquad
Patrick Lopatto
 \\
 & & \\  
 \small{University of Rome Tor Vergata} & & \small{University of North Carolina at Chapel Hill} \\
 \small{Department of Mathematics} & & \small{Department of Statistics and Operations Research} \\
 \small{\texttt{cipolloni@axp.mat.uniroma2.it}} & & \small{\texttt{lopatto@unc.edu}} \\
  & & \\
\end{tabular}
\\
\begin{tabular}{c}
\multicolumn{1}{c}{\today}\\
\\
\end{tabular}

\begin{tabular}{p{15 cm}}
\small{{\bf Abstract:} We show that the fields emerging from the log-determinant and the eigenvalue counting function of smooth Wigner matrices converge in law to centered Gaussian, logarithmically correlated, random elements in every
negative Sobolev space $H^{-s}$.
}
\end{tabular}
\end{table}

\section{Introduction}

We consider Wigner matrices, which are $N\times N$ symmetric matrices $H_N=H_N^\top$ with independent 
upper-triangular  entries normalized so that $\E (H_N)_{ij}=0$, $\E (H_N)_{ij}^2=N^{-1}$ for $i\neq j$, and $\E (H_N)_{ii}^2=2N^{-1}$ for all $i$. It is well known  that the (random) empirical measure of the eigenvalues $\lambda_1(H_N),\dots,\lambda_N(H_N)$ of $H_N$ converges to Wigner's semicircular law, $\mu_{\mathrm{sc}}(\dif x)=\frac{1}{2\pi}\sqrt{(4-x^2)_+}\,\dif x$ \cite{wigner1958distribution}. It is then natural to study the fluctuations around this deterministic limit. From the breakthrough work \cite{Johansson04} (see also \cite{LP09}), it is known that the fluctuations around $\mu_{\mathrm{sc}}$ are Gaussian and appear on a surprisingly small scale, $N^{-1}$, instead of the usual $N^{-1/2}$ scale typical of central limit theorems. These results show that
\begin{equation}
\label{eq:CLT}
\sum_i f(\lambda_i)-\E \sum_i f(\lambda_i)\Longrightarrow \mathcal{N}(0,V_f)
\end{equation}
for sufficiently regular test functions $f$, where $\mathcal{N}(0,V_f)$ is a Gaussian random variable and the variance $V_f>0$ is explicit. Starting from \cite{Johansson04, LP09}, there has been great effort directed toward showing that the convergence in \eqref{eq:CLT} holds for more and more general classes of test functions \cite{bao2016clt, Sh11, SW13} (see also previous works \cite{girko2012theory, sinai1998central} for polynomial test functions). This series of works culminated in a recent result of Landon and Sosoe \cite{LS22}, where they proved that \eqref{eq:CLT} holds for the (almost) optimal class of test functions $f\in H^{1/2+\varepsilon}(I)$, for any bulk interval $I \Subset (-2,2)$.

It is natural to ask whether the results above apply to the  observable $\log|\det(H_N-E)|$, which has recently attracted lots of attention (see e.g.\ \cite{augeri2025maximum, bourgade2025optimal, lambert2019law}) in connection with the Fyodorov--Hiary--Keating conjecture \cite{fyodorov2012freezing} (see also \cite{fyodorov2016distribution}). The log-determinant $\log|\det(H_N-E)|$ can be realized as a linear eigenvalue statistic for the choice of test function $f_E(\cdot)=\log |\cdot-E|$, which is however not in $H^{1/2}$. Hence, the above results do not immediately apply. Inspired by analogous results for Haar unitary matrices \cite{hughes2001characteristic},  Fyodorov, Khoruzhenko, and Simm considered the centered log-determinant field
\[
L_N(E)=\log |\det(H_N-E)|-\E \log |\det(H_N-E)|,
\]
when $H_N$ is a GUE matrix  \cite{fyodorov2016fractional}.\footnote{We say that a matrix $H_N$ belongs to the Gaussian Unitary Ensemble (GUE) if the strictly upper-triangular entries of $\sqrt{N}H_N$ are complex standard Gaussians and the diagonal ones are standard real Gaussians.} In this context,  $L_N(E)$ is naturally viewed as a random distribution on $I$. Relying on the integrable structure of the joint distribution of GUE eigenvalues, they showed that $L_N(E)$ converges in the Sobolev space $H^{-s}$, for any $s>1/2$, to the random Chebyshev--Fourier series
\[
\sum_{n\ge 1}\frac{a_n}{\sqrt{n}}T_n(E),
\]
where $a_n$ are i.i.d.\ standard real Gaussians and $T_n(E)$ are the Chebyshev polynomials. Note that the limiting field is logarithmically correlated, 
as $\sum_n\frac{2}{n}T_n(E)T_n(E')=-\log(2|E-E'|)$ (as shown below in Section~\ref{s:logfieldcovariance}).

We  study the field $L_N$ for  Wigner matrices  with smooth entry distributions and prove a similar 
convergence result for any $s>0$.  
In addition, we also consider a similar question for the centered counting field
\[
\mathfrak{n}_N(E)=\#\{j:\lambda_j(H_N)\le E\}-\E \#\{j:\lambda_j(H_N)\le E\},
\]
which is again viewed as a random distribution on $I$. 
More precisely, we show that, after testing against smooth test functions on compact intervals $I \Subset (-2,2)$, both of these fields converge in law to centered Gaussian log-correlated random elements in every negative Sobolev space $H^{-s}(I)$, with explicit, logarithmically correlated covariances.

Our proof relies on two main ingredients. First, we prove convergence of the random fields tested against any element of a dense set of functions by rewriting the fields as centered linear statistics and invoking the bulk $H^{1/2+\varepsilon}(I)$ central limit theorem of Landon and Sosoe \cite{LS22}, together with the classical smooth linear-statistics CLT of Lytova--Pastur \cite{LP09} and Shcherbina \cite{Sh11}. Second, we prove a uniform bound on the Sobolev norms $\E\|L_N\|_{H^{-r}(I)}^2$ and $\E\|\mathfrak{n}_N\|_{H^{-r}(I)}^2$ for every $r>0$.  Combining these bounds with the compact embedding $H^{-r}(I)\hookrightarrow H^{-s}(I)$, we obtain tightness in every $H^{-s}(I)$, and conclude Sobolev convergence.

\subsection*{Acknowledgements} The research of G. C. is supported by the Italian Ministry of University and Research (MUR) - Fondo Italiano per la Scienza (FIS3) - 2024 Call, project UBLOCO, CUP F53C25000940001, and also partially supported by the MUR Excellence Department Project MatMod@TOV awarded to the Department of Mathematics, University of Rome Tor Vergata, CUP E83C23000330006. Additionally, G.C. thanks INdAM (Istituto Nazionale di Alta Matematica ``Francesco Severi'') and the group GNFM.
P. L. was partially supported by NSF grant DMS-2450004. 
This manuscript was written with the assistance of large language models, including aid in drafting, revising, and proofreading. We also thank Peter Forrester for bringing to our attention \cite[Exercises 1.4 q.4]{forrester2010log}.

\subsection*{Notations and conventions} 
For functions $f\colon \R \rightarrow \R$, we use the Fourier transform convention
\[
    \widehat f(\xi)
    =
    \int_{\mathbb R} e^{-i y \xi} f(y)\,\dif y.
\]
For all $s>0$ we define the Sobolev space
\[
H^s(\R)=\left\{u\in L^2(\R): \lVert u\rVert_{H^s(\R)} <\infty\right\},\qquad 
\|u\|_{H^s(\mathbb R)}^2
=
\int_{\mathbb R}(1+|\xi|^2)^s |\widehat u(\xi)|^2\,d\xi. 
\]

Fix a compact interval $I=[a,b]\Subset (-2,2)$, 
and write $L=b-a$. Let $(e_k,\mu_k)_{k\ge1}$ be the Dirichlet eigenbasis of $-\partial_x^2$ on $I$:
\begin{equation}\label{eq:dirichlet-basis}
e_k(E)=\sqrt{\frac{2}{L}} \sin\left(\frac{\pi k(E-a)}{L}\right),
\qquad
\mu_k=\left(\frac{\pi k}{L}\right)^2.
\end{equation}
For $s>0$ and for $u\in L^2(I)$, we set
\[
\|u\|_{H^{-s}(I)}^2
=
\sum_{k\ge1}(1+\mu_k)^{-s} \ip{u}{e_k}_{L^2(I)}^2,
\]
and define $H^{-s}(I)$ as the Hilbert space obtained as the completion of $L^2(I)$ with respect to this norm. 
We write $H_0^s(I)$  for the dual of $H^{-s}(I)$, which can be defined explicitly as 
\[
H_0^s(I)
=
\left\{
h=\sum_{k\ge1}h_k e_k:
\sum_{k\ge1}(1+\mu_k)^s |h_k|^2<\infty
\right\}.
\]  
Throughout, \(C_c^\infty(I)\) means \(C_c^\infty((a,b))\), viewed as
functions on \(I=[a,b]\) by restriction.

\section{Main Results}\label{s:main}

We start with the definition of smooth probability density. 
\begin{definition}[Wigner-smooth densities]\label{def:wigner-smooth-density}
A probability density $h$ on $\mathbb R$ is called Wigner-smooth if $h$ is strictly positive, $h\in C^\infty(\mathbb R)$,  there exists $\delta>0$ such that 
\begin{equation}\label{eq:wigner-smooth-tail}
\int_{\R} e^{\delta |x|^2} h(x)\,dx < \infty,
\end{equation}
and for every integer $k\ge0$ there exists $C_k>0$ such that
\begin{equation}\label{eq:wigner-smooth-derivatives}
\left| \frac{d^k}{dx^k}\log h(x)\right| \le C_k (1+|x|)^{C_k}, \qquad x\in\R.
\end{equation}
\end{definition}

We consider $N\times N$ real symmetric\footnote{We assume that the entries of $H_N$ are real for notational simplicity. Our arguments and results hold in the complex Hermitian  case as well, after making appropriate changes to certain constant factors.}  random matrices $H_N$ satisfying the following assumption. 
\begin{assumption}
\label{assumption:hn}
The matrix entries $(H_{ij})_{1\le i,j\le N}$ of $H_N$ are independent (up to  symmetry) and satisfy $H_{ij}\stackrel{\dif}{=}N^{-1/2}\chi_{\mathrm{od}}$ for $i<j$, and $H_{ii}\stackrel{\dif}{=}N^{-1/2}\chi_{\dif}$ for all $i$, with $\chi_{\dif}, \chi_{\mathrm{od}}$ being $N$-independent smooth real random variables (in the sense of Definition~\ref{def:wigner-smooth-density}) with $\E\chi_{\mathrm{od}}=\E\chi_{\dif}=0$ and $\E|\chi_{\mathrm{od}}|^2=1$, $\E|\chi_{\dif}|^2=2$. Additionally, let $s_k(X)$ denote the $k$th cumulant of a real random variable $X$. We also assume\footnote{We stress that inspecting our proof it is clear that the assumptions $\E|\chi_{\dif}|^2=2$ and \eqref{eq:wigner-cumulant} can easily be weakened, however we refrain from doing so for the sake of clarity, since for certain steps of the proof 
we refer to \cite{LS22} which uses these additional assumptions for convenience (see \cite[Definition~1.1]{LS22} and the following comments).}
\begin{equation}\label{eq:wigner-cumulant}
s_k(\chi_{\dif})=2^{k-1}s_k(\chi_{\mathrm{od}}), \qquad 3\le k\le4,
\end{equation}
and that for every integer $k\ge1$ there exists $C_k>0$ such that
\begin{equation}\label{eq:wigner-moments}
\E \big[|\chi_{\dif}|^k+|\chi_{\mathrm{od}}|^k\big] \le C_k.
\end{equation}

\end{assumption}
Using the notation of the previous assumption, we abbreviate $s_4 = s_4(\chi_{\mathrm{od}})$.

The asymptotic eigenvalue density profile of such $H_N$ is described by the semicircular law, which has density
\[
\scdens(x)=\frac{1}{2\pi}\sqrt{(4-x^2)_+}.
\]

We now want to study the convergence of the $\log$-determinant and of the eigenvalue counting function as random processes. For this purpose we define the covariance kernels:
\begin{definition}[Covariance kernels]
\emph{ For any compact interval $I\Subset (-2,2)$, and for $E,E'\in I$, we define the kernels}
\begin{equation}\label{eq:Klog}
K_{\mathrm{log}}(E,E')
=
-\log|E-E'|+\frac{s_4}{8}(2-E^2)(2-E'^2),
\end{equation}
\emph{and}
\begin{align}
K_{\mathrm{cnt}}(E,E')
&=
-\frac{1}{\pi^2}\log|E-E'|
\notag+\frac{1}{\pi^2}
\log\left(
\frac{4-EE'+\sqrt{4-E^2}\sqrt{4-E'^2}}{2}
\right)
\notag\\
&\quad
+\frac{s_4}{8\pi^2} EE' \sqrt{4-E^2}\sqrt{4-E'^2}.
\label{eq:Kcnt}
\end{align}
\end{definition}

For $\phi,\psi\in C_c^\infty(I)$ we also define the bilinear forms
\begin{equation}\label{eq:Vlog}
V_{\mathrm{log}}(\phi,\psi)
=
\iint_{I\times I}\phi(E)K_{\mathrm{log}}(E,E')\psi(E')\,dE\,dE'
\end{equation}
and $V_{\mathrm{cnt}}(\phi,\psi)$ is defined by replacing $K_{\mathrm{log}}$ with $K_{\mathrm{cnt}}$. 
These bilinear forms extend continuously to $H_0^s(I)$ for every $s>0$ (see Proposition \ref{prop:covariance-continuity} below); we use this fact implicitly in the theorem statements below.

We are now ready to state our main theorems. For $E\in I$, define the (centered) $\log$-determinant
\[
L_N(E)=\log|\det(H_N-E)|-\E \log|\det(H_N-E)|
\]
and, for $\phi\in C_c^\infty(I)$, let
\begin{equation}\label{eq:log-pairing}
\ip{X_N^{\mathrm{log}}}{\phi}
=
\int_I \phi(E)L_N(E)\,dE.
\end{equation}

\begin{theorem}\label{thm:log-main}
Fix any $s>0$. Let $H_N$ be a smooth real symmetric Wigner matrix satisfying Assumption~\ref{assumption:hn}. Then 
\[
X_N^{\mathrm{log}} \Rightarrow X^{\mathrm{log}}
\]
in law in $H^{-s}(I)$, where $X^{\mathrm{log}}$ is the centered Gaussian random element of $H^{-s}(I)$ uniquely determined by
\[
\E\bigl[\ip{X^{\mathrm{log}}}{h}\ip{X^{\mathrm{log}}}{g}\bigr]
=
V_{\mathrm{log}}(h,g),
\qquad
h,g\in H_0^s(I).
\]
\end{theorem}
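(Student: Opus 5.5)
The proof has two parts, as outlined in the introduction: convergence of the finite-dimensional distributions on a dense class of test functions, and tightness in $H^{-s}(I)$ from a uniform second-moment bound in a slightly stronger norm.

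\emph{Step 1: finite-dimensional convergence.} For $\phi\in C_c^\infty(I)$, Fubini gives
\[
\ip{X_N^{\mathrm{log}}}{\phi}=\sum_i F_\phi(\lambda_i)-\E\sum_i F_\phi(\lambda_i),
\qquad F_\phi(x)=\int_I \phi(E)\log|x-E|\,dE .
\]
Here $F_\phi$ is the convolution of $\phi$ with the logarithm. Away from $I$ it is smooth, though it grows logarithmically at infinity. Its derivative is a Hilbert transform of $\phi$, hence smooth, so $F_\phi$ is locally smooth everywhere. Fubini is justified because $\log|\cdot|$ is locally integrable and the eigenvalues have a bounded density (smooth entries).

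I would split $F_\phi=\chi F_\phi+(1-\chi)F_\phi$, where $\chi$ is a smooth cutoff equal to $1$ near $[-2-\epsilon,2+\epsilon]$.
\begin{itemize}
\item The tail piece contributes negligibly in $L^2$, since the eigenvalues lie in $[-2-\epsilon,2+\epsilon]$ with overwhelming probability and the function grows only logarithmically.
\item The main piece $\chi F_\phi$ is smooth and compactly supported, so the CLT of Lytova--Pastur \cite{LP09} and Shcherbina \cite{Sh11} applies.
\end{itemize}
By Cramér--Wold this also handles finite families of test functions. This step does not need \cite{LS22}; that input is mainly useful for the counting field.

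It then remains to check that the limiting Lytova--Pastur variance, written in the Chebyshev form $\sum_n \frac{n}{2}\,c_n(f)c_n(g)$ plus the $s_4$-correction, equals $V_{\mathrm{log}}(\phi,\psi)$. I would use
\[
\log|x-E|=-\log 2-\sum_{n\ge 1}\tfrac{2}{n}T_n(x/2)T_n(E/2),
\]
which says the Chebyshev coefficients of $F_\phi$ are $-\frac{2}{n}\ip{\phi}{T_n(\cdot/2)}$. Substituting gives
\[
\sum_n \tfrac{2}{n}\ip{\phi}{T_n}\ip{\psi}{T_n},
\]
and the identity of Section~\ref{s:logfieldcovariance} turns this into $-\log|E-E'|$ up to a constant. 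That constant is killed only if $\int\phi=0$, so I would double-check the normalization against \eqref{eq:Klog}. The $s_4$ term comes from the $T_2$ coefficient, $T_2(E/2)=(E^2-2)/2$, which gives $\frac{s_4}{8}(2-E^2)(2-E'^2)$.

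\emph{Step 2: uniform bound and tightness.} For any $r>0$ we have
\[
\E\|X_N^{\mathrm{log}}\|_{H^{-r}(I)}^2=\sum_k(1+\mu_k)^{-r}\,\Var\bigl(\ip{X_N^{\mathrm{log}}}{e_k}\bigr).
\]
It suffices to show $\Var\ip{X_N^{\mathrm{log}}}{e_k}\le C k^{-1}\log k$ (or even $Ck^{-1+r/2}$) uniformly in $N$. Because of the boundary of $I$, $e_k$ is not compactly supported. However, $F_{e_k}$ is still a nice function: its derivative is the Hilbert transform of $e_k\1_I$, which has logarithmic singularities only at $a$ and $b$, so $F_{e_k}\in H^{1/2+\varepsilon}$ locally.

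To get a variance bound uniform in $N$, I would use the Landon--Sosoe estimate \cite{LS22}, $\Var(\sum f(\lambda_i))\le C\|f\|_{H^{1/2+\varepsilon}}^2$ on bulk-supported functions. I would apply it to $\chi_I F_{e_k}$, with $\chi_I$ a cutoff equal to $1$ on a neighborhood of $I$ inside $(-2,2)$. The remaining smooth part, supported away from $I$, is handled by the smooth-function variance bound of \cite{Sh11}.

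The norm of $F_{e_k}$ then needs to be computed. Since $\widehat{F}\sim \widehat{e_k\1_I}/|\xi|$, we get $\|F_{e_k}\|_{H^{1/2+\varepsilon}}^2\lesssim\int|\xi|^{-1+2\varepsilon}|\widehat{e_k\1_I}|^2$. The Fourier transform of $e_k\1_I$ concentrates near $|\xi|\approx\pi k/L$ with $\xi^{-1}$ tails from the endpoints, which gives a bound $\lesssim k^{-1+2\varepsilon}$. Choosing $\varepsilon<r/4$ makes $\sum_k k^{-r}k^{-1+2\varepsilon}$ converge.

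\emph{Conclusion.} The inclusion $H^{-r}\hookrightarrow H^{-s}$ is compact for $r<s$, so Chebyshev's inequality gives tightness in $H^{-s}$. Every subsequential limit has the finite-dimensional laws identified in Step 1 on the dense set $C_c^\infty(I)$. Proposition~\ref{prop:covariance-continuity} extends the covariance to $H_0^s$, which shows the limit is unique and equals $X^{\mathrm{log}}$.

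\emph{Main obstacle.} I expect the main obstacle to be the uniform-in-$N$ variance bound in Step 2. The constant in \cite{LS22} must be uniform over the family $\chi_I F_{e_k}$, and the $\log$ singularities of $F_{e_k}'$ at the endpoints of $I$ must be controlled. If $H^{1/2+\varepsilon}$ with an explicit, uniform constant is not directly available from \cite{LS22}, I would instead bound the variance through resolvent covariance estimates $\mathrm{Cov}(\Tr G(z),\Tr G(w))$ integrated against almost-analytic extensions.
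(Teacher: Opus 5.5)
Your architecture matches the paper's. You remove the logarithmic tail using $\|H_N\|\le 3$ with overwhelming probability, apply the smooth CLT of \cite{LP09,Sh11} to the compactly supported piece, and identify the variance through Chebyshev coefficients. For tightness you split $F_{e_k}$ into a bulk piece controlled by the $H^{1/2+\varepsilon}$ variance bound of \cite{LS22} (giving $k^{-1+2\varepsilon}$), a smooth remainder, and a tail. Your explicit Fourier estimate of $\|\chi F_{e_k}\|_{H^{1/2+\varepsilon}}$ is a hands-on version of Lemma~\ref{lem:log-operator}. That lemma truncates the kernel to $\zeta(y)\log|y|$; its symbol bound $O(\langle\xi\rangle^{-1})$ is what makes your heuristic $\widehat F\sim\widehat{e_k\1_I}/|\xi|$ rigorous at low frequencies. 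It then uses $\|e_k\|^2_{H^{-1/2+\varepsilon}(I)}=(1+\mu_k)^{-1/2+\varepsilon}$ via boundedness of zero extension $H^{-1/2+\varepsilon}(I)\to H^{-1/2+\varepsilon}(\R)$. The uniformity you list as the main obstacle is already built into \eqref{eq:LS-variance}, whose constant depends only on $\varepsilon$ and the distance to the edge.

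The genuine gap is in the conclusion. You identify subsequential limits directly in $H^{-s}(I)$, which requires $C_c^\infty(I)$ to be dense in $H_0^s(I)$. With the paper's Dirichlet-spectral definition this fails for $s>3/2$. The sequence $u_k=e_k'(a)=\sqrt{2/L}\,\pi k/L$ satisfies $\sum_k(1+\mu_k)^{-s}u_k^2<\infty$ for $s>3/2$. It is therefore a nonzero element of $H^{-s}(I)$ (the functional $h\mapsto h'(a)$) that annihilates all of $C_c^\infty(I)$. Consequently the laws of $\ip{Y}{\phi}$ for $\phi\in C_c^\infty(I)$ do not determine a law on $H^{-s}(I)$, and extending the covariance via Proposition~\ref{prop:covariance-continuity} needs a density you do not have.

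The repair uses what you already proved. Since $\sup_N\E\|X_N^{\mathrm{log}}\|^2_{H^{-r}(I)}<\infty$ for every $r>0$, run tightness and uniqueness in $H^{-\sigma}(I)$ with $0<\sigma<\min\{s,1/2\}$, where $C_c^\infty(I)$ is dense in $H_0^\sigma(I)$. Then push the limit forward along the continuous embedding $H^{-\sigma}(I)\hookrightarrow H^{-s}(I)$. This is exactly the role of the auxiliary exponent $\sigma$ in the paper.

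There are also two smaller corrections.

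First, the constant $-\log 2$ in your expansion belongs to the $[-1,1]$ normalization. On $[-2,2]$ one has $\log|2\cos\alpha-2\cos\theta|=-2\sum_{k\ge1}\cos(k\alpha)\cos(k\theta)/k$ with no constant (Lemma~\ref{lem:logseries}). So the resummed kernel is exactly $K_{\mathrm{log}}$, and there is nothing to reconcile.

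Second, in Step~2, summability over $k$ also needs the non-bulk pieces to have variance $O(k^{-2})$, which you do not verify. Moreover, $(1-\chi_I)F_{e_k}$ is not compactly supported, so it needs a further cutoff at $|x|=3$ before a smooth variance bound applies. One integration by parts in $E$ gives $\|(\eta-\chi)F_{e_k}\|_{H^q(\R)}\lesssim k^{-1}$ and $|F_{e_k}(x)|\lesssim k^{-1}\log(2+|x|)$ for $|x|>3$, as in Lemmas~\ref{lem:log-remainder-smoothing} and~\ref{lem:log-tail-basis}. Alternatively, you can bound these two pieces at the level of the field in $L^2(I)$, which dominates every $H^{-r}(I)$ norm.
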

The proof of Theorem~\ref{thm:log-main} is given in Section~\ref{s:logfieldproof}. 
Previously the convergence in Theorem~\ref{thm:log-main} was known only in the Gaussian case  and for $s>1/2$ \cite[Theorem 2.1]{fyodorov2016fractional}. Our result holds for a fairly general class of real symmetric matrices, delineated in Assumption~\ref{assumption:hn}, and the convergence holds in a stronger sense (even in the Gaussian case), as Theorem~\ref{thm:log-main} holds for any $s>0$.

In \cite{fyodorov2016fractional}, the limiting field $X^{\mathrm{log}}$ was
identified, in the complex Hermitian Gaussian case and in the normalization where the spectrum
is supported on $[-1,1]$, as the random Chebyshev--Fourier series
\[
    \sum_{n\ge 1}\frac{a_n}{\sqrt n}T_n(x),
\]
where $a_n$ are independent standard real Gaussians and $T_n$ are the
Chebyshev polynomials (of the first kind). We remark that this series must be understood as a random distribution, as it does not define a true random function.  

For Wigner matrices, in the present normalization the spectrum is supported
on $[-2,2]$, so the corresponding Chebyshev modes are $
    \widetilde T_n(E):=T_n(E/2)$. 
Since
\[
    -\log |E-E'|
    =
    2\sum_{n\ge 1}\frac{1}{n}
        \widetilde T_n(E)\widetilde T_n(E'),
\]
as is well known (and shown below in Section~\ref{s:logfieldcovariance}), the Gaussian real-symmetric
field with covariance kernel $-\log |E-E'|$ is
\[
    \sqrt{2}\sum_{n\ge 1}\frac{a_n}{\sqrt n}\widetilde T_n(E).
\]
The factor of $\sqrt{2}$ difference with  \cite{fyodorov2016fractional} comes from the change in symmetry class. 
Jensen's inequality guarantees that $1+s_4/2\ge 0$, and one still has
the explicit representation
\[
    X^{\log}(E)
    =
    \sum_{n\ne 2}\sqrt{\frac{2}{n}}\,a_n\widetilde T_n(E)
    +
    \sqrt{1+\frac{s_4}{2}}\,a_2\widetilde T_2(E),
\]
where we again note that this series is understood as a random distribution. 
This absorbs the $s_4$-correction into the variance of the second
Chebyshev mode. It follows from the definition of $K_{\mathrm{log}}$ and the definition of $\widetilde T_2$.

Additionally, the limiting field acts on test functions $f$ as
\[
    X^{\log}[f]
    =
    \sum_{n\ne 2}\sqrt{\frac{2}{n}}\,a_n d_n(f)
    +
    \sqrt{1+\frac{s_4}{2}}\,a_2 d_2(f),
    \qquad
    d_n(f):=\int_I f(E)\widetilde T_n(E)\,\dif E .
\]
Consequently,
\[
    \E X^{\log}[f]X^{\log}[g]
    =
    \sum_{n\ge1}\frac{2}{n}d_n(f)d_n(g)
    +
    \frac{s_4}{2}d_2(f)d_2(g)
    =
    V_{\mathrm{log}}(f,g).
\]

To compare this with the usual CLT for smooth linear eigenvalue statistics,
let
\[
    F(\lambda):=\int_I f(E)\log|\lambda-E|\,\dif E
\]
and define the arcsine Chebyshev coefficients
\[
    \alpha_n(F):=
    \frac{1}{\pi}\int_{-2}^2
        \frac{F(\lambda)\widetilde T_n(\lambda)}
             {\sqrt{4-\lambda^2}}\,\dif\lambda .
\]
The Chebyshev identity
\[
    \frac{1}{\pi}\int_{-2}^2
        \frac{\log|\lambda-E|\,\widetilde T_n(\lambda)}
             {\sqrt{4-\lambda^2}}\,\dif\lambda
    =
    -\frac{1}{n}\widetilde T_n(E),
    \qquad n\ge1
\]
implies $d_n(f)=-n\alpha_n(F)$. 
Therefore
\[
    V_{\mathrm{log}}(f,f)
    =
    \sum_{n\ge1}2n\,\alpha_n(F)^2
    +
    2s_4\,\alpha_2(F)^2.
\]
This is precisely the real-symmetric 
variance formula for smooth linear eigenvalue statistics, recovering
\cite[Eq.\ (1.5)]{bao2016clt}.

We now state an analogous result for the eigenvalue counting function. For $E\in I$, define
\[
\mathfrak{n}_N(E)=\#\{j:\lambda_j(H_N)\le E\}-\E \#\{j:\lambda_j(H_N)\le E\},
\]
and, for every $\phi\in C_c^\infty(I)$,
\begin{equation}\label{eq:count-pairing}
\ip{X_N^{\mathrm{cnt}}}{\phi}
=
\int_I \phi(E)\mathfrak{n}_N(E)\,dE.
\end{equation}

\begin{theorem}\label{thm:count-main}
Fix any $s>0$. Let $H_N$ be a smooth real symmetric Wigner matrix satisfying Assumption~\ref{assumption:hn}. Then 
\[
X_N^{\mathrm{cnt}} \Rightarrow X^{\mathrm{cnt}}
\]
in law in $H^{-s}(I)$, where $X^{\mathrm{cnt}}$ is the centered Gaussian random element of $H^{-s}(I)$ uniquely determined by
\[
\E\bigl[\ip{X^{\mathrm{cnt}}}{h}\ip{X^{\mathrm{cnt}}}{g}\bigr]
=
V_{\mathrm{cnt}}(h,g),
\qquad
h,g\in H_0^s(I).
\]
\end{theorem}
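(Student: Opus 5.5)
The plan mirrors the strategy announced in the introduction: prove convergence of finite-dimensional marginals on a dense class of test functions, then prove a uniform Sobolev bound that gives tightness. \emph{Step 1: rewrite the pairing as a linear statistic.} For $\phi\in C_c^\infty(I)$, Fubini gives
\[
\ip{X_N^{\mathrm{cnt}}}{\phi}=\sum_j F_\phi(\lambda_j)-\E\sum_j F_\phi(\lambda_j),\qquad F_\phi(\lambda)=\int_I \phi(E)\1_{\{\lambda\le E\}}\,\dif E=\int_{\max(\lambda,a)}^b\phi(E)\,\dif E .
\]
The function $F_\phi$ is smooth on $\R$. It equals the constant $\int_I\phi$ for $\lambda\le a$ and vanishes for $\lambda\ge b$. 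Subtracting the constant $\int_I\phi$ does not change the centered statistic, so we may replace $F_\phi$ by a function in $C_c^\infty(\R)$ with $F_\phi'=-\phi$. The smooth-test-function CLT of Lytova--Pastur \cite{LP09} and Shcherbina \cite{Sh11} then gives joint Gaussian convergence of $(\ip{X_N^{\mathrm{cnt}}}{\phi_1},\dots,\ip{X_N^{\mathrm{cnt}}}{\phi_m})$, via the Cramér--Wold device applied to linear combinations. The limit variance is $\sum_n n\,\alpha_n(F_\phi)^2$ up to the real-symmetric normalization, plus the $s_4$ correction $2s_4\alpha_2(F_\phi)^2$.

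\emph{Step 2: identify the variance with $V_{\mathrm{cnt}}(\phi,\phi)$.} This is a computation similar to the one done above for the log field. Write $\1_{\{\lambda\le E\}}$ in its arcsine--Chebyshev expansion; its coefficients involve $\sin(n\theta)/n$ with $E=2\cos\theta$. This gives $K_{\mathrm{cnt}}(E,E')=\frac{2}{\pi^2}\sum_n \frac1n \sin(n\theta)\sin(n\theta')+\frac{s_4}{8\pi^2}(\dots)$. The resummation $\sum_n \frac1n\sin n\theta\sin n\theta'=\frac12\log\bigl|\frac{\sin((\theta+\theta')/2)}{\sin((\theta-\theta')/2)}\bigr|$ produces exactly the two logarithms in \eqref{eq:Kcnt}. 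The $n=2$ mode, with $\sin 2\theta\propto E\sqrt{4-E^2}$, produces the $s_4$ term.

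\emph{Step 3: uniform Sobolev bound.} This is the step I expect to be the main obstacle. For $r>0$ we have
\[
\E\|X_N^{\mathrm{cnt}}\|_{H^{-r}(I)}^2=\sum_k (1+\mu_k)^{-r}\Var\bigl(\ip{X_N^{\mathrm{cnt}}}{e_k}\bigr).
\]
Here $e_k$ is not compactly supported in the open interval, so I would use the Landon--Sosoe variance bound \cite{LS22}. This requires $\Var(\sum_j f(\lambda_j))\le C\|f\|_{H^{1/2+\varepsilon}}^2$, uniformly in $N$, for $f$ supported in a bulk neighbourhood of $I$, or the analogous bound in their weighted norm. Apply it to $F_{e_k}$, cut off smoothly outside a slightly larger bulk interval $I'$. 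The cutoff region contributes the boundary values $\int_I e_k$, which are $O(1/k)$ constants times smooth profiles. The main part satisfies $\|F_{e_k}\|_{H^{1/2+\varepsilon}}^2\lesssim k^{-1+2\varepsilon}$, since $F_{e_k}$ is an antiderivative of a mode of frequency $\sim k$ with a jump-free but kinked edge behaviour. Summing, we get $\sum_k k^{-2r}k^{-1+2\varepsilon}<\infty$ for $\varepsilon<r$, uniformly in $N$. If \cite{LS22} only gives a CLT rather than a uniform variance bound, I would fall back on the standard rigidity/local-law estimate $\Var(\mathfrak{n}_N(E))\lesssim\log N$. That alone is too weak, so the real input must be an $N$-uniform $H^{1/2+\varepsilon}$ variance bound. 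I would extract it from the characteristic-function or resolvent-based proof in \cite{LS22}, using the Helffer--Sjöstrand representation and the bound $\Var(\Tr G(z_1),\Tr G(z_2))\lesssim|z_1-\bar z_2|^{-2}$ for bulk $z_i$.

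\emph{Step 4: conclusion.} The bound of Step 3 with $r<s$, together with the compact embedding $H^{-r}(I)\hookrightarrow H^{-s}(I)$ and Markov's inequality, gives tightness in $H^{-s}(I)$. Steps 1--2 identify all subsequential limits on the dense set $C_c^\infty(I)\subset H_0^s(I)$. By Proposition~\ref{prop:covariance-continuity} the covariance extends continuously, so the limit is the unique centered Gaussian element $X^{\mathrm{cnt}}$ with covariance $V_{\mathrm{cnt}}$.
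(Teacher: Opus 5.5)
Your architecture matches the paper's. You write the pairing as the linear statistic of $G_\phi(x)=\int_x^\infty\phi$, obtain the covariance by the sine-series resummation, and control the bulk piece $\chi G_{e_k}$ with the uniform Landon--Sosoe variance bound, which does exist (\cite[(1.23)]{LS22}, used as Theorem~\ref{thm:LS}). The estimate $\|\chi G_{e_k}\|_{H^{1/2+\varepsilon}}^2\lesssim k^{-1+2\varepsilon}$ follows by interpolating $\|\chi G_{e_k}\|_{L^2}\lesssim k^{-1}$ and $\|\chi G_{e_k}\|_{H^1}\lesssim1$, and tightness follows from the compact embedding. However, two steps are wrong as written.

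\emph{Step 1.} There is no $C_c^\infty(\R)$ replacement in general. $F_\phi$ equals $\int_I\phi$ to the left of $I$ and $0$ to the right, so subtracting a constant only moves the nonzero plateau to the other side. A compactly supported antiderivative of $-\phi$ exists only when $\int_I\phi=0$, so the smooth CLT does not apply directly. You must truncate: write $G_\phi=\eta G_\phi+(1-\eta)G_\phi$ with $\eta\equiv1$ on $[-3,3]$. The second piece is bounded and vanishes on $[-3,3]$, so its centered statistic has $L^2$ norm at most $CN\Pbb(\|H_N\|>3)^{1/2}\to0$ by Proposition~\ref{prop:norm-tail}. Moreover $V(\eta G_\phi,\eta G_\phi)=V(G_\phi,G_\phi)$, since only values on $[-2,2]$ enter. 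The same ingredients are what make your ``$O(1/k)$ constants times smooth profiles'' in Step 3 harmless, and you should say so. The piece $(\eta-\chi)G_{e_k}$ needs an $N$-uniform variance bound for test functions not supported in the bulk; the paper uses Theorem~\ref{thm:SW}. The piece $(1-\eta)G_{e_k}$ again needs the norm tail bound.

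\emph{Step 4.} This is the more serious gap. You use that $C_c^\infty(I)$ is dense in $H_0^s(I)$, which is false for $s>3/2$ with the Dirichlet-spectral definition used here. The functional $h\mapsto h'(a)=\sqrt{2/L}\sum_k(\pi k/L)h_k$ is continuous on $H_0^s(I)$ when $s>3/2$, by Cauchy--Schwarz since $\sum_k k^{2-2s}<\infty$. It vanishes on $C_c^\infty(I)$ but not on $e_1$. It is therefore a nonzero $u\in H^{-s}(I)$ annihilating $C_c^\infty(I)$, and any law on $H^{-s}(I)$ and its translate by $u$ have the same characteristic functional on $C_c^\infty(I)$. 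So agreement on $C_c^\infty(I)$ does not pin down subsequential limits in $H^{-s}(I)$. Nor does it let you extend $\exp(-\tfrac12V_{\mathrm{cnt}}(h,h))$ to all $h\in H_0^s(I)$ by continuity, which the stated characterization of $X^{\mathrm{cnt}}$ requires.

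The paper avoids this by fixing $0<\sigma<\min\{s,1/2\}$, where $C_c^\infty(I)$ is dense in $H_0^\sigma(I)$. It then proceeds as follows:
\begin{enumerate}[label=(\roman*)]
\item Tightness in $H^{-\sigma}(I)$ comes from your Step 3 bound with some $r<\sigma$.
\item Convergence is proved and the limit is identified as Gaussian in $H^{-\sigma}(I)$.
\item The continuous mapping theorem is applied to the embedding $H^{-\sigma}(I)\hookrightarrow H^{-s}(I)$.
\end{enumerate}
Since $H_0^s(I)\subset H_0^\sigma(I)$, this gives the covariance identity on $H_0^s(I)$. With these two repairs your argument coincides with the paper's.
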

The proof of Theorem~\ref{thm:count-main} is given in Section~\ref{s:countingfieldproof}. 
\section{Preliminary Results}

In this section, we recall some standard results that will be needed for our work. Section \ref{s:functional} collects results from functional analysis, and Section \ref{s:linearstats} collects results on linear statistics of random matrix eigenvalues. 

\subsection{Functional Analysis Background}\label{s:functional}

\begin{proposition}\label{prop:compact-embedding}
If $0<r<s$, then the embedding $
H^{-r}(I)\hookrightarrow H^{-s}(I)$ 
is compact.
\end{proposition}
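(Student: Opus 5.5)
The plan is to work entirely in the coordinates given by the Dirichlet eigenbasis $(e_k)_{k\ge1}$. By definition, the map $u\mapsto (u_k)_{k\ge1}$ with $u_k=\ip{u}{e_k}_{L^2(I)}$ is a unitary isomorphism from $H^{-s}(I)$ onto the weighted sequence space $\ell^2_{s}=\{(u_k): \sum_k (1+\mu_k)^{-s}u_k^2<\infty\}$. We first check this on $L^2(I)$, where it is immediate, and then pass to the completion. Under this identification the embedding $H^{-r}(I)\hookrightarrow H^{-s}(I)$ is the identity on sequences. It is well defined and bounded with norm at most $1$, because $(1+\mu_k)^{-s}\le(1+\mu_k)^{-r}$ for $r<s$; this also shows that the completions are compatible, since $L^2(I)$ is dense in both spaces.

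To prove compactness, we approximate the embedding $\iota$ in operator norm by finite-rank operators. For $M\ge1$, let $P_M$ be the truncation $u\mapsto\sum_{k\le M}u_k e_k$. Each $P_M\colon H^{-r}(I)\to H^{-s}(I)$ is bounded and has finite rank. For $u\in H^{-r}(I)$ we estimate
\[
\|(\iota-P_M)u\|_{H^{-s}(I)}^2=\sum_{k>M}(1+\mu_k)^{-s}u_k^2\le (1+\mu_{M})^{-(s-r)}\sum_{k>M}(1+\mu_k)^{-r}u_k^2\le (1+\mu_M)^{-(s-r)}\|u\|_{H^{-r}(I)}^2 .
\]
Since $\mu_M=(\pi M/L)^2\to\infty$ and $s-r>0$, it follows that $\|\iota-P_M\|\to0$. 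A norm limit of finite-rank operators between Banach spaces is compact, so $\iota$ is compact.

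As an alternative to the operator-norm argument, one can give a diagonal-subsequence argument. Take a bounded sequence in $H^{-r}(I)$ and extract a subsequence along which every coordinate converges. The uniform tail bound above then shows that this subsequence is Cauchy in $H^{-s}(I)$.

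No step here is a real obstacle. The only point needing care is to justify that the coefficients $u_k$ extend continuously to elements of the completion and that the two completions sit compatibly inside each other. We plan to handle this by working directly with the sequence-space model, which is isometric to each completion.
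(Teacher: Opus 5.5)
Your proposal is correct and takes essentially the same approach as the paper. Both use the truncation $P_M$ onto $\operatorname{span}\{e_1,\dots,e_M\}$ and the uniform tail bound $\|u-P_Mu\|_{H^{-s}(I)}^2\le(1+\mu_M)^{-(s-r)}\|u\|_{H^{-r}(I)}^2$. Your main conclusion, that $\|\iota-P_M\|\to0$ so $\iota$ is a norm limit of finite-rank operators, is a slightly cleaner packaging of the paper's diagonal-subsequence argument, which you also give as your alternative.
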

\begin{proof}
Let $(u_n)_{n=1}^\infty$ be any bounded sequence in $H^{-r}(I)$, and recall $e_k, \mu_k$ from \eqref{eq:dirichlet-basis}. Writing $u_{n,k}=\ip{u_n}{e_k}$, we have
\[
\sup_n \sum_{k\ge1}(1+\mu_k)^{-r}|u_{n,k}|^2<\infty.
\]
Fix $M\ge1$ and let $P_M$ be the orthogonal projection onto $\operatorname{span}\{e_1,\dots,e_M\}$. Then,
\[
\|u_n-P_Mu_n\|_{H^{-s}(I)}^2
=
\sum_{k>M}(1+\mu_k)^{-s}|u_{n,k}|^2
\le
(1+\mu_M)^{-(s-r)}\|u_n\|_{H^{-r}(I)}^2,
\]
where the last inequality follows from the fact that the $\mu_k$ are increasing in $k$. 
The right-hand side tends to $0$ uniformly in $n$ as $M\to\infty$. Since $P_M$ has finite rank, $(P_Mu_n)_{n=1}^\infty$ has a convergent subsequence in $H^{-s}(I)$ for every fixed $M$. Then a standard diagonal argument shows that  $(u_n)_{n=1}^\infty$ has a convergent subsequence in $H^{-s}(I)$, which implies the conclusion.
\end{proof}

\begin{proposition}\label{prop:tightness-from-stronger}
Let $0<r<s$ and let $(Y_N)$ be a sequence of $H^{-s}(I)$-valued random variables such that
\[
\sup_N \E \|Y_N\|_{H^{-r}(I)}^2 < \infty.
\]
Then $(Y_N)$ is tight in $H^{-s}(I)$.
\end{proposition}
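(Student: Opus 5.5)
The plan is to produce, for each $\varepsilon>0$, a single compact set in $H^{-s}(I)$ that carries probability at least $1-\varepsilon$ under every $Y_N$. The natural candidate is a closed ball of $H^{-r}(I)$.

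For $R>0$ let
\[
B_R=\{u\in H^{-s}(I): \|u\|_{H^{-r}(I)}\le R\},
\]
where we set $\|u\|_{H^{-r}(I)}=+\infty$ if $u\notin H^{-r}(I)$. In coefficients, $\|u\|_{H^{-r}(I)}^2=\sum_k(1+\mu_k)^{-r}\ip{u}{e_k}^2$, with $\ip{u}{e_k}$ extended continuously to $H^{-s}(I)$.

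First, $B_R$ is closed in $H^{-s}(I)$. Each coefficient map $u\mapsto \ip{u}{e_k}$ is continuous on $H^{-s}(I)$. Hence every truncated sum $u\mapsto\sum_{k\le M}(1+\mu_k)^{-r}\ip{u}{e_k}^2$ is continuous. The full $H^{-r}$ norm is the supremum over $M$ of these, so it is lower semicontinuous on $H^{-s}(I)$. Its sublevel set $B_R$ is therefore closed.

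Second, $B_R$ is relatively compact in $H^{-s}(I)$. It is the image of a bounded subset of $H^{-r}(I)$ under the embedding, which is compact by Proposition~\ref{prop:compact-embedding}. Being closed and relatively compact, $B_R$ is compact in $H^{-s}(I)$. (Alternatively, one can check total boundedness directly using the tail estimate $(1+\mu_M)^{-(s-r)}R^2$ from the proof of that proposition, together with completeness of $H^{-s}(I)$.)

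Third, apply Markov's inequality. Let $C=\sup_N\E\|Y_N\|_{H^{-r}(I)}^2$. Then for every $N$,
\[
\Pbb(Y_N\notin B_R)=\Pbb(\|Y_N\|_{H^{-r}(I)}>R)\le C/R^2 .
\]
Choosing $R=\sqrt{C/\varepsilon}$ makes this at most $\varepsilon$ uniformly in $N$, which is tightness.

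There is one measurability point. The event $\{\|Y_N\|_{H^{-r}(I)}>R\}$ must be measurable, and this follows because $\|\cdot\|_{H^{-r}(I)}$ is a lower semicontinuous, hence Borel, function on $H^{-s}(I)$.

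The main obstacle, though a mild one, is confirming that $B_R$ is closed in the weaker topology; the lower semicontinuity argument in the first step handles it. Everything else is a direct combination of Proposition~\ref{prop:compact-embedding} and Markov's inequality.
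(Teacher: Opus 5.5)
Your proof is correct and follows essentially the same route as the paper: Markov's inequality applied to the $H^{-r}(I)$-norm, combined with compactness in $H^{-s}(I)$ of closed $H^{-r}(I)$-balls via Proposition~\ref{prop:compact-embedding}. The paper simply asserts that the ball is compact, whereas compactness of the embedding only gives relative compactness. Your lower-semicontinuity argument supplies the missing closedness of $B_R$ in $H^{-s}(I)$, and with it the measurability of the norm.
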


\begin{proof}
For $R>0$, let
\[
B_R^{(-r)}=\{u\in H^{-r}(I):\|u\|_{H^{-r}(I)}\le R\}.
\]
By Proposition \ref{prop:compact-embedding}, $B_R^{(-r)}$ is compact when viewed as a subset of $H^{-s}(I)$. Markov's inequality gives
\[
\sup_N \Pbb\big (Y_N\notin B_R^{(-r)}\big )
\le
R^{-2}\sup_N \E \|Y_N\|_{H^{-r}(I)}^2.
\]
Taking $R\to\infty$ yields the conclusion. 
\end{proof}

\begin{proposition}\label{prop:dense-characteristic}
Let $\mu$ and $\nu$ be Borel probability measures on $H^{-s}(I)$. If their characteristic functionals agree on a dense subset of $H_0^s(I)$, then $\mu=\nu$.
\end{proposition}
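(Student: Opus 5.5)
The plan is to reduce the statement to the classical uniqueness theorem for characteristic functionals on a separable Hilbert space, using continuity to pass from the dense set to all of $H_0^s(I)$. Identify $H^{-s}(I)$ with $\ell^2$ through the coordinates $u\mapsto \bigl((1+\mu_k)^{-s/2}\ip{u}{e_k}\bigr)_{k\ge1}$, where $\ip{u}{e_k}$ denotes the continuous extension of the $L^2$ pairing to $H^{-s}(I)$. Under this identification, the duality pairing between $H^{-s}(I)$ and $H_0^s(I)$ is $\ip{u}{h}=\sum_k h_k\ip{u}{e_k}$, and it satisfies $|\ip{u}{h}|\le \|u\|_{H^{-s}(I)}\|h\|_{H_0^s(I)}$ by Cauchy--Schwarz. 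The characteristic functional is $\widehat\mu(h)=\int e^{i\ip{u}{h}}\,\mu(\dif u)$ for $h\in H_0^s(I)$.

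The first step is to show that $\widehat\mu$ and $\widehat\nu$ are continuous on $H_0^s(I)$. If $h_n\to h$ in $H_0^s(I)$, then $e^{i\ip{u}{h_n}}\to e^{i\ip{u}{h}}$ for every fixed $u$, since the pairing bound gives pointwise convergence. The integrands are bounded by $1$, so dominated convergence gives $\widehat\mu(h_n)\to\widehat\mu(h)$, and likewise for $\nu$. Since the two functionals agree on a dense subset of $H_0^s(I)$, continuity forces $\widehat\mu=\widehat\nu$ on all of $H_0^s(I)$.

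The second step is to conclude from $\widehat\mu=\widehat\nu$ on $H_0^s(I)$. For each $M$, consider the projection $\pi_M\colon H^{-s}(I)\to\R^M$, $u\mapsto(\ip{u}{e_1},\dots,\ip{u}{e_M})$. For $t\in\R^M$, taking $h=\sum_{k\le M}t_ke_k\in H_0^s(I)$ gives $\widehat\mu(h)=\int e^{i t\cdot\pi_M u}\,\mu(\dif u)$. So the characteristic functions of the pushforwards $\mu\circ\pi_M^{-1}$ and $\nu\circ\pi_M^{-1}$ coincide, and finite-dimensional uniqueness of Fourier transforms gives $\mu\circ\pi_M^{-1}=\nu\circ\pi_M^{-1}$ for every $M$. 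Hence $\mu$ and $\nu$ agree on the cylinder sets $\pi_M^{-1}(B)$ with $B$ Borel in $\R^M$. These sets form a $\pi$-system.

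The main point to verify is that this $\pi$-system generates the Borel $\sigma$-algebra of $H^{-s}(I)$. Because $H^{-s}(I)$ is a separable Hilbert space, its Borel $\sigma$-algebra is generated by open balls. The norm can be written as the measurable limit $\|u-v\|_{H^{-s}(I)}^2=\lim_M\sum_{k\le M}(1+\mu_k)^{-s}|\ip{u}{e_k}-\ip{v}{e_k}|^2$, so each ball lies in the $\sigma$-algebra generated by the coordinate maps. Dynkin's $\pi$-$\lambda$ theorem then gives $\mu=\nu$.
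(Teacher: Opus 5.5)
Your proof is correct. Your first step (continuity of $\widehat\mu,\widehat\nu$ via the pairing bound and dominated convergence, then extension from the dense set) is exactly the paper's argument.

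The two proofs differ only in the last step. The paper stops once the functionals agree on all of $H_0^s(I)$ and invokes the Hilbert-space version of Lévy's uniqueness theorem, citing Da Prato--Zabczyk. You instead prove that theorem directly:
\begin{itemize}
\item you reduce to the finite-dimensional marginals $\pi_M$ and apply Fourier uniqueness in $\R^M$;
\item you observe that the cylinder sets form a $\pi$-system;
\item you show they generate the Borel $\sigma$-algebra of the separable space $H^{-s}(I)$, since open sets are countable unions of balls and $\|u-v\|_{H^{-s}(I)}^2$ is a pointwise limit of cylinder-measurable functions;
\item you conclude with the $\pi$--$\lambda$ theorem.
\end{itemize}

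This is essentially the standard proof of the cited result. The paper's version is shorter; yours is self-contained and makes visible where separability enters. It also shows that agreement on $\operatorname{span}\{e_k:k\ge1\}$ alone already forces $\mu=\nu$. So continuity of the characteristic functionals is needed only to transfer agreement from an arbitrary dense subset to that span.
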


\begin{proof}
Let $\chi_\mu$ and $\chi_\nu$ be the characteristic functionals of $\mu$ and $\nu$:
\[
\chi_\mu(h)=\int_{H^{-s}(I)} e^{i\ip{u}{h}}\,d\mu(u),
\qquad
\chi_\nu(h)=\int_{H^{-s}(I)} e^{i\ip{u}{h}}\,d\nu(u),
\qquad h\in H_0^s(I).
\]
If $h_n\to h$ in $H_0^s(I)$, then for every $u\in H^{-s}(I)$, we have 
$
e^{i\ip{u}{h_n}}\to e^{i\ip{u}{h}}$, 
and the integrand is bounded in absolute value by $1$. The dominated convergence theorem therefore shows that both $\chi_\mu$ and $\chi_\nu$ are continuous on $H_0^s(I)$. By assumption the two functionals agree on a dense subset of $H_0^s(I)$, hence they agree everywhere by continuity. The Hilbert-space version of Lévy's uniqueness theorem then implies $\mu=\nu$; see, for example, \cite[(2.4)]{da2014stochastic}.
\end{proof}

\begin{proposition}\label{prop:convergence-criterion}
Fix $s>0$. Let $(Y_N)$ be a sequence of $H^{-s}(I)$-valued random variables. Assume that $(Y_N)$ is tight and that there exists a countable dense subset $\mathcal D\subset H_0^s(I)$ such that for every $\phi \in \mathcal D$, the sequence $\ip{Y_N}{\phi}$ 
converges in law as $N\to\infty$. Then $(Y_N)$ converges in law in $H^{-s}(I)$.
\end{proposition}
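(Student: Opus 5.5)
The argument is the standard Prokhorov-plus-uniqueness route. By Prokhorov's theorem, which applies since $H^{-s}(I)$ is a separable Hilbert space and hence Polish, tightness of $(Y_N)$ implies that every subsequence has a further subsequence converging in law to some Borel probability measure on $H^{-s}(I)$. It therefore suffices to show that all such subsequential limits coincide. A standard argument then gives convergence of the whole sequence: if $Y_N$ did not converge to the common limit $\mu$, there would be a bounded continuous $F$ and a subsequence along which $|\E F(Y_N)-\int F\,d\mu|\ge\varepsilon$. Extracting a further convergent subsequence from it would contradict uniqueness.

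To identify the limit, let $\mu$ and $\nu$ be two subsequential limits, along subsequences $(N_j)$ and $(N'_j)$. Fix $\phi\in\mathcal D$. The map $u\mapsto \ip{u}{\phi}$ is a continuous linear functional on $H^{-s}(I)$, because $\phi\in H_0^s(I)$ is its dual. By the continuous mapping theorem, $\ip{Y_{N_j}}{\phi}$ converges in law to the pushforward of $\mu$ under this functional, and $\ip{Y_{N'_j}}{\phi}$ converges to the pushforward of $\nu$. By hypothesis the full sequence $\ip{Y_N}{\phi}$ converges in law, so both pushforwards equal its limit. In particular, for every $t\in\R$,
\[
\int e^{it\ip{u}{\phi}}\,d\mu(u)=\int e^{it\ip{u}{\phi}}\,d\nu(u).
\]
Taking $t=1$ shows that the characteristic functionals $\chi_\mu$ and $\chi_\nu$ agree on $\mathcal D$. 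Since $\mathcal D$ is dense in $H_0^s(I)$, Proposition~\ref{prop:dense-characteristic} yields $\mu=\nu$.

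One subtlety is that the hypothesis only concerns one-dimensional marginals along the dense set, not joint finite-dimensional laws. Proposition~\ref{prop:dense-characteristic} needs only agreement of $\chi_\mu(h)$ for each single $h$ in a dense set, so one-dimensional laws suffice. Linear combinations are not needed, because the characteristic functional evaluated at each $h$ already encodes all directions. The only other point to check is the precise pairing convention: $\ip{u}{h}$ for $u\in H^{-s}$ and $h\in H^s_0$ is the extension of the $L^2$ pairing and is continuous in $u$. This is immediate from the Cauchy--Schwarz inequality in the $e_k$ coefficients.
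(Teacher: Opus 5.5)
Your proposal is correct and follows essentially the same route as the paper. Tightness gives subsequential limits, and the convergence of $\ip{Y_N}{\phi}$ for $\phi\in\mathcal D$ forces the characteristic functionals of any two limits to agree on the dense set $\mathcal D$. Proposition~\ref{prop:dense-characteristic} then identifies the limits, and the sub-subsequence argument gives convergence of the whole sequence; you just spell out what the paper leaves implicit (Prokhorov on the Polish space $H^{-s}(I)$, continuity of the pairing, and the continuous mapping theorem).
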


\begin{proof}
By tightness, every subsequence of $(Y_N)$ admits a further weakly convergent subsequence in $H^{-s}(I)$. Let $Y$ and $Y'$ be two subsequential limits. Then for every $\phi \in \mathcal D$, the laws of $\ip{Y}{\phi}$ and $\ip{Y'}{\phi}$ 
coincide by assumption. Proposition \ref{prop:dense-characteristic} therefore implies that $Y$ and $Y'$ have the same law. Hence all subsequential limits agree, and the full sequence $(Y_N)$ converges.
\end{proof}

\subsection{Linear Statistics}\label{s:linearstats}

Fix an auxiliary bulk interval $J\subset \R$ such that 
\[
I\Subset J\Subset (-2,2).
\]
Fix cutoff functions $\chi,\eta\in C_c^\infty(\R)$ such that
\begin{equation}\label{eq:cutoffs}
\chi\equiv1 \text{ on a neighborhood of } I,
\qquad
\supp \chi \subset J,
\qquad
\eta\equiv1 \text{ on } [-3,3].
\end{equation}

For a real-valued function $f$, set
\[
\mathcal N_N(f)=\Tr f(H_N)-\E \Tr f(H_N).
\]
For a function $f$ defined on a neighborhood of $[-2,2]$, define its Chebyshev coefficients by
\begin{equation}\label{eq:ck-def}
c_k(f)=\frac{1}{\pi}\int_{-\pi}^{\pi} f(2\cos\theta)\cos(k\theta)\,d\theta,
\qquad k\ge1.
\end{equation}
We then set
\begin{equation}\label{eq:Q-def}
Q(f)=\frac{1}{2}\sum_{k\ge1} k\, c_k(f)^2 + \frac{s_4}{2} c_2(f)^2
\end{equation}
and define
\begin{equation}\label{eq:V-def}
V(f,g)=\frac{1}{2}\bigl(Q(f+g)-Q(f)-Q(g)\bigr).
\end{equation}
Only the restriction of $f$ to $[-2,2]$ enters in \eqref{eq:ck-def}--\eqref{eq:V-def}.

The following theorem is a combination of \cite[Theorem~1.4 and Corollary~1.5]{LS22} and the variance estimate in \cite[(1.23)]{LS22}.
\begin{theorem}\label{thm:LS}
Let $H_N$ satisfy Assumption~\ref{assumption:hn}. Fix $\varepsilon>0$ and $\kappa>0$. Then there exists $C_{\varepsilon,\kappa}>0$ such that for every real-valued $\varphi\in H^{1/2+\varepsilon}(\R)$ supported in $(-2+\kappa,2-\kappa)$,
\begin{equation}\label{eq:LS-variance}
\Var\bigl(\mathcal N_N(\varphi)\bigr)
\le
C_{\varepsilon,\kappa}\|\varphi\|_{H^{1/2+\varepsilon}(\R)}^2.
\end{equation}
Moreover, $\mathcal N_N(\varphi)$ converges in law to a centered Gaussian random variable with variance $Q(\varphi)$.
\end{theorem}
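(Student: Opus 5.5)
Theorem~\ref{thm:LS} is a citation result, so the plan is a translation of \cite{LS22} into the present notation rather than an independent argument. First, I would check that Assumption~\ref{assumption:hn} implies the hypotheses of \cite[Definition~1.1]{LS22}. These are: independent entries up to symmetry, variance profile $N^{-1}$ off the diagonal and $2N^{-1}$ on it, finite moments of all orders (our \eqref{eq:wigner-moments}), and the cumulant matching \eqref{eq:wigner-cumulant} between diagonal and off-diagonal entries, which \cite{LS22} imposes for convenience. Smoothness of the densities is stronger than what is needed there, so it causes no difficulty. I would also note that a function $\varphi\in H^{1/2+\varepsilon}(\R)$ supported in $(-2+\kappa,2-\kappa)$ is exactly the bulk class treated in \cite[Theorem~1.4]{LS22}, with $\kappa$ playing the role of their distance to the edge.

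Second, for the variance bound \eqref{eq:LS-variance}, I would invoke \cite[(1.23)]{LS22}. It bounds $\Var(\mathcal N_N(\varphi))$ uniformly in $N$ by a constant, depending only on $\varepsilon$ and $\kappa$, times an $H^{1/2+\varepsilon}$-type norm of $\varphi$. The point to verify is that this constant depends on $\varphi$ only through its support constraint, and that their norm is dominated by our $\|\varphi\|_{H^{1/2+\varepsilon}(\R)}$ for our Fourier convention. If \cite{LS22} phrase the bound with a homogeneous seminorm, the inhomogeneous norm dominates it, so the bound transfers.

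Third, for the CLT, \cite[Theorem~1.4 and Corollary~1.5]{LS22} give convergence of $\mathcal N_N(\varphi)$ to a centered Gaussian. Its limiting variance is written in their normalization, either as a double integral $\frac{1}{2\pi^2}\iint\bigl(\frac{\varphi(x)-\varphi(y)}{x-y}\bigr)^2\frac{4-xy}{\sqrt{4-x^2}\sqrt{4-y^2}}\,dx\,dy$ plus a fourth-cumulant term, or in Chebyshev form. I would show that this equals $Q(\varphi)$ from \eqref{eq:Q-def}, using the substitution $x=2\cos\theta$ and expanding $\varphi(2\cos\theta)$ in $\cos(k\theta)$. The kernel then diagonalizes with eigenvalue proportional to $k$, giving the $\frac12\sum_k k\,c_k(\varphi)^2$ part. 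The cumulant correction is proportional to $\bigl(\int\varphi\,\widetilde T_2\,d\mu_{\mathrm{arcsine}}\bigr)^2$, which becomes $\frac{s_4}{2}c_2(\varphi)^2$. The case $s_4=0$ (the GOE case) fixes all normalizing constants, since it must reproduce the classical real-symmetric formula \cite[Eq.~(1.5)]{bao2016clt}.

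The main obstacle is this bookkeeping of normalizations: the symmetry-class factor $2$ versus $1$, and the conventions for $c_k$ and $s_4$ under the diagonal scaling \eqref{eq:wigner-cumulant}. I would resolve it by cross-checking against a polynomial test function, for example $\varphi(x)=x^2$ localized by the cutoffs \eqref{eq:cutoffs}, whose variance can be computed directly.
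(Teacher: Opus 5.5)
Your proposal is correct and follows the paper's route: the paper gives no independent argument and simply cites \cite[Theorem~1.4, Corollary~1.5 and (1.23)]{LS22}, with Assumption~\ref{assumption:hn} arranged to meet \cite[Definition~1.1]{LS22}. Your normalization bookkeeping goes beyond the paper but is consistent, since $Q(x)=2$ and $Q(x^2)=4+2s_4$ agree with direct moment computations; the one caveat is that $\eta(x)x^2$ is not bulk-supported, so that cross-check calibrates the common Chebyshev variance formula through the smooth CLT (cf.\ \cite[Appendix~E]{LS22}) rather than through Theorem~\ref{thm:LS} itself.
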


The next theorem is an immediate consequence of 
\cite[Theorem~1]{SW13}.  We point out that the main difference compared to the previous statement is that in the following theorem the test functions are not necessarily supported in the bulk of the spectrum. 
\begin{theorem}\label{thm:SW}
Let $H_N$ satisfy Assumption~\ref{assumption:hn}. For every $\delta>0$ there exists $C_\delta>0$ such that for every real-valued $\varphi\in H^{1+\delta}(\R)$,
\begin{equation}\label{eq:SW-variance}
\Var\bigl(\mathcal N_N(\varphi)\bigr)
\le
C_\delta \|\varphi\|_{H^{1+\delta}(\R)}^2.
\end{equation}
\end{theorem}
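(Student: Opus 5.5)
The statement is Theorem~\ref{thm:SW}, which the paper calls an immediate consequence of \cite[Theorem~1]{SW13}. My plan is therefore to translate the hypotheses and the norm in \cite{SW13} into the present setting, and then reduce a general $\varphi\in H^{1+\delta}(\R)$ to the class of test functions covered there.

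\textbf{Step 1: matching hypotheses.} I would first check that Assumption~\ref{assumption:hn} implies the assumptions of \cite[Theorem~1]{SW13}. These are independent centered entries with variance $N^{-1}$ off the diagonal and bounded higher moments (in fact only a finite fourth moment is needed there). Our setting supplies this through \eqref{eq:wigner-moments} and the normalization of $\chi_{\mathrm{od}},\chi_{\dif}$. The diagonal variance $2N^{-1}$ is allowed, since \cite{SW13} treats general diagonal variances. The conclusion of \cite{SW13} is a bound of the form
\[
\Var(\mathcal N_N(\varphi))\le C\|\varphi\|_{\mathcal H_s}^2 ,\qquad s>1,
\]
where $\|\varphi\|_{\mathcal H_s}^2=\int(1+2|\xi|)^{2s}|\widehat\varphi(\xi)|^2\,d\xi$, up to the Fourier normalization, and $C$ is uniform in $N$.

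\textbf{Step 2: comparing norms.} Next I would reconcile that norm with the paper's convention $\|u\|_{H^s}^2=\int(1+|\xi|^2)^s|\widehat u|^2$. The inequality $(1+2|\xi|)^{2s}\le C_s(1+|\xi|^2)^s$ shows that the \cite{SW13} norm of index $s$ is bounded by $C\|\varphi\|_{H^{s}}$. Since the exponent in the paper's norm is $s$ while the weight in \cite{SW13} is squared, I need to pin down exactly what index \cite{SW13} requires. Here $s=1+\delta$ corresponds precisely to the paper's $H^{1+\delta}$, which gives \eqref{eq:SW-variance}.

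\textbf{Step 3: reduction for general $\varphi$.} If \cite{SW13} states the bound only for $\varphi$ that are bounded or smooth, I would approximate. Take mollified, truncated $\varphi_n\to\varphi$ in $H^{1+\delta}$. Since $H^{1+\delta}\subset C_b$, the traces converge almost surely for fixed $N$, and the variances converge by dominated convergence using $|\Tr\varphi|\le N\|\varphi\|_\infty$. No cutoff $\eta$ is needed, because the bound is global in the spectrum.

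\textbf{Main obstacle.} I expect Step~2 to be the main difficulty: nailing down the precise norm and Fourier convention in \cite{SW13}, where the weight may be written as $(1+2|k|)^{2s}$ with a $2\pi$ factor. The goal is to make sure the required smoothness is exactly $1+\delta$ and not something weaker or stronger. The remaining steps are routine.
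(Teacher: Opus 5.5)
Your proposal is correct and follows the paper, which likewise obtains this statement directly from \cite[Theorem~1]{SW13}. Your norm comparison is sound, and so is the density step, which uses $\|\varphi\|_\infty\le C\|\varphi\|_{H^{1+\delta}(\R)}$ and $|\Tr\varphi(H_N)|\le N\|\varphi\|_\infty$. For the hypothesis check, rely on the sub-Gaussian tail \eqref{eq:wigner-smooth-tail} and on \eqref{eq:wigner-moments}, which cover any moment or decay condition imposed in \cite{SW13}, rather than on your unverified remark that a finite fourth moment suffices there.
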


We also need the following CLT for smooth test functions; see \cite{LP09,Sh11}. We note that for  smooth compactly supported test functions,  the limiting covariance agrees with the Chebyshev formula \eqref{eq:Q-def}; see \cite[Appendix~E]{LS22}.

\begin{proposition}\label{prop:smooth-clt}
Fix  $f\in C_c^\infty(\R)$. Then $\mathcal N_N(f)$ 
converges in law to a mean-zero Gaussian with variance $V(f,f)$.
\end{proposition}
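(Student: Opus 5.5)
This follows from the Lytova--Pastur and Shcherbina central limit theorems for smooth linear statistics \cite{LP09,Sh11}, once we check that their hypotheses hold under Assumption~\ref{assumption:hn} and that their limiting variance coincides with $V(f,f)$ from \eqref{eq:V-def}. I would proceed as follows.

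\textbf{Step 1: hypotheses.} Under Assumption~\ref{assumption:hn} the entries are independent up to symmetry and centered. The off-diagonal variance is $N^{-1}$ and the diagonal variance is $2N^{-1}$. By \eqref{eq:wigner-moments} all moments are uniformly bounded, and in particular fourth moments are finite. These are exactly the hypotheses of the smooth-function CLT in \cite{LP09,Sh11}. Any $f\in C_c^\infty(\R)$ has a Fourier transform decaying faster than any polynomial, so $f$ lies in the test-function class $\int(1+|\xi|)^{5}|\widehat f(\xi)|\,d\xi<\infty$ used there. Hence $\mathcal N_N(f)$ converges in law to a centered Gaussian. Its variance is given by the Lytova--Pastur formula, which is expressed through $f$ and $f'$ on $[-2,2]$, the semicircle density, and the fourth cumulant $s_4$.

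\textbf{Step 2: identifying the variance.} I would expand $f(2\cos\theta)=\tfrac{c_0}{2}+\sum_{k\ge1}c_k(f)\cos(k\theta)$, with $c_k(f)$ as in \eqref{eq:ck-def}. Substituting this into the Lytova--Pastur double-integral kernel reduces the main term to $\tfrac12\sum_k k\,c_k(f)^2$, via the classical identity that diagonalizes the kernel $\log$-type form in the Chebyshev basis. The cumulant term is $\frac{s_4}{2\pi^2}\bigl(\int_{-2}^2 f(x)\frac{2-x^2}{\sqrt{4-x^2}}dx\bigr)^2$. Substituting $x=2\cos\theta$ turns the integrand into a multiple of $\cos 2\theta$, so this term equals $\frac{s_4}{2}c_2(f)^2$. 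The constants depend on the diagonal normalization $\E\chi_d^2=2$, which makes the diagonal-variance correction vanish. Together this gives $Q(f)=V(f,f)$ by \eqref{eq:Q-def} and \eqref{eq:V-def}. For this identification I would simply cite \cite[Appendix~E]{LS22}, which carries out the comparison for compactly supported smooth functions.

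\textbf{Main obstacle.} The only real difficulty is bookkeeping: reconciling normalizations across references. This means matching the diagonal variance and the real-symmetric factors, and confirming that the $s_4$ term attaches only to $c_2$. The latter uses the cumulant matching \eqref{eq:wigner-cumulant}, which ensures the diagonal fourth cumulant produces no additional term.
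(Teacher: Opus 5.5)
Your proposal matches the paper, which likewise invokes the Lytova--Pastur/Shcherbina CLT \cite{LP09,Sh11} and cites \cite[Appendix~E]{LS22} to identify the limiting variance with the Chebyshev form \eqref{eq:Q-def}; your explicit check that the $s_4$ term becomes $\frac{s_4}{2}c_2(f)^2$ under $x=2\cos\theta$ is correct. One small correction: the diagonal fourth cumulant drops out because there are only $N$ diagonal entries (their contribution to the variance is $\cO(N^{-1})$), not because of \eqref{eq:wigner-cumulant}, which the paper assumes only for compatibility with \cite{LS22}.
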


The following proposition follows from the rigidity estimate
\cite[Theorem~2.2]{erdHos2012rigidity} for \eqref{eq:norm-tail}, and from
standard non-asymptotic bounds for the operator norm of symmetric random
matrices with  sub-Gaussian entries, e.g.\
\cite[Theorem 4.4.3]{vershynin2018high}, for \eqref{eq:norm-moment}.

\begin{proposition}\label{prop:norm-tail}
There exist constants $c,C>0$ such that
\begin{equation}\label{eq:norm-tail}
\Pbb(\|H_N\|\ge 3)\le C e^{-c(\log N)^2},
\qquad N\ge1.
\end{equation}
Moreover, for every $p\ge1$,
\begin{equation}\label{eq:norm-moment}
\sup_{N\ge1}\E \|H_N\|^p < \infty.
\end{equation}
\end{proposition}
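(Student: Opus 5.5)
We prove the two estimates of Proposition~\ref{prop:norm-tail} separately, both by reduction to the cited results.

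For \eqref{eq:norm-tail} we use the rigidity estimate of \cite[Theorem~2.2]{erdHos2012rigidity}. Under Assumption~\ref{assumption:hn} the entries have sub-Gaussian tails by \eqref{eq:wigner-smooth-tail}, so the hypotheses of that theorem hold. It gives constants $c,C>0$ such that, outside an event of probability at most $Ce^{-c(\log N)^{c\log\log N}}$, every eigenvalue satisfies $|\lambda_j-\gamma_j|\le (\log N)^{C\log\log N}N^{-2/3}\min(j,N+1-j)^{-1/3}$. Here $\gamma_j\in[-2,2]$ are the classical locations of $\scdens$. On this event $\|H_N\|\le 2+(\log N)^{C\log\log N}N^{-2/3}$, which is less than $3$ for $N\ge N_0$. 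The exceptional probability $Ce^{-c(\log N)^{c\log\log N}}$ is at most $C'e^{-c(\log N)^2}$ once $N$ is large enough that $c\log\log N\ge 2$. For the finitely many remaining $N$ we simply enlarge $C$ so that $Ce^{-c(\log N)^2}\ge1$, which makes the bound trivial there.

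For \eqref{eq:norm-moment} we write $H_N=N^{-1/2}A$, where $A$ is symmetric with independent (up to symmetry) centered entries. The $\psi_2$-norm of each entry of $A$ is bounded uniformly in $N$, again by \eqref{eq:wigner-smooth-tail}. The symmetric version of \cite[Theorem~4.4.3]{vershynin2018high} (Corollary~4.4.8 there) gives $\|A\|\le CK(\sqrt N+t)$ with probability at least $1-4e^{-t^2}$, where $K$ bounds the $\psi_2$-norms. Setting $t=u\sqrt N$ yields $\Pbb(\|H_N\|\ge CK(1+u))\le 4e^{-u^2N}\le 4e^{-u^2}$. Integrating this tail, $\E\|H_N\|^p=\int_0^\infty p x^{p-1}\Pbb(\|H_N\|>x)\,dx$ is bounded by a constant depending only on $p$ and $K$, uniformly in $N$.

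There is no serious obstacle here. The only point to check is that the entry assumptions required by the cited results follow from Assumption~\ref{assumption:hn}. The sub-Gaussian decay needed by both results comes from \eqref{eq:wigner-smooth-tail}, and the entries are centered with the normalized variances stated there. The diagonal entries have variance $2/N$ rather than $1/N$, but this only changes constants in both cited bounds.
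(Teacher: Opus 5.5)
Your proposal is correct and follows the same route as the paper, which simply cites the Erd\H{o}s--Yau--Yin rigidity theorem for \eqref{eq:norm-tail} and Vershynin's operator-norm bound for symmetric matrices with sub-Gaussian entries for \eqref{eq:norm-moment}. You supply the routine details the paper omits: comparing $(\log N)^{c\log\log N}$ with $(\log N)^2$, absorbing small $N$ into $C$, and integrating the tail. One small precision on the remark about the diagonal: the rigidity theorem is stated for generalized Wigner matrices with $\sum_j \E|H_{ij}|^2 = 1$ exactly, whereas here this sum equals $1+1/N$. So rather than saying the diagonal variance "changes constants," apply the theorem to $\sqrt{N/(N+1)}\,H_N$, which changes the norm bound only by a factor $1+O(1/N)$.
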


\section{Covariance Calculations}

In this section, we establish facts about the covariance forms $V_{\mathrm{log}}$ and $V_{\mathrm{cnt}}$ that will be used in the proofs of the main theorems. In Section~\ref{s:kernelcontinuity}, we show that these forms  extend continuously from $C_c^\infty(I)$ to $H_0^s(I)$. 
In Section~\ref{s:logfieldcovariance}, we show that $V_{\mathrm{log}}$ can be written in terms of the covariance form $V$ from \eqref{eq:V-def}, and we make an analogous argument for $V_{\mathrm{cnt}}$ in Section~\ref{s:countfieldcovariance}.

\subsection{Continuity of the kernels on Sobolev spaces}\label{s:kernelcontinuity}

\begin{lemma}\label{lem:log-operator}
Let $\chi$ be the function from \eqref{eq:cutoffs}. Fix $0<\varepsilon<1/2$. There exists $C_{\varepsilon,\chi,I}>0$ such that for every $f\in H^{-1/2+\varepsilon}(I)$,
\begin{equation}\label{eq:log-operator-bound}
\left\|
\chi(\cdot)\int_I f(E)\log|\cdot-E|\,dE
\right\|_{H^{1/2+\varepsilon}(\R)}
\le
C_{\varepsilon,\chi,I}\|f\|_{H^{-1/2+\varepsilon}(I)}.
\end{equation}
\end{lemma}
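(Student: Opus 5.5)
The plan is to reduce to a Fourier-side estimate for the logarithmic potential, after first making sense of $f$ as a compactly supported distribution on $\R$. Given $f\in H^{-1/2+\varepsilon}(I)$, start with $f\in L^2(I)$ (dense by definition of $H^{-s}(I)$) and extend by zero to $\tilde f$ on $\R$. The first step is the bound
\[
\|\tilde f\|_{H^{-1/2+\varepsilon}(\R)}\le C\|f\|_{H^{-1/2+\varepsilon}(I)}.
\]
Since $1/2-\varepsilon\in(0,1/2)$, the space $H_0^{1/2-\varepsilon}(I)$ defined via the Dirichlet basis coincides, with equivalent norms, with the restrictions to $I$ of $H^{1/2-\varepsilon}(\R)$ functions. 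This uses interpolation between $L^2(I)$ and $H^1_0(I)$; below order $1/2$ no boundary condition is imposed, and one can invoke Lions–Magenes. Duality then gives
\[
\|\tilde f\|_{H^{-1/2+\varepsilon}(\R)}=\sup_{\|g\|_{H^{1/2-\varepsilon}(\R)}\le1}\Big|\int_I fg\Big|\lesssim \sup_{\|g|_I\|_{H_0^{1/2-\varepsilon}(I)}\lesssim 1}\Big|\int_I fg\Big|\le \|f\|_{H^{-1/2+\varepsilon}(I)}.
\]

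Next, write $F=\tilde f*\ell$ with $\ell(x)=\log|x|$, so the function in \eqref{eq:log-operator-bound} is $\chi F$. The obstacle is that $\ell$ is not integrable at infinity. Since $\chi$ is supported in $J$ and $\tilde f$ in $I$, only $|x-E|\le D:=\operatorname{diam}(J\cup I)$ matters. So replace $\ell$ by $\ell_\theta=\theta\,\ell$, with $\theta\in C_c^\infty(\R)$ equal to $1$ on $[-D,D]$; this does not change $\chi F$. Now $\ell_\theta$ is a compactly supported distribution, and its Fourier transform is smooth with $|\widehat{\ell_\theta}(\xi)|\le C(1+|\xi|)^{-1}$. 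Indeed, away from the origin $\ell_\theta$ is smooth, and near the origin $\widehat{\log|x|}$ is $-\pi/|\xi|$ plus a multiple of $\delta$ in the distributional sense. Convolving with the Schwartz function $\widehat\theta$ yields the decay $|\xi|^{-1}$ at infinity and boundedness near $0$. Checking this decay cleanly is the main technical step. An alternative is to note that $\ell_\theta'$ is $\pv(1/x)\theta$ plus a smooth compactly supported term, whose Fourier transform is bounded, and then divide by $\xi$ for large $|\xi|$.

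Finally,
\[
\|\tilde f*\ell_\theta\|_{H^{1/2+\varepsilon}}^2=\int(1+\xi^2)^{1/2+\varepsilon}|\widehat{\tilde f}|^2|\widehat{\ell_\theta}|^2\,d\xi\le C\int(1+\xi^2)^{-1/2+\varepsilon}|\widehat{\tilde f}|^2\,d\xi=C\|\tilde f\|^2_{H^{-1/2+\varepsilon}(\R)}.
\]
Multiplication by $\chi\in C_c^\infty$ is bounded on $H^{1/2+\varepsilon}(\R)$, which gives \eqref{eq:log-operator-bound} for $f\in L^2(I)$. For general $f$, use density: define the operator by continuous extension, which agrees with the distributional pairing $\langle f,\log|x-\cdot|\rangle$ since $\chi(x)\log|x-\cdot|$ restricted to $I$ lies in $H_0^{1/2-\varepsilon}(I)$ locally uniformly. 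The main obstacle I expect is the identification of $H_0^{1/2-\varepsilon}(I)$ with restrictions of $H^{1/2-\varepsilon}(\R)$, i.e.\ the extension-by-zero step. If I wanted to avoid citing interpolation theory, I would prove directly that multiplication by $\1_I$ is bounded on $H^{\sigma}(\R)$ for $\sigma<1/2$. I would then compare the two norms on the sine basis, viewing $\sin$ modes as odd periodic extensions.
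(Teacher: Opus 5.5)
Your proposal is correct and follows essentially the same route as the paper. Both arguments bound the zero extension via $H^{r}(I)=H_0^{r}(I)$ for $r=1/2-\varepsilon<1/2$ and duality (the paper phrases this as the adjoint of the restriction map), truncate $\log|\cdot|$ by a cutoff equal to one on the relevant difference set, obtain $|\widehat{k}(\xi)|\lesssim\langle\xi\rangle^{-1}$ from $k'=\zeta'\log|y|+\zeta\,\pv(1/y)$ (your "alternative"), and finish with Plancherel and boundedness of multiplication by $\chi$; your extra density step from $L^2(I)$ is a harmless, slightly more careful addition.
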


\begin{proof}

Let $r =1/2-\varepsilon$. Since $r<1/2$, we have
$H^r(I)=H_0^r(I)$ with equivalent norms \cite[(3.5a)]{ern2021finite}.
Let
\[
    R_I:H^r(\mathbb R)\to H^r(I)=H_0^r(I)
\]
be the restriction map, which is bounded \cite[(2.13)]{nochetto2015pde}. Hence its adjoint defines a bounded map
\[
    R_I^*:H^{-r}(I)=(H_0^r(I))'\to (H^r(\mathbb R))'=H^{-r}(\mathbb R).
\]
For \(f\in H^{-r}(I)\), we denote \(R_I^*f\) by \(\widetilde f\).  It is precisely the zero extension of \(f\) to \(\mathbb R\), and therefore
\(f\mapsto \widetilde f\) is bounded from
\(H^{-1/2+\varepsilon}(I)\) to \(H^{-1/2+\varepsilon}(\mathbb R)\).

Choose $\zeta\in C_c^\infty(\R)$ such that $\zeta\equiv1$ on the compact set
\[
\supp\chi-I=\{x-E:x\in \supp\chi,\ E\in I\}.
\]
Set $k(y)=\zeta(y)\log|y|$. Then
\[
\chi(x)\int_I f(E)\log|x-E|\,dE
=
\chi(x)(k*\widetilde f)(x).
\]
We claim that
\begin{equation}\label{eq:k-symbol}
|\widehat{k}(\xi)|\le C\langle \xi\rangle^{-1},
\qquad
\langle \xi\rangle=(1+\xi^2)^{1/2}.
\end{equation}
Indeed, $k\in L^1(\R)$, so $\widehat{k}\in L^\infty(\R)$. Moreover,
\[
k'(y)=\zeta'(y)\log|y|+\zeta(y)\,\pv\frac{1}{y}
\]
as distributions. The first term belongs to $L^1(\R)$, and the second has bounded Fourier transform because
\[
\widehat{\pv(1/y)}(\xi)=-i\pi \sgn(\xi).
\]
Thus $\widehat{k'}\in L^\infty(\R)$, and since $\widehat{k'}(\xi)=i\xi \widehat{k}(\xi)$, we obtain \eqref{eq:k-symbol}.

By Plancherel's theorem,
\begin{align*}
\|k*\widetilde f\|_{H^{1/2+\varepsilon}(\R)}^2
&=
\int_\R \langle \xi\rangle^{1+2\varepsilon}
|\widehat{k}(\xi)|^2
|\widehat{\widetilde f}(\xi)|^2\,d\xi
\\
&\le
C\int_\R \langle \xi\rangle^{-1+2\varepsilon}
|\widehat{\widetilde f}(\xi)|^2\,d\xi
\\
&\le
C\|\widetilde f\|_{H^{-1/2+\varepsilon}(\R)}^2.
\end{align*}
Multiplication by the fixed smooth cutoff $\chi$ is bounded on $H^{1/2+\varepsilon}(\R)$, so \eqref{eq:log-operator-bound} follows.
\end{proof}

\begin{proposition}\label{prop:covariance-continuity}
For every $s>0$, the kernel formulas defining $V_{\mathrm{log}}$ and
$V_{\mathrm{cnt}}$ extend
uniquely from $\operatorname{span}\{e_k:k\ge1\}$ to continuous bilinear forms on $H_0^s(I)$. These extensions agree
with \eqref{eq:Vlog} and its counting analogue on $C_c^\infty(I)$.
\end{proposition}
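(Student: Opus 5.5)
The plan is to show that each kernel defines a bounded bilinear form on $H^{-1/2+\varepsilon}(I)\times H^{-1/2+\varepsilon}(I)$ for small $\varepsilon>0$. We then use the continuous embedding $H_0^s(I)\hookrightarrow L^2(I)\hookrightarrow H^{-1/2+\varepsilon}(I)$: from the series definitions, $\|h\|_{H^{-1/2+\varepsilon}(I)}\le \|h\|_{L^2(I)}\le\|h\|_{H_0^s(I)}$, since $(1+\mu_k)^{-1/2+\varepsilon}\le1\le(1+\mu_k)^s$. This handles every $s>0$ at once. Since $\operatorname{span}\{e_k\}$ is dense in $H_0^s(I)$ by definition, a form bounded in the weaker norm extends uniquely and continuously. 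It remains to identify the extension with the kernel integral on $C_c^\infty(I)$. Such $\phi$ lie in $H_0^s(I)$, and their partial sums $P_M\phi\to\phi$ in $H_0^s(I)$. Moreover, for $\phi,\psi\in L^2(I)$ the double integral is continuous in the $L^2$ topology, because each kernel lies in $L^2(I\times I)$ (the log singularity is square integrable). So the two definitions agree.

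For the logarithmic part, with $\phi,\psi\in\operatorname{span}\{e_k\}$, I will write
\[
\iint \phi(E)(-\log|E-E'|)\psi(E')\,dE\,dE'=-\int_I \psi(E')\,\chi(E')\int_I\phi(E)\log|E'-E|\,dE\,dE',
\]
using $\chi\equiv1$ on $I$. Next I view this as the pairing of $\widetilde\psi\in H^{-1/2-\varepsilon}(\R)$ with $\chi\,(\log*\widetilde\phi)\in H^{1/2+\varepsilon}(\R)$. To avoid losing regularity, it is cleaner to apply Lemma~\ref{lem:log-operator} with $f=\phi\in H^{-1/2+\varepsilon}(I)$, which gives an $H^{1/2+\varepsilon}(\R)$ bound. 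Then I use that $\widetilde\psi$, the zero extension of $\psi$, is bounded in $H^{-1/2+\varepsilon}(\R)\subset H^{-1/2-\varepsilon}(\R)$, via the bounded map $R_I^*$ as in the proof of Lemma~\ref{lem:log-operator}. Duality between $H^{1/2+\varepsilon}(\R)$ and $H^{-1/2-\varepsilon}(\R)$ then yields
\[
|\text{form}|\le C\|\phi\|_{H^{-1/2+\varepsilon}(I)}\|\psi\|_{H^{-1/2+\varepsilon}(I)}\le C\|\phi\|_{H_0^s}\|\psi\|_{H_0^s}.
\]
In fact, since $\phi,\psi\in L^2$, the cruder Cauchy--Schwarz bound via the Hilbert--Schmidt norm $\|\log|E-E'|\|_{L^2(I\times I)}<\infty$ already gives $|\text{form}|\le C\|\phi\|_{L^2}\|\psi\|_{L^2}$, which suffices here. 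I will present this simpler argument and keep the Lemma route as the sharper alternative.

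The remaining pieces are smooth kernels on $I\times I$. The term $(2-E^2)(2-E'^2)$ is a rank-one product of smooth functions. The counting kernel has $-\pi^{-2}\log|E-E'|$, handled as above. It also has $\pi^{-2}\log\bigl((4-EE'+\sqrt{4-E^2}\sqrt{4-E'^2})/2\bigr)$, which is smooth on $I\times I$ because $I\Subset(-2,2)$: the argument stays bounded below by a positive constant, since $4-EE'>0$ there. Finally there is a rank-one product term. All of these are bounded on $I\times I$ and hence in $L^2(I\times I)$, so Cauchy--Schwarz controls them in $L^2(I)$, and thus in $H_0^s(I)$.

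The main obstacle, though it is mild, is checking that the argument of the logarithm in $K_{\mathrm{cnt}}$ stays positive uniformly on $I\times I$, and that the identification on $C_c^\infty(I)$ is legitimate, i.e.\ that $C_c^\infty((a,b))\subset H_0^s(I)$ with convergent sine expansions. The latter holds because a compactly supported smooth function extends to an odd, $2L$-periodic smooth function, so its sine coefficients decay rapidly.
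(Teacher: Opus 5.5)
Your proof is correct, but your main argument takes a different and more elementary route than the paper's. The paper bounds the logarithmic part by Cauchy--Schwarz, with the $L^2(I)$ norm on one factor and Lemma~\ref{lem:log-operator} on the other. This gives $C\|\phi\|_{L^2(I)}\|\psi\|_{H^{-1/2+\varepsilon}(I)}$, and the paper then invokes $H_0^s(I)\hookrightarrow L^2(I)$ and $H_0^s(I)\hookrightarrow H^{-1/2+\varepsilon}(I)$.

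You instead observe that $\log|E-E'|\in L^2(I\times I)$. Every piece of both kernels is therefore Hilbert--Schmidt, and both forms are bounded on $L^2(I)\times L^2(I)$ outright. The paper's bound still requires the $L^2$ norm on one factor, so the Fourier-analytic lemma gains nothing for this particular statement. The paper uses it because the same lemma is needed later for the variance estimate on $\mathcal N_N(Be_k)$.

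Your route also makes the identification on $C_c^\infty(I)$ transparent. The extension is just the kernel integral restricted to $H_0^s(I)\subset L^2(I)$, a point the paper leaves implicit and you justify via $P_M\phi\to\phi$ and $L^2$-continuity.

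Your alternative duality argument is also valid: apply Lemma~\ref{lem:log-operator} to $\phi$ and pair with the zero extension $\widetilde\psi\in H^{-1/2+\varepsilon}(\R)$. It gives a strictly stronger bound, on $H^{-1/2+\varepsilon}(I)\times H^{-1/2+\varepsilon}(I)$. That is the natural regularity for a log-correlated covariance and more than the paper proves, though it is not needed here.

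Your remaining checks are fine. The positivity of the logarithm's argument in $K_{\mathrm{cnt}}$ follows from $4-EE'>0$ on $I\times I$. The inclusion $C_c^\infty((a,b))\subset H_0^s(I)$ follows from the smooth odd $2L$-periodic extension.
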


\begin{proof}
Let $\mathcal S=\operatorname{span}\{e_k:k\ge1\}$, which is dense in
$H_0^s(I)$ by definition. We prove the required bound on $\mathcal S$.
Let $0<\varepsilon<1/2$. Since $\chi\equiv1$ on a neighborhood of $I$, for
$\phi,\psi\in\mathcal S$ we have
\[
\iint_{I\times I}\phi(E)\log|E-E'|\psi(E')\,dE\,dE'
=
\int_I \phi(E)\left(\chi(E)\int_I \psi(E')\log|E-E'|\,dE'\right)dE.
\]
Hence, by Cauchy--Schwarz and Lemma \ref{lem:log-operator}, and the fact that the $H^{1/2+\varepsilon}(\R)$ norm dominates the $L^2(\R)$ norm,
\begin{align*}
\left|
\iint_{I\times I}\phi(E)\log|E-E'|\psi(E')\,dE\,dE'
\right|
&\le
\|\phi\|_{L^2(I)}
\left\|
\chi\int_I \psi(E')\log|\cdot-E'|\,dE'
\right\|_{L^2(\R)}
\\
&\le
C
\|\phi\|_{L^2(I)}
\|\psi\|_{H^{-1/2+\varepsilon}(I)}.
\end{align*}
Since $s>0$ and $-1/2+\varepsilon < s$, the Sobolev embeddings
\[
H_0^s(I)\hookrightarrow L^2(I),
\qquad
H_0^s(I)\hookrightarrow H^{-1/2+\varepsilon}(I)
\]
are continuous. Therefore, combining the previous displays,
\[
\left|
\iint_{I\times I}\phi(E)\log|E-E'|\psi(E')\,dE\,dE'
\right|
\le
C_s
\|\phi\|_{H_0^s(I)}
\|\psi\|_{H_0^s(I)}.
\]
This proves continuity of the logarithmic singular part.

The remaining pieces of $K_{\mathrm{log}}$ and $K_{\mathrm{cnt}}$ are smooth on $I\times I$ or rank-one kernels with smooth factors. Such kernels define continuous bilinear forms on $H_0^s(I)$ for every $s>0$. The same conclusion therefore holds for $V_{\mathrm{log}}$ and $V_{\mathrm{cnt}}$.
\end{proof}

\subsection{The log field covariance}
\label{s:logfieldcovariance}
For $\phi\in L^\infty(I)$ define
\begin{equation}\label{eq:def-Fphi}
F_\phi(x)=\int_I \phi(E)\log|E-x|\,dE.
\end{equation}

The following lemma is classical (see, e.g., \cite[Exercises 1.4 q.4]{forrester2010log}), and we include the proof for completeness. 
\begin{lemma}\label{lem:logseries}
Let $\alpha\in(0,\pi)$. Then for almost every $\theta\in(-\pi,\pi)$,
\begin{equation}\label{eq:logseries}
\log|2\cos\alpha-2\cos\theta|
=
-2\sum_{k\ge1}\frac{\cos(k\alpha)\cos(k\theta)}{k},
\end{equation}
where the series converges in $L^2(-\pi,\pi)$ and locally uniformly away from $\theta=\pm\alpha$.
\end{lemma}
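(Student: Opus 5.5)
The plan is to start from the factorization $2\cos\alpha-2\cos\theta = -4\sin\frac{\alpha+\theta}{2}\sin\frac{\theta-\alpha}{2}$, which gives
\[
\log|2\cos\alpha-2\cos\theta|
=\log\bigl|2\sin\tfrac{\theta+\alpha}{2}\bigr|+\log\bigl|2\sin\tfrac{\theta-\alpha}{2}\bigr| .
\]
This reduces the claim to the classical Fourier expansion
\[
\log|2\sin(u/2)| = -\sum_{k\ge1}\frac{\cos(ku)}{k},\qquad u\notin 2\pi\mathbb Z .
\]
Applying it with $u=\theta+\alpha$ and $u=\theta-\alpha$ and adding, the identity $\cos(k(\theta+\alpha))+\cos(k(\theta-\alpha))=2\cos(k\alpha)\cos(k\theta)$ produces exactly the right-hand side of \eqref{eq:logseries}.

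To prove the one-variable expansion I would use $\log|1-e^{iu}| = \log|2\sin(u/2)|$ together with $\log|1-z| = \operatorname{Re}\log(1-z) = -\operatorname{Re}\sum_{k\ge1} z^k/k$. This holds for $|z|<1$, so I would set $z=re^{iu}$ and obtain $\log|1-re^{iu}| = -\sum_k r^k\cos(ku)/k$. The limit $r\uparrow1$ is then handled by two separate mechanisms, one for each mode of convergence.

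\emph{Locally uniform convergence away from $u\in2\pi\mathbb Z$.} The series $\sum_k \cos(ku)/k$ converges uniformly on compact subsets of $(0,2\pi)$ by Dirichlet's test, because the partial sums $\sum_{k\le n}\cos(ku)$ are bounded there by $1/|\sin(u/2)|$. Abel's theorem then identifies its sum with $\lim_{r\uparrow1}\bigl(-\log|1-re^{iu}|\bigr)=-\log|1-e^{iu}|$.

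\emph{$L^2$ convergence.} The coefficients $1/k$ are square-summable, so the series converges in $L^2(-\pi,\pi)$ to some limit $g$. A subsequence of the partial sums converges almost everywhere to $g$, so $g$ agrees a.e. with the pointwise sum, i.e. with $\log|2\sin(u/2)|$. This function is itself in $L^2$, since a logarithmic singularity is square-integrable.

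For the two-variable series, the $L^2$ convergence follows because each translated series converges in $L^2$ and translation is an isometry on $L^2$ of the circle. The uniform convergence holds on compact sets avoiding $\theta+\alpha,\theta-\alpha\in 2\pi\mathbb Z$, which within $(-\pi,\pi)$ and for $\alpha\in(0,\pi)$ means exactly $\theta\neq\pm\alpha$.

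The only delicate step is the boundary passage $r\uparrow1$. I would handle it with Dirichlet plus Abel as above, or alternatively by directly computing the Fourier coefficients $\frac1\pi\int_{-\pi}^{\pi}\log|2\sin(u/2)|\cos(ku)\,du=-1/k$ via integration by parts, using $\frac{d}{du}\log|2\sin(u/2)|=\frac12\cot(u/2)$ and the known integral $\int_0^{\pi}\log\sin u\,du=-\pi\log2$ for $k=0$.
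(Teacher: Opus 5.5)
Your proposal is correct and follows the paper's proof: it uses the same product-to-sum factorization (your version flips the sign of $\sin\frac{\theta-\alpha}{2}$, which is harmless inside the absolute value) combined with the classical expansion $-\log|2\sin(t/2)|=\sum_{k\ge1}\cos(kt)/k$. The only difference is that you also justify that classical expansion, using Dirichlet's test and Abel's theorem for the locally uniform convergence and square-summability of the coefficients for the $L^2$ convergence, whereas the paper simply quotes it.
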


\begin{proof}
Recall the classical Fourier series
\[
-\log\big(2\big|\sin(t/2)\big|\big)=\sum_{k\ge1}\frac{\cos(kt)}{k},
\qquad t\in(0,2\pi),
\]
which holds in $L^2(0,2\pi)$ and pointwise away from multiples of $2\pi$. Since
\[
2\cos\alpha-2\cos\theta
=
-4\sin\frac{\alpha+\theta}{2}\sin\frac{\alpha-\theta}{2},
\]
we obtain
\begin{align*}
\log|2\cos\alpha-2\cos\theta|
&=
\log\left(2\left|\sin\frac{\alpha+\theta}{2}\right|\right)
+
\log\left(2\left|\sin\frac{\alpha-\theta}{2}\right|\right)
\\
&=
-\sum_{k\ge1}\frac{\cos(k(\alpha+\theta))}{k}
-\sum_{k\ge1}\frac{\cos(k(\alpha-\theta))}{k}
\\
&=
-2\sum_{k\ge1}\frac{\cos(k\alpha)\cos(k\theta)}{k}.
\qedhere
\end{align*}
\end{proof}

\begin{lemma}\label{lem:log-pointcoeff}
Let $E\in(-2,2)$ and choose $\alpha\in(0,\pi)$ such that $E=2\cos\alpha$. Define $F_E(x)=\log|E-x|$. Then, for every $k\ge1$,
\begin{equation}\label{eq:pointcoeff-log}
c_k(F_E)=-\frac{2}{k}\cos(k\alpha).
\end{equation}
\end{lemma}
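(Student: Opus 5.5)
The coefficient formula \eqref{eq:pointcoeff-log} follows directly from Lemma~\ref{lem:logseries}, by reading off Fourier--cosine coefficients. By definition \eqref{eq:ck-def},
\[
c_k(F_E)=\frac{1}{\pi}\int_{-\pi}^{\pi}\log|2\cos\alpha-2\cos\theta|\cos(k\theta)\,d\theta .
\]
The integrand is in $L^1(-\pi,\pi)$, and in fact in $L^2$. The only singularities are logarithmic, at $\theta=\pm\alpha$, and these are simple zeros of $2\cos\alpha-2\cos\theta$ because $\alpha\in(0,\pi)$ and so $\sin\alpha\neq0$. Hence the coefficient is well defined.

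By Lemma~\ref{lem:logseries}, the function $\theta\mapsto\log|2\cos\alpha-2\cos\theta|$ equals, in $L^2(-\pi,\pi)$, the series
\[
-2\sum_{j\ge1}\frac{\cos(j\alpha)}{j}\cos(j\theta).
\]
Pairing with $\cos(k\theta)\in L^2(-\pi,\pi)$ is a continuous linear functional on $L^2$, so it may be exchanged with the $L^2$-convergent sum. Using the orthogonality relations
\[
\int_{-\pi}^{\pi}\cos(j\theta)\cos(k\theta)\,d\theta=\pi\delta_{jk}\qquad (j,k\ge1),
\]
only the $j=k$ term survives. This gives
\[
c_k(F_E)=\frac1\pi\cdot\left(-\frac{2\cos(k\alpha)}{k}\right)\cdot\pi=-\frac{2}{k}\cos(k\alpha).
\]

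Nothing here is a real obstacle. The one point to state explicitly is the justification for exchanging the sum and the integral, which is the $L^2$ convergence guaranteed by Lemma~\ref{lem:logseries}. The mode $k=0$ (the constant term) is excluded because $k\ge1$, so the fact that Lemma~\ref{lem:logseries} has no constant term is consistent: it reflects $\int_{-\pi}^{\pi}\log|2\cos\alpha-2\cos\theta|\,d\theta=0$, which we do not need.
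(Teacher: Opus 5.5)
Your proposal is correct and follows essentially the same route as the paper, which likewise inserts the series from Lemma~\ref{lem:logseries} into the definition \eqref{eq:ck-def} and uses orthogonality of cosines. Your explicit justification of the sum--integral exchange, via $L^2$ convergence and continuity of pairing with $\cos(k\theta)$, is exactly the step the paper leaves implicit.
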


\begin{proof}
Insert \eqref{eq:logseries} into \eqref{eq:ck-def} and use orthogonality of cosines:
\begin{align*}
c_k(F_E)
&=
\frac{1}{\pi}
\int_{-\pi}^{\pi}
\log|2\cos\alpha-2\cos\theta|\cos(k\theta)\,d\theta
\\
&=
\frac{1}{\pi}
\int_{-\pi}^{\pi}
\left(-2\sum_{m\ge1}\frac{\cos(m\alpha)\cos(m\theta)}{m}\right)
\cos(k\theta)\,d\theta
\\
&=
-\frac{2}{k}\cos(k\alpha).
\qedhere
\end{align*}
\end{proof}

\begin{lemma}\label{lem:log-testcoeff}
For $\phi\in C_c^\infty(I)$ and $k\ge1$,
\begin{equation}\label{eq:testcoeff-log}
c_k(F_\phi)
=
-\frac{2}{k}
\int_I \phi(E) T_k(E/2)\,dE,
\end{equation}
where $T_k$ is the $k$th Chebyshev polynomial of the first kind. Moreover, the sequence
\[
\int_I \phi(E)T_k(E/2)\,dE
\]
decays faster than any negative power of $k$.
\end{lemma}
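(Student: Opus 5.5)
The identity \eqref{eq:testcoeff-log} follows from Lemma~\ref{lem:log-pointcoeff} by integrating against $\phi$ and exchanging the order of integration. By the definitions \eqref{eq:ck-def} and \eqref{eq:def-Fphi},
\[
c_k(F_\phi)=\frac{1}{\pi}\int_{-\pi}^{\pi}\left(\int_I\phi(E)\log|E-2\cos\theta|\,dE\right)\cos(k\theta)\,d\theta .
\]
To justify Fubini we need $(E,\theta)\mapsto \phi(E)\log|E-2\cos\theta|\cos(k\theta)$ to be in $L^1(I\times(-\pi,\pi))$. Since $\phi$ is bounded, it suffices that $\sup_{E\in I}\int_{-\pi}^{\pi}\bigl|\log|E-2\cos\theta|\bigr|\,d\theta<\infty$. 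This holds because $I\Subset(-2,2)$: writing $E=2\cos\alpha$ with $\alpha$ in a compact subset of $(0,\pi)$, the factorization in the proof of Lemma~\ref{lem:logseries} shows that $\log|E-2\cos\theta|$ is a sum of two terms $\log|2\sin((\alpha\pm\theta)/2)|$. Each term has a uniformly integrable logarithmic singularity, and its $L^1$ norm is independent of $\alpha$ by translation invariance on the circle.

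After swapping, the inner $\theta$-integral is $c_k(F_E)=-\frac{2}{k}\cos(k\alpha)$ by Lemma~\ref{lem:log-pointcoeff}. Since $\cos(k\alpha)=T_k(\cos\alpha)=T_k(E/2)$, this gives \eqref{eq:testcoeff-log}.

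For the decay claim, I would substitute $E=2\cos\alpha$, so that $dE=-2\sin\alpha\,d\alpha$. Then
\[
\int_I\phi(E)T_k(E/2)\,dE=\int_0^\pi g(\alpha)\cos(k\alpha)\,d\alpha,\qquad g(\alpha)=2\phi(2\cos\alpha)\sin\alpha .
\]
Because $\phi\in C_c^\infty((a,b))$ and $[a,b]\Subset(-2,2)$, the function $g$ is $C^\infty$ and compactly supported in the open interval $(0,\pi)$; the change of variables is a diffeomorphism there since $\sin\alpha$ is bounded away from zero on the support. Integrating by parts $m$ times produces no boundary terms and gives the bound $\le k^{-m}\|g^{(m)}\|_{L^1}$ for every $m$. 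This is the claimed superpolynomial decay.

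The only step requiring care is the Fubini justification, namely the uniform integrability of the logarithmic singularity. Both it and the integration by parts rely on the test function living strictly inside the bulk, which keeps $\alpha$ away from $0$ and $\pi$.
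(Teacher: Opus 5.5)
Your proposal is correct and follows the paper's proof: Fubini together with Lemma~\ref{lem:log-pointcoeff} and $\cos(k\alpha)=T_k(E/2)$ give the identity, and the substitution $E=2\cos\alpha$ followed by repeated integration by parts gives the decay. You also make explicit two things the paper leaves implicit, namely the $L^1$ justification for Fubini and the explicit function $\widetilde\phi(\alpha)=2\phi(2\cos\alpha)\sin\alpha$.
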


\begin{proof}
Fubini's theorem and Lemma \ref{lem:log-pointcoeff} give
\[
c_k(F_\phi)
=
\int_I \phi(E)c_k(F_E)\,dE
=
-\frac{2}{k}
\int_I \phi(E)T_k(E/2)\,dE.
\]
For the decay, write $E=2\cos\alpha$ with $\alpha\in(0,\pi)$. Since $\phi$ is compactly supported in the spectral bulk, there exists $\widetilde\phi\in C_c^\infty((0,\pi))$ such that
\[
\int_I \phi(E)T_k(E/2)\,dE
=
\int_0^\pi \widetilde\phi(\alpha)\cos(k\alpha)\,d\alpha.
\]
Repeated integration by parts gives the claimed decay.
\end{proof}

\begin{proposition}\label{prop:log-covariance}
For all $\phi,\psi\in C_c^\infty(I)$, we have $
V(F_\phi,F_\psi)=V_{\mathrm{log}}(\phi,\psi)$.
\end{proposition}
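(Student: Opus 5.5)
The identity is bilinear on both sides, so I will expand $V(F_\phi,F_\psi)$ in Chebyshev coefficients and resum. Polarizing \eqref{eq:Q-def} through \eqref{eq:V-def} gives
\[
V(F_\phi,F_\psi)=\frac12\sum_{k\ge1}k\,c_k(F_\phi)c_k(F_\psi)+\frac{s_4}{2}c_2(F_\phi)c_2(F_\psi).
\]
All series here converge absolutely, since by Lemma~\ref{lem:log-testcoeff} the coefficients decay faster than any power of $k$. Write $d_k(\phi)=\int_I\phi(E)T_k(E/2)\,dE$. Lemma~\ref{lem:log-testcoeff} gives $c_k(F_\phi)=-\frac{2}{k}d_k(\phi)$, and substituting this yields
\[
V(F_\phi,F_\psi)=\sum_{k\ge1}\frac{2}{k}d_k(\phi)d_k(\psi)+\frac{s_4}{2}d_2(\phi)d_2(\psi).
\]

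\emph{The $s_4$ term.} Since $T_2(x)=2x^2-1$, we have $T_2(E/2)=E^2/2-1=-(2-E^2)/2$. Therefore
\[
\frac{s_4}{2}d_2(\phi)d_2(\psi)=\frac{s_4}{8}\iint\phi(E)(2-E^2)(2-E'^2)\psi(E')\,dE\,dE',
\]
which is exactly the rank-one part of $K_{\mathrm{log}}$ in \eqref{eq:Klog}.

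\emph{The main term.} It remains to show
\[
\sum_{k\ge1}\frac{2}{k}d_k(\phi)d_k(\psi)=-\iint_{I\times I}\phi(E)\log|E-E'|\psi(E')\,dE\,dE'.
\]
Fix $E'=2\cos\beta$ with $\beta\in(0,\pi)$ and set $E=2\cos\alpha$. Lemma~\ref{lem:logseries}, with $\theta=\beta$, gives
\[
\log|E-E'|=-2\sum_k\frac{T_k(E/2)T_k(E'/2)}{k}.
\]
For fixed $E'$ I integrate this against $\phi(E)\,dE$. Changing variables to $\alpha$, the weight becomes $\widetilde\phi(\alpha)=\phi(2\cos\alpha)\,2\sin\alpha\in C_c^\infty$, and the series converges in $L^2(d\alpha)$ by Lemma~\ref{lem:logseries}. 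Hence term-by-term integration is legitimate and gives
\[
F_\phi(E')=-2\sum_k\frac{d_k(\phi)}{k}T_k(E'/2)
\]
for a.e.\ $E'$. The right-hand side converges uniformly, since $d_k(\phi)$ decays rapidly, so this identity holds pointwise. Integrating against $\psi(E')\,dE'$ and exchanging sum and integral, which uniform convergence allows, gives the claim.

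The only delicate point is this exchange of the series with the integrals. The $L^2$ convergence of the log series along with the rapid decay of $d_k$ handles it. Combining the two terms gives $V(F_\phi,F_\psi)=V_{\mathrm{log}}(\phi,\psi)$.
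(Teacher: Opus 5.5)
Your proposal is correct and follows the paper's proof: polarize $Q$, substitute $c_k(F_\phi)=-\frac{2}{k}d_k(\phi)$ from Lemma~\ref{lem:log-testcoeff}, resum the main term via Lemma~\ref{lem:logseries}, and read off the rank-one $s_4$ term from $T_2(E/2)=-(2-E^2)/2$. Your two-step interchange is actually a more careful justification than the paper's one-line appeal to Fubini, because $\sum_k |T_k(E/2)T_k(E'/2)|/k$ is not absolutely convergent: you first use $L^2$ convergence in $E$ at fixed $E'$, then uniform convergence in $E'$ from the rapid decay of $d_k(\phi)$.
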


\begin{proof}
By polarizing \eqref{eq:Q-def},
\begin{equation}\label{eq:polarized-log}
V(F_\phi,F_\psi)
=
\frac{1}{2}\sum_{k\ge1} k\, c_k(F_\phi)c_k(F_\psi)
+
\frac{s_4}{2} c_2(F_\phi)c_2(F_\psi).
\end{equation}
Using \eqref{eq:testcoeff-log},
\[
\frac{1}{2}\sum_{k\ge1} k\, c_k(F_\phi)c_k(F_\psi)
=
2\sum_{k\ge1}\frac{1}{k}
\left(\int_I \phi(E)T_k(E/2)\,dE\right)
\left(\int_I \psi(E')T_k(E'/2)\,dE'\right).
\]
The coefficient sequences decay rapidly, so Fubini's theorem yields
\[
\frac{1}{2}\sum_{k\ge1} k\, c_k(F_\phi)c_k(F_\psi)
=
\iint_{I\times I}
\phi(E)\psi(E')
\left(
2\sum_{k\ge1}\frac{T_k(E/2)T_k(E'/2)}{k}
\right)
dE\,dE'.
\]
Writing $E=2\cos\alpha$ and $E'=2\cos\beta$ and using \eqref{eq:logseries}, we get
\[
2\sum_{k\ge1}\frac{T_k(E/2)T_k(E'/2)}{k}
=
2\sum_{k\ge1}\frac{\cos(k\alpha)\cos(k\beta)}{k}
=
-\log|E-E'|.
\]

For the cumulant term, \eqref{eq:testcoeff-log} with $k=2$ gives
\[
c_2(F_\phi)
=
-\int_I \phi(E)T_2(E/2)\,dE
=
\frac{1}{2}\int_I \phi(E)(2-E^2)\,dE,
\]
because $T_2(t)=2t^2-1$. Substituting into \eqref{eq:polarized-log} yields the conclusion.
\end{proof}

\subsection{The counting field covariance}\label{s:countfieldcovariance}

For every $\phi \in L^\infty(I)$, with $\phi$ extended by zero outside $I$, set
\begin{equation}\label{eq:def-Gphi}
G_\phi(x)=\int_x^\infty \phi(E)\,dE.
\end{equation}

\begin{lemma}\label{lem:count-pointcoeff}
Let $E=2\cos\alpha$ with $\alpha\in(0,\pi)$ and set $H_E(x)=\1_{\{x\le E\}}$. Then, for every $k\ge1$,
\begin{equation}\label{eq:stepcoeff}
c_k(H_E)=-\frac{2}{\pi k}\sin(k\alpha).
\end{equation}
\end{lemma}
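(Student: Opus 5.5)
This is a direct computation from the definition \eqref{eq:ck-def}, with no series expansion needed. For $\theta\in(-\pi,\pi)$ we have $H_E(2\cos\theta)=\1_{\{2\cos\theta\le 2\cos\alpha\}}$. Since $\cos$ is decreasing on $[0,\pi]$ and even, this indicator equals $\1_{\{\alpha\le|\theta|\le\pi\}}$ for almost every $\theta$; the boundary points $\theta=\pm\alpha$ have measure zero and can be ignored.

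Using evenness of $\cos(k\theta)$, I will write
\[
c_k(H_E)=\frac{1}{\pi}\int_{-\pi}^{\pi}\1_{\{|\theta|\ge\alpha\}}\cos(k\theta)\,d\theta
=\frac{2}{\pi}\int_{\alpha}^{\pi}\cos(k\theta)\,d\theta .
\]
Integrating explicitly gives
\[
\frac{2}{\pi}\cdot\frac{\sin(k\pi)-\sin(k\alpha)}{k}=-\frac{2}{\pi k}\sin(k\alpha),
\]
since $\sin(k\pi)=0$ for every integer $k\ge1$. This is exactly \eqref{eq:stepcoeff}.

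There is no real obstacle here. The only points that need care are getting the orientation right, namely that $\{x\le E\}$ corresponds to $|\theta|\ge\alpha$ rather than $|\theta|\le\alpha$, and observing that the discontinuity of $H_E$ does not affect the integral.

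As a sanity check, differentiating \eqref{eq:pointcoeff-log} formally in $E$ should agree with this result up to the factor $1/\pi$ coming from the Hilbert-transform relation between $\log|x-E|$ and the step function. I will not rely on this, however, since the direct computation above is already complete.
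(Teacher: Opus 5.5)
Your proof is correct and is exactly the paper's argument: the same reduction $\{2\cos\theta\le 2\cos\alpha\}=\{|\theta|\ge\alpha\}$, evenness of the integrand to get $\frac{2}{\pi}\int_\alpha^\pi\cos(k\theta)\,d\theta$, and explicit integration using $\sin(k\pi)=0$. (Your closing sanity check is not accurate as stated, since formally differentiating $-\frac{2}{k}\cos(k\alpha)$ in $E=2\cos\alpha$ gives $-\sin(k\alpha)/\sin\alpha$, which is not a constant multiple of $-\frac{2}{\pi k}\sin(k\alpha)$; the actual link is the conjugation $\cos(k\alpha)\leftrightarrow\sin(k\alpha)$ in $\alpha$. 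You explicitly do not rely on it, so the proof is unaffected.)
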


\begin{proof}
By definition,
\[
c_k(H_E)
=
\frac{1}{\pi}\int_{-\pi}^{\pi}
\1_{\{2\cos\theta\le E\}}\cos(k\theta)\,d\theta.
\]
Since $2\cos\theta\le2\cos\alpha$ if and only if $|\theta|\ge\alpha$, we obtain
\[
c_k(H_E)
=
\frac{2}{\pi}\int_\alpha^\pi \cos(k\theta)\,d\theta
=
-\frac{2}{\pi k}\sin(k\alpha).
\]
\end{proof}

\begin{lemma}\label{lem:count-testcoeff}
For all $\phi\in C_c^\infty(I)$ and every $k\ge1$,
\begin{align}
c_k(G_\phi)
&=
-\frac{2}{\pi k}\int_I \phi(E)\sin\big(k\alpha(E)\big)\,dE,
\label{eq:countcoeff1}
\\
&=
-\frac{1}{\pi k}\int_I \phi(E)U_{k-1}(E/2)\sqrt{4-E^2}\,dE,
\label{eq:countcoeff2}
\end{align}
where $\alpha(E)=\arccos(E/2)$ and $U_{k-1}$ is the $(k-1)$st Chebyshev polynomial of the second kind. Further, the sequence $(c_k(G_\phi))_{k\ge1}$ decays faster than any negative power of $k$.
\end{lemma}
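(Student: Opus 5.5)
The proof mirrors that of Lemma~\ref{lem:log-testcoeff}, with Lemma~\ref{lem:count-pointcoeff} in place of Lemma~\ref{lem:log-pointcoeff}. The idea is to write $G_\phi$ as a superposition of the step functions $H_E$ and then read off its Chebyshev coefficients.

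\textbf{Representation.} Since $\phi$ is supported in $I$, for every $x$ we have
\[
G_\phi(x)=\int_x^\infty\phi(E)\,dE=\int_I\phi(E)\1_{\{x\le E\}}\,dE=\int_I \phi(E)H_E(x)\,dE .
\]

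\textbf{First formula.} Substitute this into \eqref{eq:ck-def}. The integrand $\phi(E)\1_{\{2\cos\theta\le E\}}\cos(k\theta)$ is bounded and lives on the bounded set $I\times(-\pi,\pi)$, so Fubini's theorem applies. It gives
\[
c_k(G_\phi)=\int_I\phi(E)\,c_k(H_E)\,dE .
\]
Lemma~\ref{lem:count-pointcoeff} then yields \eqref{eq:countcoeff1}, since $E\in I\subset(-2,2)$ means $\alpha(E)=\arccos(E/2)\in(0,\pi)$.

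\textbf{Second formula.} Use the defining identity of the second-kind polynomials, $U_{k-1}(\cos\alpha)\sin\alpha=\sin(k\alpha)$. Here $\cos\alpha=E/2$, and $\sin\alpha=\sqrt{1-E^2/4}=\tfrac12\sqrt{4-E^2}$ because $\alpha\in(0,\pi)$ makes $\sin\alpha>0$. Hence
\[
\sin(k\alpha(E))=\tfrac12\,U_{k-1}(E/2)\sqrt{4-E^2}.
\]
Substituting into \eqref{eq:countcoeff1} turns the prefactor $-\tfrac{2}{\pi k}$ into $-\tfrac{1}{\pi k}$, which is \eqref{eq:countcoeff2}.

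\textbf{Decay.} Change variables $E=2\cos\alpha$, so $dE=-2\sin\alpha\,d\alpha$. Then
\[
\int_I\phi(E)\sin(k\alpha(E))\,dE=\int_0^\pi\widetilde\phi(\alpha)\sin(k\alpha)\,d\alpha,\qquad \widetilde\phi(\alpha)=2\phi(2\cos\alpha)\sin\alpha .
\]
Because $\supp\phi\Subset(-2,2)$, the function $\widetilde\phi$ lies in $C_c^\infty((0,\pi))$. Integrating by parts $m$ times produces no boundary terms, so the integral is $O(k^{-m})$ for every $m$. With the extra factor $1/k$ in front, $(c_k(G_\phi))_{k\ge1}$ decays faster than any negative power of $k$.

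\textbf{Main obstacle.} There is essentially none. The only points to check are the Fubini justification, which is immediate from boundedness, and that the branch $\alpha\in(0,\pi)$ gives $\sin\alpha>0$ and hence the correct sign of the square root.
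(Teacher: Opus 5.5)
Your proposal is correct and follows the paper's proof essentially step for step. Both arguments use the superposition $G_\phi=\int_I\phi(E)H_E\,dE$ with Fubini and Lemma~\ref{lem:count-pointcoeff}, then the identity $\sin(k\alpha)=U_{k-1}(\cos\alpha)\sin\alpha$, then the substitution $E=2\cos\alpha$ followed by repeated integration by parts against a function in $C_c^\infty((0,\pi))$; your explicit $\widetilde\phi(\alpha)=2\phi(2\cos\alpha)\sin\alpha$ and the boundedness justification for Fubini only spell out what the paper leaves implicit.
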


\begin{proof}
Since $G_\phi(x)=\int_I \phi(E)H_E(x)\,dE$, Fubini's theorem and Lemma \ref{lem:count-pointcoeff} give
\[
c_k(G_\phi)
=
\int_I \phi(E)c_k(H_E)\,dE
=
-\frac{2}{\pi k}
\int_I \phi(E)\sin(k\alpha(E))\,dE,
\]
which is \eqref{eq:countcoeff1}. Using
\[
\sin(k\alpha)=U_{k-1}(\cos\alpha)\sin\alpha
=
U_{k-1}(E/2)\frac{\sqrt{4-E^2}}{2}
\]
yields \eqref{eq:countcoeff2}.

For the decay, write $E=2\cos\theta$ with $\theta\in(0,\pi)$. Then
\[
\int_I \phi(E)\sin(k\alpha(E))\,dE
=
2\int_0^\pi \widetilde\phi(\theta)\sin(k\theta)\,d\theta
\]
for some $\widetilde\phi\in C_c^\infty((0,\pi))$. Repeated integration by parts finishes the proof.
\end{proof}

\begin{lemma}\label{lem:sineseries}
For all $\alpha,\beta\in(0,\pi)$ such that $\alpha \neq \beta$,
\begin{equation}\label{eq:sineseries}
2\sum_{k=1}^\infty \frac{\sin(k\alpha)\sin(k\beta)}{k}
=
\log\left|
\frac{\sin\frac{\alpha+\beta}{2}}{\sin\frac{\alpha-\beta}{2}}
\right|.
\end{equation}
\end{lemma}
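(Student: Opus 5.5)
The plan is to reduce \eqref{eq:sineseries} to the classical cosine series already used in the proof of Lemma~\ref{lem:logseries}. Apply the product-to-sum identity
\[
2\sin(k\alpha)\sin(k\beta)=\cos\big(k(\alpha-\beta)\big)-\cos\big(k(\alpha+\beta)\big).
\]
This gives, at least formally,
\[
2\sum_{k\ge1}\frac{\sin(k\alpha)\sin(k\beta)}{k}
=\sum_{k\ge1}\frac{\cos(k(\alpha-\beta))}{k}-\sum_{k\ge1}\frac{\cos(k(\alpha+\beta))}{k}.
\]

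Next, each of the two series on the right is evaluated with
\[
\sum_{k\ge1}\frac{\cos(kt)}{k}=-\log\big(2|\sin(t/2)|\big),
\]
which holds pointwise for every $t\notin2\pi\mathbb Z$. Here is where the hypotheses enter.
\begin{itemize}
\item Since $\alpha,\beta\in(0,\pi)$ and $\alpha\neq\beta$, we have $\alpha-\beta\in(-\pi,\pi)\setminus\{0\}$.
\item We also have $\alpha+\beta\in(0,2\pi)$.
\end{itemize}
So neither argument is a multiple of $2\pi$, and both series converge (conditionally, by Dirichlet's test, since $1/k$ decreases and the partial sums of $\cos(kt)$ are bounded for such $t$). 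Because both series converge, splitting the sum of the combined terms into the difference of the two series is legitimate.

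Substituting the closed forms and cancelling the factors of $2$ inside the logarithms gives
\[
-\log\Big|2\sin\tfrac{\alpha-\beta}{2}\Big|+\log\Big|2\sin\tfrac{\alpha+\beta}{2}\Big|
=\log\left|\frac{\sin\frac{\alpha+\beta}{2}}{\sin\frac{\alpha-\beta}{2}}\right|,
\]
which is the claim.

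The only delicate point is the pointwise (rather than $L^2$) validity of the classical series at the specific arguments. I would justify it either by Dirichlet's test combined with Abel's theorem, letting $r\uparrow1$ in $\sum_k r^k\cos(kt)/k=-\operatorname{Re}\log(1-re^{it})$, or simply by citing the standard fact that the series converges pointwise off $2\pi\mathbb Z$, as was already recalled in Lemma~\ref{lem:logseries}. Evenness of the cosine lets us treat the negative argument $\alpha-\beta$ without change.
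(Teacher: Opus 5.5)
Your proposal is correct and is essentially the paper's own proof: the product-to-sum identity followed by the classical series $\sum_{k\ge1}\cos(kt)/k=-\log(2|\sin(t/2)|)$. Your checks that $\alpha\pm\beta\notin 2\pi\mathbb{Z}$, your use of evenness to handle a negative $\alpha-\beta$, and your justification of pointwise convergence fill in details that the paper leaves implicit.
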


\begin{proof}
The conclusion follows from
\[
2\sin(k\alpha)\sin(k\beta)=\cos(k(\alpha-\beta))-\cos(k(\alpha+\beta))
\]
and the Fourier series
\[
\sum_{k=1}^\infty \frac{\cos(kt)}{k}
=
-\log\big(2\big|\sin(t/2)\big|\big),
\qquad t\in(0,2\pi).
\]
\end{proof}

\begin{proposition}\label{prop:count-covariance}
For $\phi,\psi\in C_c^\infty(I)$, we have $
V(G_\phi,G_\psi)=V_{\mathrm{cnt}}(\phi,\psi)$. 
\end{proposition}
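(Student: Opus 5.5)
The plan is to follow the template of Proposition~\ref{prop:log-covariance}. First, polarize \eqref{eq:Q-def} to write
\[
V(G_\phi,G_\psi)=\frac12\sum_{k\ge1}k\,c_k(G_\phi)c_k(G_\psi)+\frac{s_4}{2}c_2(G_\phi)c_2(G_\psi).
\]
Then substitute \eqref{eq:countcoeff1} for each coefficient. This gives
\[
\frac12\sum_{k\ge1}k\,c_k(G_\phi)c_k(G_\psi)=\frac{2}{\pi^2}\sum_{k\ge1}\frac1k\Big(\int_I\phi(E)\sin(k\alpha(E))\,dE\Big)\Big(\int_I\psi(E')\sin(k\alpha(E'))\,dE'\Big).
\]

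The next step is to interchange the sum with the double integral. The rapid decay from Lemma~\ref{lem:count-testcoeff} is not by itself enough, because it concerns the integrated coefficients. So I would argue as in Proposition~\ref{prop:log-covariance}: the partial sums of $2\sum_k \sin(k\alpha)\sin(k\beta)/k$ converge in $L^2$ of $(\alpha,\beta)$, being a difference of two cosine series with square-summable coefficients. Pairing with the bounded, compactly supported test function $\phi(2\cos\alpha)\psi(2\cos\beta)$, after changing variables, justifies the interchange. Lemma~\ref{lem:sineseries} then identifies the kernel as
\[
\frac{1}{\pi^2}\log\left|\frac{\sin\frac{\alpha+\beta}{2}}{\sin\frac{\alpha-\beta}{2}}\right|.
\]
The diagonal $\alpha=\beta$ has measure zero, so it causes no difficulty.

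The main computation is to rewrite this kernel in terms of $E=2\cos\alpha$ and $E'=2\cos\beta$. Squaring and using $2\sin^2(t/2)=1-\cos t$, we get
\[
\frac{\sin^2\frac{\alpha+\beta}{2}}{\sin^2\frac{\alpha-\beta}{2}}=\frac{1-\cos\alpha\cos\beta+\sin\alpha\sin\beta}{1-\cos\alpha\cos\beta-\sin\alpha\sin\beta}.
\]
With $\cos\alpha=E/2$ and $\sin\alpha=\sqrt{4-E^2}/2$, the numerator is $A_+/4$ and the denominator is $A_-/4$, where
\[
A_\pm=4-EE'\pm\sqrt{4-E^2}\sqrt{4-E'^2}.
\]
Their product is
\[
A_+A_-=(4-EE')^2-(4-E^2)(4-E'^2)=4(E-E')^2.
\]
Hence
\[
\frac{A_+}{A_-}=\frac{A_+^2}{4(E-E')^2},
\qquad
\frac12\log\frac{A_+}{A_-}=\log\frac{A_+}{2}-\log|E-E'|.
\]
Multiplying by $1/\pi^2$ recovers exactly the first two terms of \eqref{eq:Kcnt}. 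I expect this algebraic simplification, together with keeping the constants straight, to be the main point to check.

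Finally, for the cumulant term, \eqref{eq:countcoeff2} with $k=2$ and $U_1(t)=2t$ gives
\[
c_2(G_\phi)=-\frac{1}{2\pi}\int_I\phi(E)\,E\sqrt{4-E^2}\,dE.
\]
Therefore
\[
\frac{s_4}{2}c_2(G_\phi)c_2(G_\psi)=\frac{s_4}{8\pi^2}\iint\phi(E)\psi(E')\,EE'\sqrt{4-E^2}\sqrt{4-E'^2}\,dE\,dE',
\]
which matches the last term of \eqref{eq:Kcnt}. Summing the pieces yields $V_{\mathrm{cnt}}(\phi,\psi)$.
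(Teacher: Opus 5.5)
Your proposal is correct and follows the paper's proof essentially step for step: polarization of \eqref{eq:Q-def}, substitution of \eqref{eq:countcoeff1}, Lemma~\ref{lem:sineseries}, rewriting the kernel in the variables $E,E'$, and the rank-one term from \eqref{eq:countcoeff2} with $U_1(t)=2t$. The differences are minor. Your identity $A_+A_-=4(E-E')^2$ is equivalent to the paper's use of $E-E'=-4\sin\frac{\alpha+\beta}{2}\sin\frac{\alpha-\beta}{2}$. Your $L^2$-convergence justification of the sum--integral interchange is more careful than the paper's one-line appeal to rapid decay and Fubini, since the kernel series itself is not absolutely summable.
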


\begin{proof}
By polarizing \eqref{eq:Q-def},
\begin{equation}\label{eq:polarized-count}
V(G_\phi,G_\psi)
=
\frac{1}{2}\sum_{k\ge1} k\, c_k(G_\phi)c_k(G_\psi)
+
\frac{s_4}{2}c_2(G_\phi)c_2(G_\psi).
\end{equation}
Using \eqref{eq:countcoeff1},
\[
\frac{1}{2}\sum_{k\ge1} k\, c_k(G_\phi)c_k(G_\psi)
=
\frac{2}{\pi^2}
\sum_{k\ge1}\frac{1}{k}
\left(\int_I \phi(E)\sin(k\alpha(E))\,dE\right)
\left(\int_I \psi(E')\sin(k\alpha(E'))\,dE'\right).
\]
Rapid decay of the sequence $(c_k(G_\phi))_{k\ge1}$ (from Lemma~\ref{lem:count-testcoeff}) justifies the use of Fubini's theorem, so Lemma \ref{lem:sineseries} gives
\[
\frac{1}{2}\sum_{k\ge1} k\, c_k(G_\phi)c_k(G_\psi)
=
\iint_{I\times I}
\phi(E)\psi(E')
\frac{1}{\pi^2}
\log\left|
\frac{\sin\frac{\alpha(E)+\alpha(E')}{2}}
{\sin\frac{\alpha(E)-\alpha(E')}{2}}
\right|
dE\,dE'.
\]
Using
\[
E-E'=2\cos\alpha(E)-2\cos\alpha(E')
=
-4\sin\frac{\alpha(E)+\alpha(E')}{2}
\sin\frac{\alpha(E)-\alpha(E')}{2},
\]
we rewrite the logarithmic  kernel as
\[
-\frac{1}{\pi^2}\log|E-E'|
+
\frac{1}{\pi^2}
\log\left(
\frac{4-EE'+\sqrt{4-E^2}\sqrt{4-E'^2}}{2}
\right).
\]
For the cumulant term, \eqref{eq:countcoeff2} with $k=2$ yields
\[
c_2(G_\phi)
=
-\frac{1}{2\pi}
\int_I E\sqrt{4-E^2}\,\phi(E)\,dE.
\]
Substituting this into \eqref{eq:polarized-count} produces exactly the rank-one term in \eqref{eq:Kcnt}.
\end{proof}

\section{The Log-Determinant Field}
\label{s:logfieldproof}

This section is devoted to the proof of Theorem~\ref{thm:log-main}.
For a given $s>0$ we fix
an auxiliary exponent
\[
    0<\sigma<\min\{s,1/2\}.
\]
For such $\sigma$, $C_c^\infty(I)$ is dense in $H_0^\sigma(I)$ (see, e.g., \cite[(3.5a)]{ern2021finite}). We fix a
countable set
\[
    \mathcal D_\sigma\subset C_c^\infty(I)
\]
that is dense in $H_0^\sigma(I)$.
\subsection{Preliminary estimates}
For all $\phi\in L^\infty(I)$, Fubini's theorem gives
\begin{equation}\label{eq:log-linear-stat}
\ip{X_N^{\mathrm{log}}}{\phi}
=
\mathcal N_N(F_\phi).
\end{equation}
We recall the choices of $\eta$ and $\chi$ from \eqref{eq:cutoffs}. 
\begin{lemma}\label{lem:tail-negligible-log}
For every $\phi\in C_c^\infty(I)$, we have the $L^2$-limit
\[
\lim_{N \rightarrow \infty} \mathcal N_N\big((1-\eta)F_\phi\big)= 0.
\]
\end{lemma}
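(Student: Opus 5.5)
Since $1-\eta$ vanishes on $[-3,3]$, the function $g:=(1-\eta)F_\phi$ satisfies $g(\lambda_j)=0$ for every eigenvalue on the event $\{\|H_N\|<3\}$. Hence $\Tr g(H_N)=\Tr g(H_N)\1_{\{\|H_N\|\ge3\}}$ pointwise. I will show that this quantity tends to zero in $L^2$. This suffices because
\[
\E\bigl|\mathcal N_N(g)\bigr|^2=\Var\bigl(\Tr g(H_N)\bigr)\le \E\bigl|\Tr g(H_N)\bigr|^2 .
\]

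First I need a polynomial growth bound on $g$. For $|x|\ge3$ and $E\in I\subset(-2,2)$ we have $1\le|x-E|\le|x|+2$. Therefore
\[
|F_\phi(x)|\le\|\phi\|_{L^1(I)}\log(|x|+2)\le C_\phi(1+|x|),
\]
and the same bound holds for $g$, since $0\le1-\eta\le C$ (as $\eta$ is a fixed smooth function). Consequently
\[
|\Tr g(H_N)|\le N C_\phi(1+\|H_N\|)\,\1_{\{\|H_N\|\ge3\}}.
\]

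Next I apply Cauchy--Schwarz:
\[
\E|\Tr g(H_N)|^2\le N^2C_\phi^2\,\bigl(\E(1+\|H_N\|)^4\bigr)^{1/2}\,\Pbb(\|H_N\|\ge3)^{1/2}.
\]
By \eqref{eq:norm-moment} the fourth-moment factor is bounded uniformly in $N$. By \eqref{eq:norm-tail} the probability factor is at most $Ce^{-c(\log N)^2/2}$. This superpolynomial decay beats $N^2$, so the right-hand side tends to $0$, which proves the claim.

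The only point requiring care is that $g$ is unbounded, because $F_\phi$ grows logarithmically at infinity. This is exactly why the moment bound \eqref{eq:norm-moment} is needed in addition to the tail bound \eqref{eq:norm-tail}. I do not expect a genuine obstacle: the $N^2$ factor coming from the trace is harmless against $e^{-c(\log N)^2}$.
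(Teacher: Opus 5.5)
Your proof is correct and is essentially the paper's argument. The paper also bounds $|\Tr (1-\eta)F_\phi(H_N)|$ by $N C_\phi$ times a growth factor on $\{\|H_N\|>3\}$, then uses Cauchy--Schwarz together with \eqref{eq:norm-moment} and \eqref{eq:norm-tail}; it just keeps the sharper $\log(2+\|H_N\|)$ factor and uses $\|\mathcal N_N\|_2\le 2\|\Tr\|_2$ rather than variance $\le$ second moment. One small slip: \eqref{eq:cutoffs} does not guarantee $0\le 1-\eta$, but you only need $|1-\eta|\le C$, which holds because $\eta$ is bounded.
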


\begin{proof}
The function $(1-\eta)F_\phi$ vanishes on $[-3,3]$, and 
\[
|F_\phi(x)|\le C_\phi \log(2+|x|),\qquad x\in\R.
\]
Hence
\[
\left|\Tr (1-\eta)F_\phi(H_N)\right|
\le
N\,C_\phi \log(2+\|H_N\|)\,\1_{\{\|H_N\|>3\}}.
\]
Moreover,
\[
\left\|\mathcal N_N\bigl((1-\eta)F_\phi\bigr)\right\|_2
\le
2\left\|\Tr (1-\eta)F_\phi(H_N)\right\|_2 .
\]
Therefore
\begin{align*}
\E\left|\mathcal N_N\bigl((1-\eta)F_\phi\bigr)\right|^2
&\le
C_\phi N^2 \E\!\left[\log^2(2+\|H_N\|)\1_{\{\|H_N\|>3\}}\right]
\\
&\le
C_\phi N^2 \Bigl(\E \log^4(2+\|H_N\|)\Bigr)^{1/2}
\Pbb(\|H_N\|>3)^{1/2}
\\
&\le
C_\phi N^2 e^{-c(\log N)^2},
\end{align*}
where we used the Cauchy--Schwarz inequality and Proposition~\ref{prop:norm-tail}. In particular, $\mathcal N_N((1-\eta)F_\phi)\to0$ in $L^2$.
\end{proof}

Given a distribution $f$ on $I$, define
\[
\mathcal F f(x)=\int_I f(E)\log|x-E|\,dE
\]
whenever the right-hand side is well defined. We will use
\begin{equation}\label{eq:log-decomp}
\eta\,\mathcal F f = Bf+Rf,
\qquad
Bf=\chi\,\mathcal F f,
\qquad
Rf=(\eta-\chi)\mathcal F f.
\end{equation}

\begin{lemma}\label{lem:log-remainder-smoothing}
For every $q\ge0$, the operator $R$ from \eqref{eq:log-decomp} 
extends continuously from $L^2(I)$ to $H^q(\mathbb R)$. In particular, if
$\phi\in C_c^\infty(I)$, then $R\phi\in C_c^\infty(\mathbb R)$. 
Moreover, for every $q\ge0$ there exists $C_q>0$ such that
\[
\|Re_k\|_{H^q(\mathbb R)}
\le C_q(1+\mu_k)^{-1/2},
\qquad k\ge1.
\]
\end{lemma}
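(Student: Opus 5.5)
The key observation is that the cutoff $\eta-\chi$ vanishes on a neighborhood of $I$. Indeed, $\chi\equiv1$ on a neighborhood of $I$ and $\eta\equiv1$ on $[-3,3]\supset J\supset\supp\chi$, so there is $\delta>0$ with $\eta-\chi\equiv0$ on $I_\delta=\{x:\operatorname{dist}(x,I)<\delta\}$. Moreover, $\eta-\chi\in C_c^\infty(\R)$. Hence on the support of $\eta-\chi$ the kernel $(x,E)\mapsto(\eta(x)-\chi(x))\log|x-E|$ is smooth in $x$, with $|x-E|\ge\delta$ for all $E\in I$. The plan is to write
\[
Rf(x)=\int_I f(E)\,k(x,E)\,dE,\qquad k(x,E)=(\eta(x)-\chi(x))\log|x-E|,
\]
and note that $k\in C^\infty(\R\times I)$, where each $\partial_x^j k$ is bounded on $\R\times I$ and supported in $x\in\supp\eta$, a fixed compact set.

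For continuity $L^2(I)\to H^q(\R)$, it suffices (taking $q$ an integer, then interpolating or using monotonicity of the norms) to bound $\|\partial_x^jRf\|_{L^2(\R)}$ for $j\le q$. Differentiating under the integral gives $\partial_x^jRf(x)=\int_I f(E)\partial_x^jk(x,E)\,dE$. Cauchy--Schwarz yields $|\partial_x^jRf(x)|\le \|f\|_{L^2(I)}\,\|\partial_x^jk(x,\cdot)\|_{L^2(I)}$, and since the right-hand side is bounded and compactly supported in $x$, we get $\|Rf\|_{H^q}\le C_q\|f\|_{L^2(I)}$. For $\phi\in C_c^\infty(I)$, the same differentiation shows that $R\phi$ is smooth with support in $\supp\eta$, so $R\phi\in C_c^\infty(\R)$.

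The decay estimate for $Re_k$ is the only point needing care; the $L^2$ bound alone only gives $O(1)$. The idea is to gain a factor $\mu_k^{-1/2}\sim L/(\pi k)$ by integrating by parts once in $E$. We write $e_k=-\mu_k^{-1/2}\,\partial_E c_k$ with $c_k(E)=\sqrt{2/L}\cos(\pi k(E-a)/L)$. This gives
\[
Re_k(x)=-\mu_k^{-1/2}\Big[c_k(E)k(x,E)\Big]_{E=a}^{E=b}+\mu_k^{-1/2}\int_I c_k(E)\,\partial_Ek(x,E)\,dE.
\]
Since $|x-E|\ge\delta$ on the support of $\eta-\chi$, both the boundary terms (functions of $x$ times $|c_k|\le\sqrt{2/L}$) and $\partial_E k=-(\eta-\chi)(x)/(x-E)$ are smooth in $x$, with all $x$-derivatives bounded uniformly in $E\in I$ and supported in $\supp\eta$. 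Applying the argument of the previous paragraph to both terms (with $\|c_k\|_{L^2(I)}\le 1$), we obtain $\|Re_k\|_{H^q}\le C_q\mu_k^{-1/2}\le C_q'(1+\mu_k)^{-1/2}$, where we use $\mu_k\ge\mu_1>0$. No serious obstacle is expected. The one thing to verify is the support separation, which follows from \eqref{eq:cutoffs} since $\chi\equiv1$ on a neighborhood of $I$.
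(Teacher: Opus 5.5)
Your proposal is correct and follows the paper's proof essentially step by step. Both arguments use the separation of $\supp(\eta-\chi)$ from $I$ to make the kernel $(\eta(x)-\chi(x))\log|x-E|$ smooth, differentiate under the integral with Cauchy--Schwarz in $E$ to get the $L^2(I)\to H^q(\mathbb R)$ bound for integer $q$, and integrate by parts once in $E$ to gain the factor $\mu_k^{-1/2}$ for $Re_k$.
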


\begin{proof}
Let
\[
K_R(x,E)=\big(\eta(x)-\chi(x)\big)\log|x-E|.
\]
Since $\chi\equiv 1$ on a neighborhood of $I$ and $\eta\equiv1$ on
$[-3,3]$, and $I\subset [-3,3]$, the support of $\eta-\chi$ is compact and disjoint
from $I$. Hence $
K_R\in C_c^\infty(\mathbb R\times I)$. 

We first prove the $L^2(I)\to H^q(\mathbb R)$ estimate. It suffices to
consider integers $q\ge0$, since the general case follows by the
continuous embedding $H^n(\mathbb R)\hookrightarrow H^q(\mathbb R)$ for
any integer $n\ge q$. For $0\le j\le q$,
\[
\partial_x^j Rf(x)
=
\int_I \partial_x^j K_R(x,E)f(E)\,dE.
\]
By the Cauchy--Schwarz inequality in $E$,
\[
|\partial_x^j Rf(x)|^2
\le
\|f\|_{L^2(I)}^2
\int_I |\partial_x^jK_R(x,E)|^2\,dE.
\]
Integrating in $x$ gives
\[
\|\partial_x^j Rf\|_{L^2(\mathbb R)}
\le
C_j\|f\|_{L^2(I)}.
\]
Summing over $0\le j\le q$, we obtain
\[
\|Rf\|_{H^q(\mathbb R)}
\le
C_q\|f\|_{L^2(I)}.
\]
In particular, if $\phi\in C_c^\infty(I)$, then $R\phi\in C_c^\infty(\mathbb R)$.

It remains to prove the basis estimate. Set $\kappa_k = \pi k /L$. 
Then
\[
Re_k(x)
=
\sqrt{\frac2L}\int_a^b K_R(x,E)\sin(\kappa_k(E-a))\,dE.
\]
Integrating by parts once in $E$ gives
\[
Re_k(x)
=
\sqrt{\frac{2}{L}}\frac1{\kappa_k}
\left[
K_R(x,a)-(-1)^kK_R(x,b)
+
\int_a^b \partial_EK_R(x,E)\cos(\kappa_k(E-a))\,dE
\right].
\]
Since $K_R$ and $\partial_EK_R$ are smooth in $x$ and compactly supported
in $x$, uniformly for $E\in I$, the same estimate after applying
$\partial_x^j$, for all $0\le j\le q$, gives
\[
\|Re_k\|_{H^q(\mathbb R)}
\le
\frac{C_q}{\kappa_k}.
\]
Because $\mu_k=\kappa_k^2$, this is
\[
\|Re_k\|_{H^q(\mathbb R)}
\le
C_q(1+\mu_k)^{-1/2},
\qquad k\ge1.
\]
This completes the proof.
\end{proof}

\begin{lemma}\label{lem:log-fdd}
For every $\phi\in C_c^\infty(I)$,
$\ip{X_N^{\mathrm{log}}}{\phi}$
converges in law to a mean-zero Gaussian random variable with variance
$V_{\mathrm{log}}(\phi,\phi)$.
\end{lemma}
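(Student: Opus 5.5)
By \eqref{eq:log-linear-stat}, $\ip{X_N^{\mathrm{log}}}{\phi}=\mathcal N_N(F_\phi)$. We split $F_\phi=(1-\eta)F_\phi+B\phi+R\phi$ according to \eqref{eq:log-decomp}; linearity of $\mathcal N_N$ then gives
\[
\mathcal N_N(F_\phi)=\mathcal N_N\bigl((1-\eta)F_\phi\bigr)+\mathcal N_N(B\phi)+\mathcal N_N(R\phi).
\]
Lemma~\ref{lem:tail-negligible-log} shows that the first term tends to zero in $L^2$, hence in probability. By Slutsky's lemma, it therefore suffices to prove that $\mathcal N_N(B\phi+R\phi)$ converges in law to a centered Gaussian with variance $V_{\mathrm{log}}(\phi,\phi)$.

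First we check that $g:=B\phi+R\phi=\eta F_\phi$ is a good test function. Since $\phi\in C_c^\infty(I)$, $F_\phi$ is $C^\infty$: differentiating, $F_\phi'=-\pi\,\mathcal H\phi$ up to sign, which is smooth. So $\eta F_\phi\in C_c^\infty(\R)$, and Proposition~\ref{prop:smooth-clt} applies directly. It gives that $\mathcal N_N(\eta F_\phi)$ converges in law to a centered Gaussian with variance $V(\eta F_\phi,\eta F_\phi)$. (Alternatively, one can apply Theorem~\ref{thm:LS} to $B\phi$, which lies in $H^{1/2+\varepsilon}$ with bulk support by Lemma~\ref{lem:log-operator}, and Proposition~\ref{prop:smooth-clt} to $R\phi\in C_c^\infty$ by Lemma~\ref{lem:log-remainder-smoothing}. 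Joint convergence of the sum would then require a further argument, so we prefer to use the smooth CLT on $\eta F_\phi$ directly.)

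Next we identify the variance. Since $\eta\equiv1$ on $[-3,3]\supset[-2,2]$ and only the restriction to $[-2,2]$ enters in \eqref{eq:ck-def}--\eqref{eq:V-def}, we have $c_k(\eta F_\phi)=c_k(F_\phi)$ for all $k$. Hence $V(\eta F_\phi,\eta F_\phi)=V(F_\phi,F_\phi)$, and Proposition~\ref{prop:log-covariance} gives $V(F_\phi,F_\phi)=V_{\mathrm{log}}(\phi,\phi)$. Combining this with Slutsky's lemma completes the proof.

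The only delicate point is smoothness of $F_\phi$ on the whole real line, in particular near $\partial I$. This is handled by writing $F_\phi(x)=\int \phi(x-y)\log|y|\,dy$: the result is the convolution of a $C_c^\infty$ function with a locally integrable function, and is therefore $C^\infty$ everywhere.
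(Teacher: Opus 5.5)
Your proof is correct and follows essentially the same route as the paper. You discard the tail $(1-\eta)F_\phi$ via Lemma~\ref{lem:tail-negligible-log}, apply the smooth CLT (Proposition~\ref{prop:smooth-clt}) to $\eta F_\phi\in C_c^\infty(\R)$ (smoothness coming from the convolution representation $F_\phi=(\log|\cdot|)*\phi$), and identify the limiting variance by noting that only the restriction to $[-2,2]$ matters and then invoking Proposition~\ref{prop:log-covariance}.
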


\begin{proof}
By Lemma \ref{lem:tail-negligible-log},
\[
\ip{X_N^{\mathrm{log}}}{\phi}
=
\mathcal N_N(\eta F_{\phi}) + o_{L^2}(1).
\]
Since $\phi\in C_c^\infty(I)$, its extension by zero belongs to
$C_c^\infty(\mathbb R)$, and
\[
F_\phi=(\log|\cdot|)*\phi
\]
in the sense of distributions. Hence $F_\phi\in C^\infty(\mathbb R)$.
Since $\eta\in C_c^\infty(\mathbb R)$, it follows that
$\eta F_\phi\in C_c^\infty(\mathbb R)$. Proposition
\ref{prop:smooth-clt} therefore gives
\[
\mathcal N_N(\eta F_\phi)
\Rightarrow
\mathcal N(0,V(\eta F_\phi,\eta F_\phi)).
\]
The Chebyshev coefficients in \eqref{eq:ck-def} depend only on the
restriction of the test function to $[-2,2]$, and $\eta\equiv1$ on
$[-2,2]$. Therefore
\[
V(\eta F_\phi,\eta F_\phi)=V(F_\phi,F_\phi).
\]
By Proposition \ref{prop:log-covariance},
\[
V(F_\phi,F_\phi)=V_{\mathrm{log}}(\phi,\phi).
\]
Combining this with the $o_{L^2}(1)$ error proves the claim.
\end{proof}

\subsection{Uniform Sobolev bounds}

\begin{lemma}\label{lem:log-tail-basis}
For every $k\ge1$, let
\[
T_k^{\mathrm{log}}=(1-\eta)F_{e_k}.
\]
Then there exists $C>0$ such that
\begin{equation}\label{eq:log-tail-basis-var}
\sup_{N\ge1}\Var\bigl(\mathcal N_N(T_k^{\mathrm{log}})\bigr)\le C k^{-2}.
\end{equation}
\end{lemma}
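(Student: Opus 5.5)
We follow the strategy of Lemma~\ref{lem:tail-negligible-log}, but now track the dependence on $k$. The goal is a bound of the form $C N^2 k^{-2} e^{-c(\log N)^2}$, which is at most $Ck^{-2}$ uniformly in $N$.

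\emph{Pointwise bound.} We first show that for $|x|\ge 3$,
\[
|F_{e_k}(x)|\le C k^{-1}\log(2+|x|).
\]
To see this, integrate by parts once in $E$, exactly as in the proof of Lemma~\ref{lem:log-remainder-smoothing}:
\[
F_{e_k}(x)=\sqrt{\tfrac2L}\,\kappa_k^{-1}\Big[\log|x-a|-(-1)^k\log|x-b|+\int_a^b \partial_E\log|x-E|\,\cos(\kappa_k(E-a))\,dE\Big],
\]
with $\kappa_k=\pi k/L$. For $|x|\ge 3$ and $E\in I\subset(-2,2)$ we have $|x-E|\ge 1$. Hence the boundary terms are bounded by $C\log(2+|x|)$, and $|\partial_E\log|x-E||=|x-E|^{-1}\le 1$. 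Since $T_k^{\mathrm{log}}=(1-\eta)F_{e_k}$ vanishes on $[-3,3]$ and $|1-\eta|\le C$, we obtain
\[
|T^{\mathrm{log}}_k(x)|\le C k^{-1}\log(2+|x|)\,\1_{\{|x|>3\}}.
\]

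\emph{Second-moment estimate.} From the pointwise bound,
\[
|\Tr T_k^{\mathrm{log}}(H_N)|\le C N k^{-1}\log(2+\|H_N\|)\,\1_{\{\|H_N\|>3\}}.
\]
Then
\[
\Var(\mathcal N_N(T_k^{\mathrm{log}}))\le \E|\Tr T_k^{\mathrm{log}}(H_N)|^2\le C N^2 k^{-2}\,\E\big[\log^2(2+\|H_N\|)\1_{\{\|H_N\|>3\}}\big].
\]
Applying Cauchy--Schwarz together with \eqref{eq:norm-moment} and \eqref{eq:norm-tail} from Proposition~\ref{prop:norm-tail} gives
\[
\Var(\mathcal N_N(T_k^{\mathrm{log}}))\le C N^2 k^{-2} e^{-c(\log N)^2/2}.
\]
Since $\sup_{N\ge1} N^2 e^{-c(\log N)^2/2}<\infty$, the bound \eqref{eq:log-tail-basis-var} follows with a constant independent of $k$ and $N$.

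\emph{Main point of care.} The only step needing attention is the pointwise bound: the constant in the $k^{-1}$ decay must be uniform in $x$ up to the $\log(2+|x|)$ growth. This holds because $x$ stays at distance at least $1$ from $I$, so the log kernel is smooth there. The rest is a direct repetition of Lemma~\ref{lem:tail-negligible-log}.
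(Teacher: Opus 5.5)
Your proposal is correct and follows essentially the same argument as the paper: integrate by parts once to get $|F_{e_k}(x)|\le Ck^{-1}\log(2+|x|)$ for $|x|>3$, bound the trace on the event $\{\|H_N\|>3\}$, and use Cauchy--Schwarz with Proposition~\ref{prop:norm-tail} to absorb the $N^2$ factor. The only difference is cosmetic. You use $\Var(\mathcal N_N)=\Var(\Tr)\le\E|\Tr|^2$ directly, whereas the paper uses $\|\mathcal N_N\|_2\le 2\|\Tr\|_2$, and this does not affect the conclusion.
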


\begin{proof}

Since $\eta\equiv1$ on $[-3,3]$, it suffices to estimate $F_{e_k}(x)$ for
$|x|>3$. For such $x$ and $E\in I\Subset(-2,2)$, the function
$E\mapsto \log|x-E|$ is smooth and
\[
\big |\log|x-a||+|\log|x-b|\big|\le C\log(2+|x|),
\qquad
\int_a^b \frac{dE}{|x-E|}\le C.
\]
Writing $\kappa_k=\pi k/L$, integration by parts gives
\[
F_{e_k}(x)
=
\sqrt{\frac2L}\frac1{\kappa_k}
\left[
\log|x-a|-(-1)^k\log|x-b|
+
\int_a^b \frac{\cos(\kappa_k(E-a))}{E-x}\,dE
\right].
\]
Therefore
\[
|F_{e_k}(x)|
\le Ck^{-1}\log(2+|x|),
\qquad |x|>3.
\]
This implies
\[
|\Tr T_k^{\mathrm{log}}(H_N)|
\le
C N k^{-1}\log(2+\|H_N\|)\,\1_{\{\|H_N\|>3\}}.
\]
Moreover,
\[
\|\mathcal N_N(T_k^{\mathrm{log}})\|_2
\le
2\|\Tr T_k^{\mathrm{log}}(H_N)\|_2 .
\]
Hence
\begin{align*}
\Var\bigl(\mathcal N_N(T_k^{\mathrm{log}})\bigr)
&\le
\E |\mathcal N_N(T_k^{\mathrm{log}})|^2
\\
&\le
C k^{-2} N^2 \E\!\left[\log^2(2+\|H_N\|)\1_{\{\|H_N\|>3\}}\right]
\\
&\le
C k^{-2} N^2 \Bigl(\E \log^4(2+\|H_N\|)\Bigr)^{1/2}
\Pbb(\|H_N\|>3)^{1/2}
\\
&\le
C k^{-2},
\end{align*}
for all $N\ge1$, by Proposition \ref{prop:norm-tail}. This proves \eqref{eq:log-tail-basis-var}.
\end{proof}

\begin{lemma}\label{lem:log-coeff-variance}
Fix $0<\varepsilon<1/2$. There exists $C_{\varepsilon}>0$ such that for all
$N,k\ge1$,
\[
\Var\bigl(\ip{X_N^{\mathrm{log}}}{e_k}\bigr)
\le
C_{\varepsilon}(1+\mu_k)^{-1/2+\varepsilon}
+
C_{\varepsilon}k^{-2}.
\]
\end{lemma}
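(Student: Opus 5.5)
We use \eqref{eq:log-linear-stat} with $\phi=e_k$ (valid since $e_k\in L^\infty(I)$), which gives $\ip{X_N^{\mathrm{log}}}{e_k}=\mathcal N_N(F_{e_k})$. We then split the test function as
\[
F_{e_k}=Be_k+Re_k+T_k^{\mathrm{log}},
\]
using \eqref{eq:log-decomp} and the notation of Lemma~\ref{lem:log-tail-basis}. Since $\mathcal N_N$ is linear and $\Var(X+Y+Z)\le 3(\Var X+\Var Y+\Var Z)$, it suffices to bound the variance of each of the three centered linear statistics separately, uniformly in $N$.

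\emph{Bulk part.} We have $Be_k=\chi\,\mathcal F e_k$, which is supported in $\supp\chi\subset J\Subset(-2,2)$, so it lies in some $(-2+\kappa,2-\kappa)$ with $\kappa$ depending only on $J$. Lemma~\ref{lem:log-operator} gives
\[
\|Be_k\|_{H^{1/2+\varepsilon}(\R)}\le C\|e_k\|_{H^{-1/2+\varepsilon}(I)}.
\]
By the definition of the negative Sobolev norm and orthonormality of the $e_k$, we have $\|e_k\|_{H^{-1/2+\varepsilon}(I)}^2=(1+\mu_k)^{-1/2+\varepsilon}$. The variance bound \eqref{eq:LS-variance} of Theorem~\ref{thm:LS} then yields
\[
\Var\bigl(\mathcal N_N(Be_k)\bigr)\le C_\varepsilon(1+\mu_k)^{-1/2+\varepsilon},
\]
uniformly in $N$.

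\emph{Intermediate part.} The function $Re_k$ is smooth, but its support need not lie in the bulk, since $\eta-\chi$ lives near and beyond $\pm2$. We therefore use Theorem~\ref{thm:SW} with $\delta=1$, together with Lemma~\ref{lem:log-remainder-smoothing} at $q=2$:
\[
\Var\bigl(\mathcal N_N(Re_k)\bigr)\le C\|Re_k\|_{H^2(\R)}^2\le C(1+\mu_k)^{-1}.
\]
Since $\varepsilon<1/2$, this is at most $C(1+\mu_k)^{-1/2+\varepsilon}$. Alternatively, because $\mu_k\asymp k^2$, it can be absorbed into the $k^{-2}$ term.

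\emph{Tail part.} Lemma~\ref{lem:log-tail-basis} gives $\Var\bigl(\mathcal N_N(T_k^{\mathrm{log}})\bigr)\le Ck^{-2}$ directly.

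Summing the three bounds proves the lemma. The step needing the most care is the bulk part: Theorem~\ref{thm:LS} requires real-valued test functions in $H^{1/2+\varepsilon}(\R)$ with support in a fixed bulk window. Both conditions hold here because $e_k$ is real, $\chi$ is fixed, and Lemma~\ref{lem:log-operator} supplies the Sobolev regularity. The constant $C_{\varepsilon,\kappa}$ is then uniform in $k$ and $N$.
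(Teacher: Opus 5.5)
Your proposal is correct and follows the paper's proof essentially verbatim. You use the same splitting $F_{e_k}=Be_k+Re_k+T_k^{\mathrm{log}}$, then Theorem~\ref{thm:LS} with Lemma~\ref{lem:log-operator} for the bulk piece, Theorem~\ref{thm:SW} with Lemma~\ref{lem:log-remainder-smoothing} for the smooth remainder, and Lemma~\ref{lem:log-tail-basis} for the tail. The only cosmetic difference is that you apply Theorem~\ref{thm:SW} with $\delta=1$ and an $H^2$ bound instead of $\delta=\varepsilon$ and an $H^{1+\varepsilon}$ bound, which is harmless because Lemma~\ref{lem:log-remainder-smoothing} holds for every $q\ge0$.
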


\begin{proof}
By \eqref{eq:log-linear-stat} and the decomposition
\[
F_{e_k}=Be_k+Re_k+T_k^{\mathrm{log}},
\]
we have
\[
\ip{X_N^{\mathrm{log}}}{e_k}
=
\mathcal N_N(Be_k)+\mathcal N_N(Re_k)+\mathcal N_N(T_k^{\mathrm{log}}).
\]
Hence
\[
\Var\bigl(\ip{X_N^{\mathrm{log}}}{e_k}\bigr)
\le
3\Var(\mathcal N_N(Be_k))
+3\Var(\mathcal N_N(Re_k))
+3\Var(\mathcal N_N(T_k^{\mathrm{log}})).
\]

Since $Be_k$ is supported in $J\Subset(-2,2)$, Theorem \ref{thm:LS} and Lemma \ref{lem:log-operator} give
\[
\Var(\mathcal N_N(Be_k))
\le
C_\varepsilon \|Be_k\|_{H^{1/2+\varepsilon}(\R)}^2
\le
C_\varepsilon \|e_k\|_{H^{-1/2+\varepsilon}(I)}^2
=
C_\varepsilon (1+\mu_k)^{-1/2+\varepsilon}.
\]
For the smooth remainder, Theorem \ref{thm:SW} and Lemma
\ref{lem:log-remainder-smoothing} yield
\[
\Var(\mathcal N_N(Re_k))
\le
C_{\varepsilon}\|Re_k\|_{H^{1+\varepsilon}(\R)}^2
\le
C_{\varepsilon}(1+\mu_k)^{-1}
\le
C_{\varepsilon}k^{-2}.
\]
The tail term is bounded by Lemma \ref{lem:log-tail-basis}. Combining the three
estimates gives
\[
\Var\bigl(\ip{X_N^{\mathrm{log}}}{e_k}\bigr)
\le
C_{\varepsilon}(1+\mu_k)^{-1/2+\varepsilon}
+
C_{\varepsilon}k^{-2}.
\]
This implies the conclusion after increasing the constant
$C_{\varepsilon}$.
\end{proof}

\begin{lemma}\label{lem:log-Hr}
For every $r>0$,
\[
\sup_{N\ge1}\E \|X_N^{\mathrm{log}}\|_{H^{-r}(I)}^2 < \infty.
\]
\end{lemma}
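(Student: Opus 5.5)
The plan is to expand the norm in the Dirichlet basis and sum the coefficient variance bounds from Lemma~\ref{lem:log-coeff-variance}. By definition of the $H^{-r}(I)$ norm and monotone convergence,
\[
\E\|X_N^{\mathrm{log}}\|_{H^{-r}(I)}^2=\sum_{k\ge1}(1+\mu_k)^{-r}\,\E\,\ip{X_N^{\mathrm{log}}}{e_k}^2 .
\]
Here $L_N(E)$ is a.s.\ in $L^2(I)$ because $\log|\lambda_j-E|$ is square integrable in $E$, and $\E L_N\equiv0$. Hence $\ip{X_N^{\mathrm{log}}}{e_k}$ is centered (exchange expectation and integral by Fubini), and its second moment equals its variance. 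Before doing this I would note that $\E\int_I L_N^2\,dE<\infty$ for each fixed $N$, so the Parseval expansion and Fubini are justified. This follows from the moment bound \eqref{eq:norm-moment} together with a crude estimate $\E\int_I\log^2|\lambda_j-E|\,dE\le C(1+\E\log^2(2+\|H_N\|))$. Since the claim is only uniformity in $N$, finiteness for each $N$ is enough here.

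Next I apply Lemma~\ref{lem:log-coeff-variance} with a fixed $\varepsilon\in(0,1/2)$ chosen so that $\varepsilon<r$. This gives, uniformly in $N$,
\[
\E\|X_N^{\mathrm{log}}\|_{H^{-r}(I)}^2\le C_\varepsilon\sum_{k\ge1}(1+\mu_k)^{-r-1/2+\varepsilon}+C_\varepsilon\sum_{k\ge1}(1+\mu_k)^{-r}k^{-2}.
\]
Since $\mu_k\asymp k^2$, the first sum behaves like $\sum_k k^{-1-2(r-\varepsilon)}$, which converges because $\varepsilon<r$. The second sum is bounded by $\sum_k k^{-2}<\infty$. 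This yields the claim for every $r>0$. For $r\ge 1/2$ one can take any $\varepsilon$, and the bound is in any case monotone in $r$.

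The only delicate point is the choice $\varepsilon<r$. The $H^{1/2+\varepsilon}$ variance bound of Theorem~\ref{thm:LS}, transferred through Lemma~\ref{lem:log-operator}, produces decay $k^{-1+2\varepsilon}$ of the coefficient variances, which is just short of summable. The Sobolev weight $(1+\mu_k)^{-r}$ supplies the missing $k^{-2r}$. This is exactly why the result holds for every $r>0$ but not $r=0$. Everything else is bookkeeping already handled by the earlier lemmas.
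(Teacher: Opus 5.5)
Your proposal is correct and matches the paper's proof: expand the $H^{-r}(I)$ norm in the Dirichlet basis, use Tonelli, apply Lemma~\ref{lem:log-coeff-variance} with $\varepsilon\in(0,\min\{r,1/2\})$, and sum the two resulting series. The extra justification you give is a harmless addition that the paper leaves implicit; it consists of $L_N\in L^2(I)$ almost surely with finite second moment, and centering of $\ip{X_N^{\mathrm{log}}}{e_k}$ so that second moment equals variance.
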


\begin{proof}
By Tonelli's theorem and the definition of the norm for $H^{-r}(I)$,
\[
\E \|X_N^{\mathrm{log}}\|_{H^{-r}(I)}^2
=
\sum_{k\ge1}(1+\mu_k)^{-r}\Var\bigl(\ip{X_N^{\mathrm{log}}}{e_k}\bigr).
\]
Apply Lemma \ref{lem:log-coeff-variance} with some
$\varepsilon\in(0,\min\{r,1/2\})$. Since $\mu_k\asymp k^2$,
\[
\sum_{k\ge1}(1+\mu_k)^{-r-1/2+\varepsilon}
\asymp
\sum_{k\ge1}k^{-2r-1+2\varepsilon}<\infty,
\]
while the remaining series
\[
\sum_{k\ge1}(1+\mu_k)^{-r}k^{-2}
\]
also converges.
\end{proof}

\subsection{Proof of Theorem~\ref{thm:log-main}}

\begin{proof}[Proof of Theorem~\ref{thm:log-main}]
Fix $s>0$ and recall that $\sigma$ and $\mathcal D_\sigma$ were fixed at the beginning of this section.

Choose $r\in(0,\sigma)$. By Lemma \ref{lem:log-Hr} and Proposition
\ref{prop:tightness-from-stronger}, the sequence $(X_N^{\mathrm{log}})$
is tight in $H^{-\sigma}(I)$. By Lemma~\ref{lem:log-fdd}, for every
$\phi\in\mathcal D_\sigma$, $\ip{X_N^{\mathrm{log}}}{\phi}$ converges in
law to a centered Gaussian random variable with variance
$V_{\mathrm{log}}(\phi,\phi)$. Proposition~\ref{prop:convergence-criterion}
therefore yields convergence in law in $H^{-\sigma}(I)$. We denote the limit
by $X_\sigma^{\mathrm{log}}$.

We next identify this limit. Let $\phi\in C_c^\infty(I)$. Since the map
$u\mapsto \ip{u}{\phi}$ is continuous on $H^{-\sigma}(I)$, the convergence
$X_N^{\mathrm{log}}\Rightarrow X_\sigma^{\mathrm{log}}$ implies $\ip{X_N^{\mathrm{log}}}{\phi}
    \Rightarrow
    \ip{X_\sigma^{\mathrm{log}}}{\phi}$. 
On the other hand, Lemma~\ref{lem:log-fdd} gives
\[
    \ip{X_N^{\mathrm{log}}}{\phi}
    \Rightarrow
    \mathcal N(0,V_{\mathrm{log}}(\phi,\phi)).
\]
Hence
\[
    \E e^{i\ip{X_\sigma^{\mathrm{log}}}{\phi}}
    =
    \exp\!\left(-\frac12 V_{\mathrm{log}}(\phi,\phi)\right),
    \qquad \phi\in C_c^\infty(I).
\]
By the continuity of characteristic functionals and of
$V_{\mathrm{log}}$ on $H_0^\sigma(I)$, and by the density of
$C_c^\infty(I)$ in $H_0^\sigma(I)$, the same identity holds for every
$h\in H_0^\sigma(I)$. Thus $X_\sigma^{\mathrm{log}}$ is centered Gaussian,
and its covariance form is $V_{\mathrm{log}}$ by polarization.

Since $\sigma<s$, the embedding
\[
    \iota_{\sigma,s}:H^{-\sigma}(I)\hookrightarrow H^{-s}(I)
\]
is continuous by Proposition~\ref{prop:compact-embedding}. Hence, by the
continuous mapping theorem,
\[
    X_N^{\mathrm{log}}
    =
    \iota_{\sigma,s}X_N^{\mathrm{log}}
    \Rightarrow
    \iota_{\sigma,s}X_\sigma^{\mathrm{log}}
\]
in law in $H^{-s}(I)$. We define $
    X^{\mathrm{log}} =\iota_{\sigma,s}X_\sigma^{\mathrm{log}}$.
Since $X_\sigma^{\mathrm{log}}$ is centered Gaussian with covariance form
$V_{\mathrm{log}}$ on $H_0^\sigma(I)$, its image
$X^{\mathrm{log}}=\iota_{\sigma,s}X_\sigma^{\mathrm{log}}$ is centered
Gaussian in $H^{-s}(I)$, and for $h,g\in H_0^s(I)\subset H_0^\sigma(I)$,
\[
    \E\bigl[\ip{X^{\mathrm{log}}}{h}\ip{X^{\mathrm{log}}}{g}\bigr]
    =
    V_{\mathrm{log}}(h,g),
\]
as claimed.
\end{proof}

\section{The Counting Field}\label{s:countingfieldproof}

The structure of this section parallels that of Section~\ref{s:logfieldproof} and  concludes with a proof of Theorem~\ref{thm:count-main}. 
We retain the notation $\mathcal D_\sigma$ for the countable set fixed at the beginning of the previous section. 
\subsection{Preliminary estimates}

For $\phi\in L^\infty(I)$, Fubini's theorem applied to the step functions $x\mapsto \1_{\{x\le E\}}$ gives
\begin{equation}\label{eq:count-linear-stat}
\ip{X_N^{\mathrm{cnt}}}{\phi}
=
\mathcal N_N(G_\phi).
\end{equation}

\begin{lemma}\label{lem:tail-negligible-count}
For every $\phi\in C_c^\infty(I)$, we have the $L^2$-limit 
\[
\lim_{N \rightarrow \infty} 
\mathcal N_N\big((1-\eta)G_\phi\big) = 0.
\]
\end{lemma}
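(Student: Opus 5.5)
The plan follows the proof of Lemma~\ref{lem:tail-negligible-log} almost verbatim, with the simplification that $G_\phi$ is bounded rather than logarithmically growing. First, I would record the elementary structure of $G_\phi$ from \eqref{eq:def-Gphi}. Since $\phi$ is extended by zero outside $I=[a,b]$, we have $G_\phi(x)=0$ for $x>b$ and $G_\phi(x)=\int_I\phi(E)\,dE$ for $x<a$. In all cases
\[
|G_\phi(x)|\le \|\phi\|_{L^1(I)}=:C_\phi ,\qquad x\in\R .
\]
Because $\eta\equiv1$ on $[-3,3]$, the function $(1-\eta)G_\phi$ vanishes on $[-3,3]$ and is bounded by $C'_\phi=C_\phi(1+\|\eta\|_\infty)$. (Indeed, it equals the constant $\int_I\phi$ times $(1-\eta)$ on $(-\infty,-3)$ and vanishes on $(3,\infty)$, but only boundedness is needed.)

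Next, I would bound the trace deterministically. If $\|H_N\|\le3$, every eigenvalue lies in $[-3,3]$, so $\Tr (1-\eta)G_\phi(H_N)=0$. In general the trace is at most $N C'_\phi$ in absolute value. Hence
\[
\bigl|\Tr (1-\eta)G_\phi(H_N)\bigr|\le N C'_\phi\,\1_{\{\|H_N\|>3\}} .
\]
Centering at most doubles the $L^2$ norm, so
\[
\E\bigl|\mathcal N_N\bigl((1-\eta)G_\phi\bigr)\bigr|^2\le 4 (C'_\phi)^2 N^2\,\Pbb(\|H_N\|>3)\le C N^2 e^{-c(\log N)^2}
\]
by \eqref{eq:norm-tail} of Proposition~\ref{prop:norm-tail}. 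The right-hand side tends to $0$, which gives the $L^2$ convergence.

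There is essentially no obstacle here. Unlike the log case, no Cauchy--Schwarz step with $\E\log^4(2+\|H_N\|)$ is needed, because $G_\phi$ is bounded. The only point requiring care is the correct description of $G_\phi$ outside $I$, in particular that it is constant, not zero, to the left of $I$; this is harmless since only boundedness enters.
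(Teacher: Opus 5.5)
Your proposal is correct and follows the paper's proof: both use that $(1-\eta)G_\phi$ is bounded and vanishes on $[-3,3]$, which gives $|\Tr (1-\eta)G_\phi(H_N)|\le CN\1_{\{\|H_N\|>3\}}$. Both then double the $L^2$ norm for centering and conclude with the tail bound \eqref{eq:norm-tail}. Your remark that $G_\phi$ is constant, not zero, to the left of $I$ is accurate, and, as you note, only boundedness matters.
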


\begin{proof}
The function $(1-\eta)G_\phi$ vanishes on $[-3,3]$ and is bounded. Hence
\[
\left|\Tr (1-\eta)G_\phi(H_N)\right|
\le
N\|(1-\eta)G_\phi\|_\infty\,\1_{\{\|H_N\|>3\}}.
\]
Moreover,
\[
\left\|\mathcal N_N\bigl((1-\eta)G_\phi\bigr)\right\|_2
\le
2\left\|\Tr (1-\eta)G_\phi(H_N)\right\|_2 .
\]
Therefore
\[
\E\left|\mathcal N_N\bigl((1-\eta)G_\phi\bigr)\right|^2
\le
C_\phi N^2 \Pbb(\|H_N\|>3)
\le
C_\phi N^2 e^{-c(\log N)^2}
\]
by Proposition \ref{prop:norm-tail}. In particular, $\mathcal N_N((1-\eta)G_\phi)\to0$ in $L^2$.
\end{proof}

\begin{lemma}\label{lem:count-fdd}
For every $\phi \in C_c^\infty(I)$,
$\ip{X_N^{\mathrm{cnt}}}{\phi}$
converges in law to a mean-zero Gaussian with variance $V_{\mathrm{cnt}}(\phi,\phi)$.
\end{lemma}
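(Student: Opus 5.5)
By \eqref{eq:count-linear-stat} and Lemma~\ref{lem:tail-negligible-count}, we have
\[
\ip{X_N^{\mathrm{cnt}}}{\phi}=\mathcal N_N(\eta G_\phi)+o_{L^2}(1).
\]
So it suffices to show that $\mathcal N_N(\eta G_\phi)$ is asymptotically Gaussian with variance $V(G_\phi,G_\phi)$. The conclusion then follows from Proposition~\ref{prop:count-covariance} and Slutsky's lemma, exactly as in Lemma~\ref{lem:log-fdd}.

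First we check regularity of the test function. Since $\phi\in C_c^\infty(I)$, the function $G_\phi(x)=\int_x^\infty\phi(E)\,dE$ is $C^\infty$, with $G_\phi'=-\phi$. It is constant outside $I$: it equals $\int_I\phi$ to the left of $I$ and $0$ to the right. Hence $\eta G_\phi\in C_c^\infty(\R)$, even though $G_\phi$ itself is not compactly supported. Proposition~\ref{prop:smooth-clt} then gives
\[
\mathcal N_N(\eta G_\phi)\Rightarrow\mathcal N\bigl(0,V(\eta G_\phi,\eta G_\phi)\bigr).
\]

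Next we remove the cutoff from the variance. The Chebyshev coefficients \eqref{eq:ck-def} depend only on the restriction of a function to $[-2,2]$, and $\eta\equiv1$ there. Therefore
\[
V(\eta G_\phi,\eta G_\phi)=V(G_\phi,G_\phi)=V_{\mathrm{cnt}}(\phi,\phi),
\]
where the last equality is Proposition~\ref{prop:count-covariance}. Adding back the $o_{L^2}(1)$ error does not change the limit in law.

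The only point needing care is that Proposition~\ref{prop:smooth-clt} requires a compactly supported smooth function, while $G_\phi$ does not decay at $-\infty$. This is handled by splitting $G_\phi=\eta G_\phi+(1-\eta)G_\phi$. The piece $(1-\eta)G_\phi$ vanishes on $[-3,3]$ and contributes nothing in $L^2$ by Lemma~\ref{lem:tail-negligible-count}, which rests on the norm tail bound of Proposition~\ref{prop:norm-tail}. No Sobolev estimates are needed at this stage, since $\phi$ is smooth.
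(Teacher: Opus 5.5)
Your proposal is correct and follows the paper's proof essentially step for step: reduce to $\mathcal N_N(\eta G_\phi)$ via Lemma~\ref{lem:tail-negligible-count}, apply Proposition~\ref{prop:smooth-clt} to $\eta G_\phi\in C_c^\infty(\R)$, drop $\eta$ from the variance because it does not affect the Chebyshev coefficients, and identify the result with $V_{\mathrm{cnt}}(\phi,\phi)$ via Proposition~\ref{prop:count-covariance}. Your extra justification that $G_\phi$ is smooth with $G_\phi'=-\phi$ and constant outside $I$ (so $\eta G_\phi\in C_c^\infty(\R)$) fills in a detail the paper only asserts.
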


\begin{proof}
By Lemma \ref{lem:tail-negligible-count},
\[
\ip{X_N^{\mathrm{cnt}}}{\phi}
=
\mathcal N_N(\eta G_{\phi})+o_{L^2}(1).
\]
Since $\eta G_{\phi}\in C_c^\infty(\R)$, Proposition \ref{prop:smooth-clt} implies that $\mathcal N_N(\eta G_{\phi})$ 
converges to a centered Gaussian with variance $V(\eta G_{\phi},\eta G_{\phi})$. 
Because $\eta\equiv1$ on $[-2,2]$, the Chebyshev coefficients are unchanged by inserting $\eta$, so
\[
V(\eta G_{\phi},\eta G_{\phi})=V(G_{\phi},G_{\phi}).
\]
Proposition \ref{prop:count-covariance} identifies this with $V_{\mathrm{cnt}}(\phi,\phi)$.
\end{proof}

\subsection{Uniform Sobolev bounds}

For $f\in L^2(I)$ we write
\[
G_f(x)=\int_x^\infty f(E)\,dE.
\]
We decompose
\[
\eta G_f = \chi G_f + (\eta-\chi)G_f.
\]

\begin{lemma}\label{lem:count-bulk-norms}
For every $k\ge1$,
\begin{equation}\label{eq:count-L2}
\|\chi G_{e_k}\|_{L^2(\R)}\le Ck^{-1}
\end{equation}
and
\begin{equation}\label{eq:count-H1}
\|\chi G_{e_k}\|_{H^1(\R)}\le C.
\end{equation}
Consequently, for every $0<\varepsilon<1/2$,
\begin{equation}\label{eq:count-Hhalf}
\|\chi G_{e_k}\|_{H^{1/2+\varepsilon}(\R)}^2\le C_\varepsilon k^{-1+2\varepsilon}.
\end{equation}
\end{lemma}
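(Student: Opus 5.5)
We will compute $G_{e_k}$ explicitly and then interpolate. Write $\kappa_k=\pi k/L$. Since $e_k$ is supported in $I$,
\[
G_{e_k}(x)=\int_{\max(x,a)}^{b} e_k(E)\,dE,
\]
so $G_{e_k}$ is constant for $x\le a$ and vanishes for $x\ge b$. For $x\in I$ we integrate the sine explicitly:
\[
G_{e_k}(x)=\sqrt{\tfrac{2}{L}}\,\frac{1}{\kappa_k}\Bigl[\cos(\kappa_k(x-a))-(-1)^k\Bigr].
\]
For $x\le a$ the constant value is $\sqrt{2/L}\,\kappa_k^{-1}(1-(-1)^k)$.

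From this formula, $\|G_{e_k}\|_{L^\infty(\R)}\le 2\sqrt{2/L}\,\kappa_k^{-1}\le Ck^{-1}$. Multiplying by $\chi$, which is supported in the compact set $J$, gives $\|\chi G_{e_k}\|_{L^2(\R)}\le |J|^{1/2}Ck^{-1}$; this is \eqref{eq:count-L2}.

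For \eqref{eq:count-H1}, $G_{e_k}$ is absolutely continuous with $G_{e_k}'=-e_k\1_I$ almost everywhere. Then
\[
(\chi G_{e_k})'=\chi' G_{e_k}-\chi e_k\1_I .
\]
The first term is bounded in $L^2$ by $Ck^{-1}$ by the previous step. The second satisfies $\|\chi e_k\|_{L^2}\le\|e_k\|_{L^2(I)}=1$, since $\chi\equiv 1$ on $I$. Hence $\|\chi G_{e_k}\|_{H^1(\R)}\le C$ uniformly in $k$.

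Finally, \eqref{eq:count-Hhalf} follows by interpolating between $H^0$ and $H^1$ on the Fourier side. Set $\theta=1/2+\varepsilon$ and apply Hölder's inequality with exponents $1/(1-\theta)$ and $1/\theta$ to $\int(1+|\xi|^2)^{\theta}|\widehat u|^2 = \int |\widehat u|^{2(1-\theta)}\cdot\bigl((1+|\xi|^2)|\widehat u|^2\bigr)^{\theta}$. This gives
\[
\|u\|_{H^\theta}^2\le \|u\|_{L^2}^{2(1-\theta)}\|u\|_{H^1}^{2\theta},
\]
with Plancherel constants absorbed into $C$. Substituting the two bounds yields $C k^{-2(1/2-\varepsilon)}=Ck^{-1+2\varepsilon}$.

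No step is a real obstacle; the one point needing care is checking that the jump of $e_k\1_I$ at the endpoints produces no delta mass in the derivative. This holds because $G_{e_k}$ itself is continuous: it is the integral of an $L^1$ function.
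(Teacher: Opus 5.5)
Your proof is correct and follows essentially the same route as the paper. You compute $G_{e_k}$ explicitly to get an $\cO(k^{-1})$ bound, obtain the uniform $H^1$ bound from $(\chi G_{e_k})'=\chi' G_{e_k}-\chi e_k$, and finish with H\"older interpolation on the Fourier side at $\theta=1/2+\varepsilon$. Your remark that $G_{e_k}$ is absolutely continuous, so no delta masses appear at the endpoints of $I$, makes explicit a point the paper uses silently.
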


\begin{proof}
For $x\in[a,b]$,
\[
G_{e_k}(x)=\int_x^b e_k(E)\,dE
=
\sqrt{\frac{2}{L}}\frac{L}{\pi k}
\left(
\cos\left(\frac{\pi k(x-a)}{L}\right)-(-1)^k
\right),
\]
while for $x<a$,
\[
G_{e_k}(x)=\int_a^b e_k(E)\,dE=\cO(k^{-1}),
\]
and for $x>b$ we have $G_{e_k}(x)=0$. Since $\chi$ is compactly supported, this immediately gives \eqref{eq:count-L2}.

For the $H^1$ bound, note that
\[
\partial_x(\chi G_{e_k})
=
\chi' G_{e_k}-\chi e_k.
\]
The first term is $\cO(k^{-1})$ in $L^2$ by \eqref{eq:count-L2}, and the second is bounded uniformly in $L^2$ because $\|e_k\|_{L^2(I)}=1$. This proves \eqref{eq:count-H1}. Finally, \eqref{eq:count-Hhalf} follows from the well-known interpolation inequality
\[
\|u\|_{H^\theta(\R)}\le C_\theta \|u\|_{L^2(\R)}^{1-\theta}\|u\|_{H^1(\R)}^\theta,
\qquad 0\le\theta\le1,
\]
with the choice $\theta=1/2+\varepsilon$ (which is immediate from H\"{o}lder's inequality).
\end{proof}

\begin{lemma}\label{lem:count-remainder-smoothing}
For every $q\ge0$, the operator
\[
f\mapsto (\eta-\chi)G_f
\]
sends $L^2(I)$ continuously into $H^q(\mathbb R)$. Moreover, for every
$q\ge0$ there exists $C_q>0$ such that
\begin{equation}\label{eq:count-remainder-basis}
\|(\eta-\chi)G_{e_k}\|_{H^q(\mathbb R)}
\le C_q(1+\mu_k)^{-1/2},
\qquad k\ge1.
\end{equation}
\end{lemma}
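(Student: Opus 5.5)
The plan is to copy the argument of Lemma~\ref{lem:log-remainder-smoothing}, with the kernel $\log|x-E|$ replaced by the Heaviside kernel $\1_{\{x\le E\}}$. The key observation is geometric. Since $\chi\equiv1$ on a neighborhood of $I$ and $\eta\equiv1$ on $[-3,3]\supset I$, the function $\eta-\chi$ is smooth, compactly supported, and vanishes on an open set $U\supset I=[a,b]$. Hence $\supp(\eta-\chi)\subset(-\infty,a-\delta]\cup[b+\delta,\infty)$ for some $\delta>0$.

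On that support, $G_f$ is constant for $f\in L^2(I)$:
\[
G_f(x)=\int_I f(E)\,dE \quad \text{for } x<a,
\qquad
G_f(x)=0 \quad \text{for } x>b.
\]
Therefore
\[
(\eta-\chi)G_f=\Big(\int_I f(E)\,dE\Big)\,\theta(x),
\qquad
\theta:=(\eta-\chi)\1_{(-\infty,a)} .
\]
The function $\theta$ belongs to $C_c^\infty(\R)$, because $\eta-\chi$ vanishes on a neighborhood of $a$, so the truncation introduces no singularity. This reduces the operator to a rank-one map.

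With this representation, continuity from $L^2(I)$ into $H^q(\R)$ is immediate. By Cauchy--Schwarz, $|\int_I f|\le L^{1/2}\|f\|_{L^2(I)}$, so
\[
\|(\eta-\chi)G_f\|_{H^q(\R)}\le L^{1/2}\|\theta\|_{H^q(\R)}\|f\|_{L^2(I)}
\]
for every $q\ge0$. There is no need to differentiate under the integral as in Lemma~\ref{lem:log-remainder-smoothing}.

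For the basis estimate \eqref{eq:count-remainder-basis}, it suffices to compute $\int_I e_k$ explicitly:
\[
\int_a^b e_k(E)\,dE=\sqrt{\tfrac{2}{L}}\,\frac{L}{\pi k}\bigl(1-(-1)^k\bigr).
\]
This is $\cO(k^{-1})$, in line with the computation for $x<a$ in the proof of Lemma~\ref{lem:count-bulk-norms}. Since $(1+\mu_k)^{1/2}\asymp k$, we obtain $\|(\eta-\chi)G_{e_k}\|_{H^q}\le C_q\|\theta\|_{H^q}k^{-1}\le C_q(1+\mu_k)^{-1/2}$.

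The only step requiring care is checking that $\theta$ is genuinely smooth. This comes down to confirming that $\eta-\chi$ vanishes on an open neighborhood of $[a,b]$, not merely on $[a,b]$ itself. That in turn follows from the hypotheses in \eqref{eq:cutoffs}: $\chi\equiv1$ on a neighborhood of $I$, and $\eta\equiv1$ on $[-3,3]$, which contains a neighborhood of $I\Subset(-2,2)$.
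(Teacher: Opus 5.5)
Your proof is correct and matches the paper's. The paper likewise writes $(\eta-\chi)G_{e_k}=\bigl(\int_I e_k\bigr)(\eta-\chi)\1_{\{x<a\}}$ and computes $\int_I e_k$ explicitly. The one difference is in the $L^2(I)\to H^q(\R)$ continuity: the paper argues via smoothness of the kernel $(\eta(x)-\chi(x))\1_{\{x\le E\}}$ together with Cauchy--Schwarz, while you apply the same rank-one representation to general $f$, which is slightly more direct.
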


\begin{proof}
The kernel
\[
K(x,E)=(\eta(x)-\chi(x))\mathbf 1_{\{x\le E\}}
\]
is smooth on $\mathbb R\times I$. Indeed, $\operatorname{supp}(\eta-\chi)$
is compact and disjoint from $I$. On the part of
$\operatorname{supp}(\eta-\chi)$ lying to the left of $I$, one has
$\mathbf 1_{\{x\le E\}}=1$ for all $E\in I$, while on the part lying to the
right of $I$, one has $\mathbf 1_{\{x\le E\}}=0$ for all $E\in I$.

Thus $K$ is smooth, compactly supported in $x$, and all its $x$-derivatives
are bounded uniformly for $E\in I$. Therefore, by the Cauchy--Schwarz inequality in $E$
and differentiation under the integral, the associated operator maps
$L^2(I)$ continuously into $H^q(\mathbb R)$ for every $q\ge0$.

For the basis estimate, observe that
\[
G_{e_k}(x)=\int_I e_k(E)\,dE
\]
on the part of $\operatorname{supp}(\eta-\chi)$ lying to the left of $I$,
while $G_{e_k}(x)=0$ on the part lying to the right of $I$. Hence
\[
(\eta-\chi)G_{e_k}
=
\left(\int_I e_k(E)\,dE\right)(\eta-\chi)\mathbf 1_{\{x<a\}}.
\]
The function $(\eta-\chi)\mathbf 1_{\{x<a\}}$ is smooth and compactly
supported, because $\eta-\chi$ vanishes on a neighborhood of $I$. Therefore
\[
\|(\eta-\chi)G_{e_k}\|_{H^q(\mathbb R)}
\le
C_q\left|\int_I e_k(E)\,dE\right|.
\]
Since
\[
\int_I e_k(E)\,dE
=
\sqrt{\frac{2}{L}}\frac{L}{\pi k}\bigl(1-(-1)^k\bigr),
\]
we get
\[
\|(\eta-\chi)G_{e_k}\|_{H^q(\mathbb R)}
\le C_q k^{-1}
\le C_q(1+\mu_k)^{-1/2}.
\]
\end{proof}

\begin{lemma}\label{lem:count-tail-basis}
For every $k\ge1$, let
\[
T_k^{\mathrm{cnt}}=(1-\eta)G_{e_k}.
\]
Then
\begin{equation}\label{eq:count-tail-var}
\sup_{N\ge1}\Var\bigl(\mathcal N_N(T_k^{\mathrm{cnt}})\bigr)\le Ck^{-2}.
\end{equation}
\end{lemma}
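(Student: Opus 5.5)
The argument will follow Lemma~\ref{lem:log-tail-basis}, and is simpler here because $G_{e_k}$ is bounded rather than logarithmically growing. First I will compute $T_k^{\mathrm{cnt}}$ explicitly. Since $\eta\equiv1$ on $[-3,3]$, the function $T_k^{\mathrm{cnt}}$ vanishes on $[-3,3]$. For $x>3>b$ we have $G_{e_k}(x)=0$. For $x<-3<a$ we have
\[
G_{e_k}(x)=\int_I e_k(E)\,dE=\sqrt{\tfrac{2}{L}}\,\tfrac{L}{\pi k}\bigl(1-(-1)^k\bigr),
\]
as already computed in the proof of Lemma~\ref{lem:count-remainder-smoothing}. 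Hence
\[
T_k^{\mathrm{cnt}}(x)=\Bigl(\int_I e_k\Bigr)(1-\eta(x))\1_{\{x<-3\}},
\qquad\text{so}\qquad \|T_k^{\mathrm{cnt}}\|_\infty\le C k^{-1},
\]
with $C$ depending only on $L$ and $\eta$.

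Next I will bound the trace. Since $T_k^{\mathrm{cnt}}$ vanishes on $[-3,3]$,
\[
|\Tr T_k^{\mathrm{cnt}}(H_N)|\le N\|T_k^{\mathrm{cnt}}\|_\infty\,\1_{\{\|H_N\|>3\}}\le CNk^{-1}\1_{\{\|H_N\|>3\}}.
\]
Then I will use $\Var(\mathcal N_N(T_k^{\mathrm{cnt}}))\le \E|\Tr T_k^{\mathrm{cnt}}(H_N)|^2$, which holds because the variance is at most the second moment. Combining with \eqref{eq:norm-tail} gives
\[
\Var\bigl(\mathcal N_N(T_k^{\mathrm{cnt}})\bigr)\le C k^{-2}N^2\,\Pbb(\|H_N\|>3)\le Ck^{-2}N^2e^{-c(\log N)^2}.
\]

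The last step is uniformity in $N$. Since $\sup_{N\ge1}N^2e^{-c(\log N)^2}<\infty$, the bound above is $\le Ck^{-2}$ for all $N\ge1$, which is \eqref{eq:count-tail-var}. There is no real obstacle. The only point needing care is that the $k$-dependence enters solely through the constant $\int_I e_k$. This gives the $k^{-1}$ decay of the sup norm with no integration by parts, and unlike the log case no moment bound \eqref{eq:norm-moment} on $\|H_N\|$ is needed.
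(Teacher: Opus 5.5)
Your proposal is correct and follows the paper's proof essentially step for step. You obtain $\|T_k^{\mathrm{cnt}}\|_\infty\le Ck^{-1}$ from $\int_I e_k=\cO(k^{-1})$, bound the trace on the event $\{\|H_N\|>3\}$, and absorb the $N^2$ factor via \eqref{eq:norm-tail}. The paper instead passes through $\|\mathcal N_N(T_k^{\mathrm{cnt}})\|_2\le 2\|\Tr T_k^{\mathrm{cnt}}(H_N)\|_2$; your direct bound $\Var\le \E|\Tr T_k^{\mathrm{cnt}}(H_N)|^2$ is slightly sharper and equally valid.
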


\begin{proof}
Since $G_{e_k}$ is zero to the right of $I$ and equals $\int_I e_k(E)\,dE=\cO(k^{-1})$ to the left of $I$, we have
\[
\|T_k^{\mathrm{cnt}}\|_\infty\le Ck^{-1}.
\]
Therefore
\[
|\Tr T_k^{\mathrm{cnt}}(H_N)|
\le
CNk^{-1}\1_{\{\|H_N\|>3\}}.
\]
Moreover,
\[
\|\mathcal N_N(T_k^{\mathrm{cnt}})\|_2
\le
2\|\Tr T_k^{\mathrm{cnt}}(H_N)\|_2 .
\]
Hence
\[
\Var\bigl(\mathcal N_N(T_k^{\mathrm{cnt}})\bigr)
\le
\E |\mathcal N_N(T_k^{\mathrm{cnt}})|^2
\le
C k^{-2} N^2 \Pbb(\|H_N\|>3)
\le
C k^{-2}
\]
by Proposition \ref{prop:norm-tail}. This proves \eqref{eq:count-tail-var}.
\end{proof}

\begin{lemma}\label{lem:count-coeff-variance}
Fix $0<\varepsilon<1/2$. There exists $C_{\varepsilon}>0$ such that for all
$N,k\ge1$,
\[
\Var\bigl(\ip{X_N^{\mathrm{cnt}}}{e_k}\bigr)
\le
C_{\varepsilon} k^{-1+2\varepsilon}
+
C_{\varepsilon}k^{-2}.
\]
\end{lemma}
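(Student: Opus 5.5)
The proof follows the proof of Lemma~\ref{lem:log-coeff-variance} line by line. By \eqref{eq:count-linear-stat}, $\ip{X_N^{\mathrm{cnt}}}{e_k}=\mathcal N_N(G_{e_k})$. We split the test function as
\[
G_{e_k}=\chi G_{e_k}+(\eta-\chi)G_{e_k}+T_k^{\mathrm{cnt}} .
\]
Using $\Var(X+Y+Z)\le 3(\Var X+\Var Y+\Var Z)$, it then suffices to bound the variance of each of the three linear statistics separately, uniformly in $N$.

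\emph{Bulk piece.} The function $\chi G_{e_k}$ is supported in $\supp\chi\subset J\Subset(-2,2)$, so it is supported in $(-2+\kappa,2-\kappa)$ for some fixed $\kappa>0$. By \eqref{eq:count-Hhalf} it lies in $H^{1/2+\varepsilon}(\R)$. Theorem~\ref{thm:LS} therefore applies, and combined with \eqref{eq:count-Hhalf} it gives
\[
\Var(\mathcal N_N(\chi G_{e_k}))\le C_{\varepsilon}\|\chi G_{e_k}\|_{H^{1/2+\varepsilon}(\R)}^2\le C_\varepsilon k^{-1+2\varepsilon}.
\]
This is the dominant term.

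\emph{Smooth remainder.} Theorem~\ref{thm:SW} with $\delta=\varepsilon$, combined with \eqref{eq:count-remainder-basis} for $q=1+\varepsilon$, yields
\[
\Var(\mathcal N_N((\eta-\chi)G_{e_k}))\le C_\varepsilon(1+\mu_k)^{-1}\le C_\varepsilon k^{-2},
\]
using $\mu_k\asymp k^2$.

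\emph{Tail.} Lemma~\ref{lem:count-tail-basis} gives $\Var(\mathcal N_N(T_k^{\mathrm{cnt}}))\le Ck^{-2}$ uniformly in $N$.

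Summing the three bounds and enlarging $C_\varepsilon$ gives the claim.

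The only point needing care is that Theorem~\ref{thm:LS} requires a test function in $H^{1/2+\varepsilon}(\R)$ supported in a fixed bulk interval, with the constant independent of $k$. This holds because $\chi$ is fixed with support in $J$, and $\chi G_{e_k}$ has a jump-free derivative: the jumps of $G_{e_k}$ occur only outside $\supp\chi$, and inside $I$ the function $G_{e_k}$ is Lipschitz. Hence the $H^1$ bound \eqref{eq:count-H1}, and in turn the interpolated bound \eqref{eq:count-Hhalf}, hold uniformly in $k$. Everything else is a direct invocation of the earlier lemmas.
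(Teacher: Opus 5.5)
Your proof is correct and matches the paper's argument: the same three-piece split of $G_{e_k}$, with Theorem~\ref{thm:LS} plus \eqref{eq:count-Hhalf} for the bulk piece, Theorem~\ref{thm:SW} plus Lemma~\ref{lem:count-remainder-smoothing} for the smooth remainder, and Lemma~\ref{lem:count-tail-basis} for the tail. Your closing remark about jumps is slightly off, though it is not load-bearing: $G_{e_k}$ has no jumps at all (it is Lipschitz on all of $\R$, since $\partial_x G_{e_k}=-e_k\1_I$ is bounded), and the endpoints $a,b$ lie where $\chi\equiv1$, not outside $\supp\chi$; this Lipschitz property is what makes \eqref{eq:count-H1} uniform in $k$.
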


\begin{proof}
By \eqref{eq:count-linear-stat},
\[
\ip{X_N^{\mathrm{cnt}}}{e_k}
=
\mathcal N_N(\chi G_{e_k})
+
\mathcal N_N((\eta-\chi)G_{e_k})
+
\mathcal N_N(T_k^{\mathrm{cnt}}).
\]
Thus
\begin{align*}
\Var\bigl(\ip{X_N^{\mathrm{cnt}}}{e_k}\bigr)
&\le
3\Var(\mathcal N_N(\chi G_{e_k}))
+3\Var(\mathcal N_N((\eta-\chi)G_{e_k}))
+3\Var(\mathcal N_N(T_k^{\mathrm{cnt}})).
\end{align*}
Theorem \ref{thm:LS} and \eqref{eq:count-Hhalf} give
\[
\Var(\mathcal N_N(\chi G_{e_k}))
\le
C_\varepsilon \|\chi G_{e_k}\|_{H^{1/2+\varepsilon}(\R)}^2
\le
C_\varepsilon k^{-1+2\varepsilon}.
\]
Theorem \ref{thm:SW} and Lemma \ref{lem:count-remainder-smoothing} imply
\[
\Var(\mathcal N_N((\eta-\chi)G_{e_k}))
\le
C_{\varepsilon}\|(\eta-\chi)G_{e_k}\|_{H^{1+\varepsilon}(\mathbb R)}^2
\le
C_{\varepsilon}(1+\mu_k)^{-1}
\le
C_{\varepsilon}k^{-2}.
\]
The tail term is handled by Lemma \ref{lem:count-tail-basis}. Combining the
three estimates gives
\[
\Var\bigl(\ip{X_N^{\mathrm{cnt}}}{e_k}\bigr)
\le
C_{\varepsilon}k^{-1+2\varepsilon}
+
C_{\varepsilon}k^{-2}.
\]
This implies the conclusion after increasing the constant
$C_{\varepsilon}$.
\end{proof}

\begin{lemma}\label{lem:count-Hr}
For every $r>0$,
\[
\sup_{N\ge1}\E \|X_N^{\mathrm{cnt}}\|_{H^{-r}(I)}^2 < \infty.
\]
\end{lemma}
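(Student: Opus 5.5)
The plan mirrors the proof of Lemma~\ref{lem:log-Hr}. First, I expand the norm in the Dirichlet basis. By the definition of the $H^{-r}(I)$ norm and Tonelli's theorem,
\[
\E \|X_N^{\mathrm{cnt}}\|_{H^{-r}(I)}^2
=
\sum_{k\ge1}(1+\mu_k)^{-r}\,\E\bigl[\ip{X_N^{\mathrm{cnt}}}{e_k}^2\bigr].
\]
Because $\mathfrak n_N$ is centered pointwise, Fubini's theorem gives $\E\ip{X_N^{\mathrm{cnt}}}{e_k}=0$. This uses the fact that $\mathfrak n_N$ is bounded by $2N$ and so is integrable against $e_k$. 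The second moment therefore equals $\Var(\ip{X_N^{\mathrm{cnt}}}{e_k})$, as in \eqref{eq:count-linear-stat}.

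Next, I fix $\varepsilon\in(0,\min\{r,1/2\})$ and apply Lemma~\ref{lem:count-coeff-variance}. This bounds each variance by $C_\varepsilon k^{-1+2\varepsilon}+C_\varepsilon k^{-2}$, uniformly in $N$. Using $\mu_k=(\pi k/L)^2\asymp k^2$, the sum is dominated by
\[
C_\varepsilon\sum_{k\ge1}k^{-2r}\bigl(k^{-1+2\varepsilon}+k^{-2}\bigr).
\]
The first series converges because $2r+1-2\varepsilon>1$, which holds since $\varepsilon<r$. The second converges trivially. The resulting bound does not depend on $N$, which proves the lemma.

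There is no real obstacle here. All of the analytic work was done in Lemma~\ref{lem:count-coeff-variance}, which combined Lemmas~\ref{lem:count-bulk-norms}, \ref{lem:count-remainder-smoothing} and~\ref{lem:count-tail-basis} with Theorems~\ref{thm:LS} and~\ref{thm:SW}. The one point that needs care is the choice $\varepsilon<r$, which is needed to absorb the loss $k^{2\varepsilon}$ coming from the $H^{1/2+\varepsilon}$ variance bound. This is exactly why the argument reaches every $r>0$ but not $r=0$.
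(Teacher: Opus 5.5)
Your proof is correct and follows the paper's argument essentially verbatim. Like the paper, you expand the norm in the Dirichlet basis via Tonelli, apply Lemma~\ref{lem:count-coeff-variance} with $\varepsilon\in(0,\min\{r,1/2\})$, and sum using $\mu_k\asymp k^2$. Your explicit check that $\E\ip{X_N^{\mathrm{cnt}}}{e_k}=0$, so that the second moment equals the variance, is a step the paper leaves implicit.
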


\begin{proof}
As before,
\[
\E \|X_N^{\mathrm{cnt}}\|_{H^{-r}(I)}^2
=
\sum_{k\ge1}(1+\mu_k)^{-r}\Var\bigl(\ip{X_N^{\mathrm{cnt}}}{e_k}\bigr).
\]
Apply Lemma \ref{lem:count-coeff-variance} with some $\varepsilon\in(0,\min\{r,1/2\})$. Since $\mu_k\asymp k^2$,
\[
\sum_{k\ge1}(1+\mu_k)^{-r}k^{-1+2\varepsilon}
\asymp
\sum_{k\ge1}k^{-2r-1+2\varepsilon}<\infty.
\]
The two remaining series are summable, so the uniform bound follows.
\end{proof}

\subsection{Proof of Theorem \ref{thm:count-main}}

\begin{proof}[Proof of Theorem \ref{thm:count-main}]
Given the preceding lemmas, the proof is nearly identical to that of Theorem~\ref{thm:log-main}, so we omit the details. 
\end{proof}

\printbibliography

\end{document}